\def\rh#1{{\rm\bf H}_{\mathbb R}^{#1}}
\def\ch#1{{\rm\bf H}_{\mathbb C}^{#1}}
\def\pu#1{{\rm PU}(#1,1)}
\def\u#1{{\rm U}(#1,1)}
\def\isom#1{{\rm Isom}(#1)}
\def\Heis{{\mathcal H}}
\def\Siegel{{\mathcal S}}
\def\mod#1{\vert #1\vert}
\def\hherm#1{{\langle\!\langle #1 \rangle\!\rangle}}
\def\R{\mathbb R}
\def\C{\mathbb C}
\def\P{\mathbb P}
\def\herm#1{\langle #1\rangle}
\newcommand{\dps}{\displaystyle}
\def\qi{q_\infty}
\newenvironment{proof}{\noindent\normalsize {\sc Proof}:}{{\hfill \rule{1mm}{3mm}}}
\newtheorem{theorem}{Theorem}[section]
\newtheorem{co}{Corollary}[section]
\newtheorem{prop}{Proposition}[section]
\title{On the moduli space of quadruples of points in the boundary of complex hyperbolic space}
\author{Heleno Cunha and Nikolay Gusevskii~\thanks{Corresponding author. Supported by CNPq and FAPEMIG.}\\
{\small Departamento de Matem\'{a}tica}\\
{\small Universidade Federal de Minas Gerais}\\
{\small 30123-970  Belo Horizonte - MG - Brazil}}
\date{}
\begin{document}
\maketitle


\begin{abstract}
\noindent We consider the space $\mathcal M$ of ordered quadruples
of distinct points in the boundary of complex hyperbolic $n$-space,
$\ch{n},$ up to its holomorphic isometry group ${\rm PU}(n,1).$ One
of the important problems in complex hyperbolic geometry is to
construct and describe a moduli space for $\mathcal M$. For $n=2$,
this problem was considered by Falbel, Parker, and Platis. The main
purpose of this paper is to construct a moduli space for $\mathcal M
$ for any dimension $n \geq 1$. The major innovation in our paper is
the use of the Gram matrix instead of a standard position of points.
\end{abstract}

\quad {\sl MSC:} 32H20; 20H10; 22E40; 57S30; 32G07; 32C16

\quad {\sl Keywords:} Complex hyperbolic space; Cross-ratio;
Cartan's invariant; Gram matrix.

\section*{Introduction}

An important problem in complex hyperbolic geometry is to classify
ordered $m$-tuples of distinct points in complex hyperbolic
$n$-space, $\ch{n}$, or in its boundary,  $\partial\ch{n}$,  up to
congruence in the holomorphic isometry group  ${\rm PU}(n,1)$ of
$\ch{n}$. This problem is trivial for $m=1,2$, and it was completely
solved by Cartan and Brehm for triangles. If the vertices of a
triangle are in the boundary of complex hyperbolic space, then
Cartan's angular invariant $\mathbb A$ associated to this triangle
describes its congruence class in ${\rm PU}(n,1)$, Cartan \cite{C},
see also Goldman \cite{Go}. So, we can describe the moduli space of
triangles in the boundary of complex hyperbolic space as the closed
interval of real numbers $[-\pi/2, \pi/2]$. If the vertices of a
triangle are in $\ch{n}$, then its congruence class in ${\rm
PU}(n,1)$ is described by the side lengths and Brehm's shape
invariant, see Brehm \cite{Br}. If $m>3$, the problem becomes more
difficult. It was shown by Brehm and Et-Taoui \cite{BrEt} that for
points in $\ch{n}$, the congruence class of an $m$-tuple in ${\rm
PU}(n,1)$, $m>3$, is determined by the congruence classes of all of
its triangles. Easy examples show that this is not true for
$m$-tuples in the boundary of complex hyperbolic space. This implies
that Cartan's invariants of all of the triangles of an ordered
$m$-tuple in the boundary of complex hyperbolic space in the case
$m>3$ and $n>1$ do not determine its congruence class in ${\rm
PU}(n,1)$, and, therefore, we need something else to construct a
moduli space of such configurations. For instance, one can try to
use some invariants of four points.

\vspace{2mm}

For $n=2$ and $m=4$, this problem was considered by Falbel \cite{F},
see also Falbel-Platis \cite{FP}, and Parker-Platis \cite{PP1},
\cite{PP2}. For the convenience of the reader, we describe briefly
their results. To do it, we recall the definition of the
Kor\'{a}nyi-Reimann complex cross-ratio.

In \cite{KR}, the classical cross-ratio was generalized by
Kor\'{a}nyi and Reimann to complex hyperbolic space, see also
Goldman \cite{Go}. Let $p=(p_1,p_2,p_3,p_4)$ be an ordered quadruple
of distinct points of $\partial\ch{n}$. Following Goldman \cite{Go},
we define the Kor\'{a}nyi-Reimann complex  cross-ratio, or simply
the complex cross-ratio, of $p$, as follows

\begin{equation*}
X(p)=X(p_1,p_2,p_3,p_4)= \dps \frac{\herm{P_3,P_1}\herm{P_4,P_2}}{\herm{P_4,P_1}\herm{P_3,P_2}},
\end{equation*}
\noindent where $\herm{X,Y}$ is the Hermitian product in
$\mathbb{C}^{n+1}$ of signature $(n,1)$, and $P_i \in
\mathbb{C}^{n+1}$ is a null lift of $p_i$, see \cite{Go}. This
number is independent of the chosen lifts $P_i$. Moreover, $X(p)$ is
invariant with respect to the diagonal action of ${\rm PU}(n,1)$.
Basic properties of the complex cross-ratio may be found in Goldman
\cite{Go}.

\vspace{2mm}

Let $\mathcal M $ be the configuration space of ordered quadruples
of distinct points in the boundary of the complex hyperbolic plane,
that is, the quotient of the set of ordered quadruples of distinct
points of $\partial\ch{2}$  with respect to the diagonal action of
${\rm PU}(2,1)$ equipped with the quotient topology.

\vspace{2mm}

Falbel's cross-ratio variety $\mathcal X$ is the subset of
$\mathbb{C}^3$ defined by the following equations:

\begin{itemize}
\item $|\omega_0| |\omega_1| |\omega_2|-1=0,$
\item $|\omega_0 -1|^2 -1 + |\omega_0|^2(|\omega_1-1|^2-1)+
|\omega_0|^2 |\omega_1|^2(|\omega _2-1|^2-1)=0,$
\end{itemize}
see \cite{F}, \cite{FP}. It is easy to see that $\mathcal X$ is a real algebraic
variety of dimension four in $\mathbb{C}^3$.

\vspace{2mm}

Let $m(p)\in \mathcal M $ be the point represented by
$p=(p_1,p_2,p_3,p_4)$. Falbel \cite{F}, \cite{FP} defines the map
$\pi:\mathcal M\longrightarrow \mathcal X$ by the following formula
\begin{equation*}
\pi:m(p)\rightarrow (\omega_0, \omega_1, \omega_2)=
\bigr(X(p_1,p_2,p_3,p_4),\
X(p_1,p_4,p_2,p_3),X(p_1,p_3,p_4,p_2)\bigl).
\end{equation*}
\noindent The main result of \cite{F} is that this map is a
bijection, see Proposition 2.4 there. In other words, this means
that the ${\rm PU}(2,1)$-congruence class of ordered quadruples of
distinct points in the boundary of the complex hyperbolic plane is
completely determined by these three complex cross-ratios satisfying
these two real equations, that is, Falbel's cross-ratio variety
could serve as a moduli space for the configuration space $\mathcal
M$.

\vspace{2mm}

In \cite{PP1}, Parker and Platis used a slightly different
construction from that of Falbel to describe $\mathcal M$. They
define the map $\pi': \mathcal M \longrightarrow  \mathbb  C^{3}$ by
the following formula
\begin{equation*}
\pi':m(p)\rightarrow \bigl (X_1=X(p_1,p_2,p_3,p_4), X_2= X(p_1,p_3,p_2,p_4), X_3=X(p_2,p_3,p_1,p_4)\bigr).
\end{equation*}
They proved that $X_1, X_2, X_3$ satisfy the following relations

\begin{itemize}
\item $| X_2| =| X_1| | X_3|,$
\item $2| X_1| ^2\rm{Re}(X_3)=| X_1| ^2+| X_2| ^2+1-2\textrm{Re}(X_1+X_2).$
\end{itemize}
The Parker-Platis cross-ratio variety $\mathcal X'$ is the subset of
$\mathbb{C}^3$, where these relations are satisfied. The main result
of \cite{PP1} is that the map $\pi': \mathcal M \longrightarrow
\mathcal X'$ defined above is a bijection, see Proposition 5.5 and
Proposition 5.10 in \cite{PP1}. Again, this implies that the
Parker-Platis cross-ratio variety could be considered as a moduli
space for the configuration space $\mathcal M$. It is easy to see
that $\mathcal X$ is homeomorphic to $\mathcal X'$, \cite{PP1},
\cite{PP2}.

\vspace{2mm}

These results of Falbel and Parker-Platis have been used in a series
of recent papers, see, \cite{FP}, \cite{PP1}, \cite{W}, and others.
For instance, in \cite{PP1}, Parker and Platis used points of
$\mathcal X'$ in their generalization of Fenchel-Nielsen coordinates
to the complex hyperbolic setting, see also Will, \cite{W} for
related topics. Falbel and Platis \cite{FP} used Falbel's
cross-ratio variety to describe some geometric properties of the
configuration space $\mathcal M$.

\vspace{2mm}

In the present article, we show that, unfortunately, these results
of Falbel and Parker-Platis on the description of the configuration
space $\mathcal M,$ in spite of their important contribution to the
solution to this problem, are not correct.

The reason is that the maps $\pi:\mathcal M \longrightarrow \mathcal
X$ and $\pi':\mathcal M\longrightarrow \mathcal X'$ are not
injective. The reader can find explicit examples that show this for
the Parker-Platis variety in Section 3.3 of the paper. In fact, we
show that there exists a $1$-parameter family of pairs of distinct
points in $\mathcal M$, $m(t)$ and $m^{*}(t)$, corresponding to
$\mathbb{C}$-plane quadruples such that $\pi' (m(t))=\pi'
(m^{*}(t))$ (a quadruple of points in $\partial\ch{n}$ is
$\mathbb{C}$-plane if all its points lie in the boundary of a
complex geodesic, or equivalently, in a $\mathbb{C}$-circle).  We
will also prove that the above examples describe all the points of
$\mathcal M,$ where the map $\pi':\mathcal M \longrightarrow
\mathcal X'$ is not injective. Of course, the same examples serve
for Falbel's variety.

\vspace{2mm}

All this implies that both Falbel's and the Parker-Platis
cross-ratio varieties cannot serve as moduli spaces for the
configuration space $\mathcal M$.

\vspace{2mm}

We now state our results. The main purpose of our paper is to
present a correct description of the configuration space $\mathcal
M$. Also, we construct a moduli space of ordered quadruples of
distinct points in the boundary of complex hyperbolic space $\ch{n}$
for any dimension $n \geq 1$. The major innovation in our paper is
the use of the Gram matrix instead of a standard position of points
used in \cite{F}, \cite{FP}, \cite{PP1}.

\vspace{2mm}

Let $p=(p_1,p_2,p_3,p_4)$ be an ordered quadruple of distinct points
of $\partial\ch{2}$, and let $m(p)\in \mathcal M $ be the point
represented by $p=(p_1,p_2,p_3,p_4)$.  We define the map
$$\tau : \mathcal M \longrightarrow  \mathbb C^{2} \times \mathbb R$$
by the following formula:
\begin{equation*}
\dps \tau :m(p)\mapsto  \bigl (X_1=X(p_1,p_2,p_3,p_4),\ X_2= X(p_1,p_3,p_2,p_4),\
\mathbb A=
\mathbb{A} (p_1,p_2,p_3)\bigr),
\end{equation*}
where  $\mathbb A= \mathbb{A} (p_1,p_2,p_3)$ is the Cartan invariant of the
triple $(p_1,p_2,p_3)$.

\vspace{2mm}

\noindent {\bf Theorem A} \ {\sl Let $X_1, X_2, \mathbb A$ be the
numbers defined by $\tau$. Then they satisfy the following relation:
\begin{equation*}
\dps -2{\rm Re}(X_1+X_2) -2{\rm Re} (X_1\ \overline{X}_2\ e^{-
i2\mathbb A})+ \mod {X_1}^2 +\mod {X_2}^2 +1=0.
\end{equation*}}

Our main result is the following theorem.

\vspace{2mm}

\noindent {\bf Theorem B} \ {\sl The configuration space $\mathcal M
$ is homeomorphic to the set of points  $X=(X_1, X_2, A)$ in
$\mathbb C_{*}^{2} \times \mathbb R$ defined by
\begin{equation*}
\dps -2{\rm Re}(X_1+X_2) -2{\rm Re} (X_1\ \overline{X}_2\ e^{- i2
A})+ \mod {X_1}^2 +\mod {X_2}^2 +1=0,
\end{equation*}
subject to the following restrictions:

 $$\dps -\pi /2 \leq A \leq \pi /2,\ \ \ {\rm Re} (X_1 \ e^{-i A})\geq
 0,$$} where $\mathbb {C}_{*}=\mathbb {C}\setminus \{0\}.$

\vspace{2mm}

We denote this set by $\mathbb M$ and call $\mathbb M$ the moduli
space for $\mathcal M$. The map $ \tau : \mathcal M \longrightarrow
\mathbb M$ is a homeomorphism provided that $\mathbb M$ is equipped
with the topology induced from $\mathbb C^{2} \times \mathbb R$.

\vspace{2mm}

Finally, we get a generalization of Theorem B for any dimension. Let
$\mathcal M (n,4)$ be the configuration space of ordered quadruples
of distinct points in the boundary of complex hyperbolic space of
dimension $n\geq 2.$

\vspace{2mm}

\noindent {\bf Theorem C} \ {\sl $\mathcal {M}(n,4)$  is
homeomorphic to the set of points $X=(X_1, X_2, A) \in \mathbb
C_{*}^{2} \times \mathbb R$ defined by
\begin{equation*}
\dps -2{\rm Re}(X_1+X_2) -2{\rm Re} (X_1\ \overline{X}_2\ e^{-\ i2
A})+ \mod {X_1}^2 +\mod {X_2}^2 +1\leq 0,
\end{equation*}
$$\dps -\pi /2 \leq  A \leq \pi /2,\ \ \ {\rm Re} (X_1 \ e^{-iA})\geq
0.$$ The equality in the first inequality happens if and only if the
quadruples are in the boundary of a complex hyperbolic $2$-space.}

\vspace{2mm}

It should be noticed here that the problem of the construction of
the moduli space for the configuration space of ordered $m$-tuples
of distinct points in $\partial\ch{n}$, in the case $m=n+1$, was
considered by Hakim-Sandler, see \cite{HS}. The invariants they used
are too complicate to be described here.

The paper is organized as follows. In Section 1, we review some
basic facts in complex hyperbolic geometry. In Section 2, we obtain
the principal formulae we need. Section 3.1 is devoted to the
construction of the moduli space for the configuration space of
ordered quadruples of  points in the boundary of the complex
hyperbolic plane. In Section 3.2, we describe some interesting
subsets of the moduli space and describe its topology. In Section
3.3, we discuss a relation between the configuration space and the
Falbel-Parker-Platis cross-ratio varieties. Finally, in Section 3.4,
we describe the moduli space for the configuration space of ordered
quadruples of points in the boundary of complex hyperbolic space of
any dimension.

\section{Complex hyperbolic space and its boundary}

Let $V^{n,1}$ be a $(n+1)$-dimensional $\mathbb C$-vector space
equipped with a Hermitian form $\herm{-,-}$ of signature of $(n,1)$.
The projective model of {\sl complex hyperbolic space} $\ch{n}$ is
the set of negative points in the projective space $\P\C^{n}.$ It is
well known that $\ch{n}$ can be identified with the unit open ball
in $\mathbb C^n.$ We will consider $\ch{n}$ equipped with the
Bergman metric, see \cite{Go}. Then $\ch{n}$ is a complete K\"ahler
manifold of constant holomorphic sectional curvature $-1$. The
boundary of $\ch{n},$ denoted by $\partial {\ch{n}}$, is the
$(2n-1)$-sphere formed by all isotropic points. Let $\u{n}$ be the
unitary group corresponding to this Hermitian form. The holomorphic
isometry group of $\ch{n}$ is the projective unitary group $\pu{n}$,
and the full isometry group $\isom{\ch{n}}$ is generated by $\pu{n}$
and complex conjugation.

For the purposes of our paper it is convenient to work with
coordinates in $V^{n,1}$ in which the Hermitian product is
represented by:
$$\herm{Z,W}\ =\ z_1\overline{w}_{n+1}\ +z_2\overline{w}_2+\ \cdots \
+\ z_n\overline{w}_n\ + \ z_{n+1}\overline{w}_1\ , $$
where
$$Z\ =\ \begin{bmatrix} z_1 \\ \vdots \\ z_n \\ z_{n+1} \end{bmatrix}
\ \ \ \ \ {\rm and}\ \ \ \ \ W\ =\ \begin{bmatrix} w_1 \\ \vdots \\
w_n \\ w_{n+1} \end{bmatrix}\ . $$

These coordinates  were used by Burns-Shnider \cite{BS}, Epstein
\cite{E} (the "second Hermitian form"), Parker-Platis \cite{PP1},
and also in \cite{GuP1}, \cite{GuP2}, \cite{DG}, among others. In
what follows, we denote by $\mathbb {C}^{n,1}$ the Hermitian vector
space $V^{n,1}$ equipped with these coordinates. In these
coordinates, the boundary of complex hyperbolic space is identified
with the one point compactification of the boundary of the Siegel
domain $\Siegel^{n}$. Following Goldman-Parker \cite{GoP}, we give
the Siegel domain {\sl horospherical coordinates.} Horospherical
coordinates are defined as follows: The height $u \in \mathbb
{R}_{+}$ of a point $q=\mathbb {P}(Z)$ in $\Siegel^{n}$ is defined
by $u=-\herm{Z,Z}/2.$ The locus of points where the height is
constant is called a {\sl horosphere} and naturally carries the
structure of the Heisenberg group $\Heis=\{(z,t):z \in
\mathbb{C}^{n-1}, t \in \mathbb R\}=\mathbb{C}^{n-1}\times \mathbb
R.$ The horospherical coordinates of a point in the Siegel domain
are just $(z,t,u) \in \mathcal H \times \mathbb{R}_{+}.$ We define
the relation between horospherical coordinates on the Siegel domain
and vectors in $\mathbb {C}^{n,1}$ by the following map:

\begin{equation*}  \label{horo1}
\psi:(z,t,u)\, \longmapsto\, \dps\begin{bmatrix}
\dps -\hherm{z,z} - u + it  \\
\dps z\sqrt{2}\\\ \dps 1 \end{bmatrix} \quad\hbox{ for } (z,t,u)\in
\overline{\Siegel}-\{q_\infty\};\qquad\psi:q_\infty\longmapsto
\begin{bmatrix} 1 \\ 0 \\ \vdots \\ 0 \end{bmatrix},
\end{equation*}

\noindent where $\hherm{z,z'}$ is the standard Hermitian product on
$\C^{n-1},$ and $q_\infty =\infty$ is a distinguished point in the
boundary of $\ch{n}$. We call the vectors $\psi(z,t,u)$ and $\psi
(q_{\infty})$ the {\sl standard lifts }of $(z,t,u)$ and
$q_{\infty}$.

This shows that the boundary of $\ch{n},$ \ $\partial {\ch{n}}$, \
may be thought of as the one point compactification of the
horosphere of height $u=0$.

\medskip
There are two types of totally geodesic submanifolds of $\ch{n}$ of
real dimension two:
\begin{itemize}
\item {\sl Complex geodesics} (copies of $\ch{1}$) have constant
sectional curvature $-1$.
\item {\sl Totally real geodesic 2-planes} (copies of $\rh{2}$)
have constant sectional curvature $-1/4$.
\end{itemize}
\medskip

The intersection of a complex geodesic $L$ with $\partial\ch{n}$ is
called a {\sl chain} $C = \partial L$. Chains passing through $\qi$
are called {\sl vertical} or {\sl infinite}. A chain which does not
contain $\qi$ is called {\sl finite}. If $L$ is a complex geodesic,
then there is a unique inversion in $\pu{n}$ whose fixed-point set
is $L$. It acts on $\partial\ch{n}$ fixing $C = \partial L$.

\medskip
The intersection of a totally real geodesic 2-subspace with
$\partial\ch{n}$ is called an $\R$-circle.  Just as for chains, an
$\R$-circle in $\Heis$ is one of two types, depending on whether or
not it passes through $\qi$. An $\R$-circle is called {\sl infinite}
if it contains $q_\infty$, otherwise, it is called {\sl finite}. If
$R$ is a totally real geodesic 2-subspace, then there is a unique
inversion (anti-holomorphic automorphism of $\ch{n}$) whose
fixed-point set is $R$. It acts on $\partial\ch{n}$ fixing $
\partial R$.
\bigskip

\section{Gram matrix, Cartan's invariant, Cross-ratio}

\subsection{The Gram matrix}

Let $p=(p_1, \cdots ,p_m)$ be an ordered $m$-tuple of distinct
points in $\partial {\ch{n}}.$  Then we consider the Hermitian
$m\times m$-matrix
$$G= \ G(p) \ = \ (g_{ij}) \ = \ (\herm{P_i,P_j}),$$
where $P_i$ is a lift of $p_i$. We call $G$ a {\sl Gram matrix}
associated to the $m$-tuple $p$. Of course, $G$ depends on the
chosen lifts $P_i$. When replacing $P_i$ by $\lambda_i P_i$, where
$\lambda_i \in \mathbb {C}_{*}=\mathbb {C}\setminus \{0\},$ we get
$\tilde{G}=D^{*}\ G D,$ where $D$ is the diagonal matrix,
$D=(\lambda_i \ \delta _{ij})$, with $\delta _{ij}=0$, if $i\neq j$,
and $\delta _{ij}=1$, if $i=j.$

We say that two  Hermitian $m \times m$ - matrices $H$ and
$\tilde{H}$ are {\sl equivalent} if there exists a diagonal matrix
$D$, $D=(\lambda_i \ \delta _{ij})$, $\lambda_i \in \mathbb
{C}_{*}$, such that $\tilde{H}=D^{*} \ H \ D.$

Thus, to each $m$-tuple $p$ of points in $
\partial {\ch{n}}$ is associated an equivalence class of Hermitian
$m\times m$ - matrices with $0's$ on the diagonal. We remark that
for any two Gram matrices $G$ and $\tilde{G}$  associated to an
$m$-tuple $p$ the equality $\det \tilde{G} = \lambda \det G$ holds,
where $\lambda
> 0$. This implies that the sign of $\det G$ does not depend on the
chosen lifts $P_i$.

\vspace{2mm}

Now we consider the case $m=4$.

\begin{prop}
Let $p=(p_1,p_2,p_3,p_4)$ be an ordered quadruple of distinct points
of $\partial\ch{n}$. Then the equivalence class of Gram matrices
associated to $p$ contains a unique matrix $G=(g_{ij})$ with
$g_{ii}=0$, $g_{12}=g_{23}=g_{34}=1$, $\mod {g_{13}}=1$.
\end{prop}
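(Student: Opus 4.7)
The plan is to use the scaling freedom $P_i\mapsto \lambda_i P_i$, under which off-diagonal entries transform as $g_{ij}\mapsto \overline{\lambda_i}\lambda_j g_{ij}$, while the zero diagonal is preserved automatically. The key preliminary observation is that every off-diagonal entry $g_{ij}$ is nonzero: if two null vectors are orthogonal with respect to $\herm{\cdot,\cdot}$, then the orthogonal complement of a null vector in signature $(n,1)$ has radical the vector itself and a positive definite transverse direction, forcing the second vector to be proportional to the first and contradicting distinctness of $p_i,p_j$. Hence every $g_{ij}$ with $i\neq j$ is nonzero, so division by these quantities is legitimate throughout.

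For existence, I would solve the three complex equations $g_{12}=g_{23}=g_{34}=1$ recursively: keeping $\lambda_1\in\C^*$ as a free parameter, set $\lambda_2=1/(\overline{\lambda_1}g_{12})$, then $\lambda_3=1/(\overline{\lambda_2}g_{23})$, then $\lambda_4=1/(\overline{\lambda_3}g_{34})$. A short substitution eliminates $\lambda_2$ to give $\lambda_3=\overline{g_{12}}\lambda_1/g_{23}$, so the new $(1,3)$-entry becomes $\overline{\lambda_1}\lambda_3 g_{13}=\mod{\lambda_1}^2\,\overline{g_{12}}g_{13}/g_{23}$. Imposing $\mod{g_{13}}=1$ then pins down $\mod{\lambda_1}^2=\mod{g_{23}}/(\mod{g_{12}}\mod{g_{13}})$, which is a positive real number. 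Picking any $\lambda_1$ of that modulus produces a normalized representative.

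For uniqueness, I would observe that once $\mod{\lambda_1}$ is fixed by the condition $\mod{g_{13}}=1$, the only remaining freedom is $\arg\lambda_1\in\R/2\pi\Z$. Tracking this through the recursion, the replacement $\lambda_1\mapsto e^{i\theta}\lambda_1$ forces $\lambda_j\mapsto e^{i\theta}\lambda_j$ for $j=2,3,4$ as well, so the diagonal matrix $D$ changes only by the global scalar $e^{i\theta}I$. Since $(e^{i\theta}I)^*G(e^{i\theta}I)=G$, the resulting Gram matrix is unchanged, and any two normalized representatives in the equivalence class must coincide.

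I do not anticipate any real obstacle: the proof is essentially a dimension count of parameters ($8$ real, from four complex $\lambda_i$) against constraints ($6+1=7$ real), with the one residual real degree of freedom identified explicitly as a global phase acting trivially on $G$. The only care needed is to confirm all denominators remain nonzero at each step of the recurrence, which is precisely what the preliminary remark about null vectors guarantees.
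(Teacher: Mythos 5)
Your proof is correct and follows essentially the same route as the paper's: observe that all off-diagonal Gram entries are nonzero because distinct null vectors cannot be orthogonal in signature $(n,1)$, then rescale the lifts to achieve $g_{12}=g_{23}=g_{34}=1$ and $\mod{g_{13}}=1$. Your version is actually more complete, since you make explicit the uniqueness argument (the residual freedom is a global phase $\lambda_i\mapsto e^{i\theta}\lambda_i$, which acts trivially on $D^{*}GD$), a point the paper's proof leaves implicit.
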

\begin{proof}
Since $p_i$ are null points, we have that $g_{ij}\neq 0$ for $i\neq
j.$ Then it is easy to see that by appropriate re-scaling we may
assume that
$$g_{12}=g_{23}=g_{34}=1.$$
Let $a=1/\sqrt{\mod{g_{13}}}$. Taking  $P'_1 =a P_1,$ $P'_2 =(1/a)
P_2,$ $P'_3 =a P_3,$  $P'_4 =(1/a) P_4$, we get the result.
\end{proof}

\vspace{2mm} We call such a matrix $G$ a {\sl normal form} of the
associated Gram matrix. Also, we call $G$ the {\sl normalized} Gram
matrix.

\vspace{2mm}

\noindent {\bf Remark} The normalization of the Gram matrices is a
common trick in complex hyperbolic geometry, see, for instance,
Brehm \cite{Br}, Brehm and Et-Taoui \cite{BrEt}, Goldman \cite{Go},
Grossi \cite{Gr}. We use the normalization considered in \cite{Gr}.

\vspace{2mm}

\begin{prop} Let $p=(p_1,p_2,p_3,p_4)$ and $p'=(p'_1,p'_2,p'_3,p'_4)$
be two ordered quadruple of distinct points of $\partial\ch{n}$.
Then $p$ and $p'$ are congruent in ${\rm PU}(n,1)$ if and only if
their associated Gram matrices are equivalent.
\end{prop}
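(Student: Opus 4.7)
The plan is to treat the two directions of the equivalence separately. The forward implication is a direct calculation: if $T \in \pu{n}$ sends $p_i$ to $p'_i$, lift $T$ to $\tilde T \in \u{n}$; since $\tilde T P_i$ is then a null lift of $p'_i$, we may write $\tilde T P_i = \mu_i P'_i$ for some $\mu_i \in \C_*$. Unitarity of $\tilde T$ immediately gives $g_{ij} = \mu_i \bar\mu_j g'_{ij}$, i.e.\ $G = D^{*} G' D$ with $D = \mathrm{diag}(\bar\mu_i)$, so $G$ and $G'$ are equivalent.

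For the converse I would first absorb the diagonal scalars of the equivalence into a rescaling of the lifts $P'_i$, so that without loss of generality $g_{ij} = g'_{ij}$ for every $i,j$. Setting $V = \mathrm{span}(P_1,\ldots,P_4)$ and $V' = \mathrm{span}(P'_1,\ldots,P'_4)$, I would define a linear map $\phi : V \to V'$ by $\phi(P_i) = P'_i$ extended linearly. Once $\phi$ is shown to be well defined, the equality of the Gram matrices makes it form-preserving by bilinear expansion, so the classical Witt extension theorem for non-degenerate Hermitian spaces extends $\phi$ to a global isometry of $\C^{n,1}$. This extension belongs to $\u{n}$ and its projectivisation in $\pu{n}$ sends each $p_i$ to $p'_i$, completing the argument.

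The principal obstacle is well-definedness of $\phi$: I must show that a relation $\sum c_i P_i = 0$ forces $\sum c_i P'_i = 0$. The route I would take is first to prove that $\herm{\cdot,\cdot}$ restricts non-degenerately to $V$ (and symmetrically to $V'$). Suppose $w \in V \cap V^{\perp}$ were nonzero; then $w$ is null and orthogonal to every $P_j$. Because the distinct null points force $\herm{P_1, P_j} = g_{1j} \neq 0$ for $j \neq 1$, the vector $w$ cannot be a scalar multiple of $P_1$, so $\mathrm{span}(w, P_1)$ is a two-dimensional totally isotropic subspace of $\C^{n,1}$. This contradicts the signature $(n,1)$, which admits no isotropic subspace of complex dimension exceeding one; hence $\herm{\cdot,\cdot}|_{V}$ is non-degenerate. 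Under this non-degeneracy the relation $\sum c_i P_i = 0$ is equivalent to the purely matrix condition $\sum c_i g_{ij} = 0$ for every $j$; the identical characterisation holds on the primed side. Since $G = G'$, the linear dependencies among the $P_i$ and among the $P'_i$ coincide, which is exactly the content of well-definedness.
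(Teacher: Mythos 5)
Your proof is correct and takes essentially the same route as the paper: both reduce to the case of equal Gram matrices, define the map $P_i \mapsto P'_i$ between the spans $V$ and $V'$, and invoke the Witt extension theorem to produce the required element of ${\rm PU}(n,1)$. The only difference is one of detail --- the paper dismisses the well-definedness and injectivity of this map with ``it is easy to see,'' whereas you supply the actual justification via non-degeneracy of the restricted form (ruling out a two-dimensional totally isotropic subspace), which is a welcome addition rather than a departure.
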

\begin{proof}
Let us assume that $p$ and $p'$ have equivalent associated Gram
matrices.  Let $P=(P_1,P_2,P_3,P_4)$ and $P'=(P'_1,P'_2,P'_3,P'_4)$
be the quadruples of vectors in $V^{n,1}$ which represent $p_i$ and
$p_{i}'$. Let $V$ and $V'$ be the subspaces spanned by $P$ and $P'$.
Since all the points $p_i$  are distinct and isotropic it is easy to
see that the linear map $V \longrightarrow V' \subseteq V^{n,1}$
defined by $P \longmapsto P'$ is an injective isometry. Then the
result follows by applying the Witt theorem \cite{Sch}.
\end{proof}

\vspace{2mm}

\noindent {\bf Remark} This proposition follows also from Lemma
2.2.2 in \cite{Gr}.

\vspace{2mm}

\begin{co} Let $p$ and $p'$ be two ordered quadruples of distinct points
of $\partial\ch{n}$, and let $G$ and $G'$ be their normalized Gram
matrices. Then $p$ and $p'$ are congruent in ${\rm PU}(n,1)$ if and
only if $G=G'$.
\end{co}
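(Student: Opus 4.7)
The plan is to deduce the corollary directly from the two preceding results: Proposition 2.1 (each equivalence class of Gram matrices associated to an ordered quadruple contains a unique normalized representative) and Proposition 2.2 (two quadruples are ${\rm PU}(n,1)$-congruent iff their associated Gram matrices are equivalent).

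First I would handle the easy direction. If $G=G'$, then $G$ and $G'$ are trivially equivalent (take the diagonal matrix $D$ to be the identity in the definition of equivalence). By Proposition 2.2, this equivalence of some pair of associated Gram matrices is exactly what is needed to conclude that $p$ and $p'$ are congruent in ${\rm PU}(n,1)$.

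For the other direction, suppose $p$ and $p'$ are ${\rm PU}(n,1)$-congruent. Then Proposition 2.2 yields that every Gram matrix associated to $p$ is equivalent to every Gram matrix associated to $p'$ (since any two Gram matrices for the same quadruple are equivalent by the scaling relation $\tilde{G}=D^*GD$, and the composition of two such scalings is again a scaling). In particular, the normalized Gram matrices $G$ and $G'$, which are Gram matrices associated to $p$ and $p'$ respectively, lie in the same equivalence class of Hermitian $4\times 4$ matrices under the diagonal action.

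The remaining step—and the only nontrivial point—is uniqueness: within one equivalence class there can be at most one matrix of the normalized form prescribed by Proposition 2.1, namely $g_{ii}=0$, $g_{12}=g_{23}=g_{34}=1$, $|g_{13}|=1$. This is exactly the uniqueness part of Proposition 2.1. Hence $G=G'$, completing the proof. I do not expect any serious obstacle here, as the whole content is to observe that "pass to the normalized Gram matrix" is a well-defined map from congruence classes of ordered quadruples into the set of Hermitian matrices, and Proposition 2.2 plus the uniqueness statement in Proposition 2.1 make this map injective.
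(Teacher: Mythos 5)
Your proof is correct and follows exactly the route the paper intends: the corollary is stated there without proof as an immediate consequence of Proposition 2.1 (uniqueness of the normalized representative in each equivalence class) and Proposition 2.2 (congruence is equivalent to equivalence of Gram matrices), which is precisely the argument you give.
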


\begin{co}
Let $p$ and $p'$ be two ordered quadruples of distinct points of
$\partial\ch{n}$. Then $p$ and $p'$ are congruent with respect to an
anti-holomorphic isometry of $\ch{n}$ if and only if their
normalized Gram matrices are conjugate.
\end{co}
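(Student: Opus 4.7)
The plan is to reduce this corollary to Corollary 2.1 (the holomorphic case) by using the fact, recalled in Section 1, that every anti-holomorphic isometry of $\ch{n}$ has the form $g\circ\sigma$, where $g\in\pu{n}$ and $\sigma$ denotes complex conjugation $Z\mapsto\overline Z$ on $\mathbb C^{n,1}$. The entire argument will therefore hinge on how the Gram matrix of a quadruple transforms under $\sigma$.

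The key preliminary computation is that $\herm{\overline Z,\overline W}=\overline{\herm{Z,W}}$, which follows at once from the explicit formula for $\herm{-,-}$ in our coordinates. Consequently, if $P_i$ are lifts of $p_i$ and we set $p^{*}=\sigma(p)$ with lifts $P_i^{*}=\overline{P_i}$, then $G(p^{*})=\overline{G(p)}$, the entrywise complex conjugate. I then check that entrywise conjugation preserves the normal form of Proposition 2.1: the equations $g_{12}=g_{23}=g_{34}=1$ are preserved because $\overline{1}=1$, and $\mod{g_{13}}=1$ is preserved because conjugation preserves modulus. Hence if $G$ is the normalized Gram matrix of $p$, then $\overline{G}$ is the normalized Gram matrix of $\sigma(p)$. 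One also notes that entrywise conjugation is compatible with the equivalence relation of Section 2.1, since $\overline{D^{*}GD}=\overline{D}^{*}\overline{G}\,\overline{D}$, so the notion "the normalized Gram matrices are conjugate" is well defined on equivalence classes.

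With these ingredients in hand, both implications are short. For the forward direction, suppose $p'=g\circ\sigma(p)$ with $g\in\pu{n}$; taking $P_i'=g\overline{P_i}$ as lifts of $p_i'$ and using the $\pu{n}$-invariance of $\herm{-,-}$, I get $\herm{P_i',P_j'}=\overline{\herm{P_i,P_j}}$, so the Gram matrix of $p'$ is equivalent to $\overline{G(p)}$, and hence the normalized Gram matrix of $p'$ is $\overline{G}$. Conversely, if the normalized Gram matrices satisfy $G'=\overline G$, then by the preliminary step $\sigma(p)$ and $p'$ have the same normalized Gram matrix, so Corollary 2.1 produces $g\in\pu{n}$ with $p'=g(\sigma(p))$, and $g\circ\sigma$ is the desired anti-holomorphic isometry carrying $p$ to $p'$.

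The only real obstacle I foresee is the bookkeeping around the normalization and the equivalence relation: one must verify that the normal form is stable under entrywise conjugation and that conjugation descends to the equivalence classes introduced in Section 2.1. Once these compatibilities are confirmed, the proof is a direct reduction to Corollary 2.1 via the factorization of anti-holomorphic isometries.
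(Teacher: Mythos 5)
Your proof is correct, and it follows exactly the route the paper intends: the paper states this as an immediate corollary (with no written proof) of Proposition 2.2/Corollary 2.1 together with the fact from Section~1 that the full isometry group is generated by $\pu{n}$ and complex conjugation, which is precisely your reduction via $\herm{\overline Z,\overline W}=\overline{\herm{Z,W}}$ and the stability of the normal form under entrywise conjugation. Nothing is missing.
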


\vspace{2mm}

By direct computation we obtain the following formulae.
\begin{prop}
Let $p=(p_1,p_2,p_3,p_4)$ be an ordered quadruple of distinct points
of $\partial\ch{n}$ and $G=(g_{ij})$ be the normalized Gram matrix
of $p$. Then
$$\det G= -2{\rm Re}({g}_{14})  -2{\rm Re}({g}_{13} \bar{{g}}_{24})-
2{\rm Re}({g}_{13} \bar{{g}}_{14} g_{24}) + \mod{{g}_{14}}^2 +
\mod{{g}_{24}}^2 +1.$$
\end{prop}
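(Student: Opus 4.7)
The plan is simply to expand the $4\times 4$ determinant directly and bookkeep the terms, using the normalization constraints from Proposition 2.1. The only slightly non-trivial input beyond raw algebra is the Hermitian constraint plus the fact that $|g_{13}|=1$, i.e.\ $g_{13}\bar g_{13}=1$.

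First I would write down the normalized Gram matrix explicitly. By Proposition 2.1 we may take
\begin{equation*}
G=\begin{pmatrix} 0 & 1 & g_{13} & g_{14}\\ 1 & 0 & 1 & g_{24}\\ \bar g_{13} & 1 & 0 & 1\\ \bar g_{14} & \bar g_{24} & 1 & 0\end{pmatrix},
\end{equation*}
so $g_{14}, g_{24}\in\mathbb{C}$ are arbitrary and $g_{13}$ satisfies $g_{13}\bar g_{13}=1$.

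Next I would compute $\det G$ by cofactor expansion along the first row. Since $g_{11}=0$, only three $3\times 3$ minors $M_{12}, M_{13}, M_{14}$ contribute, each of which is a short determinant containing the entries $1, g_{24}, \bar g_{13}, \bar g_{14}, \bar g_{24}$. A second cofactor expansion (or Sarrus' rule) reduces each $M_{1j}$ to an explicit polynomial in $g_{ij}$ and their conjugates; substituting and collecting gives
\begin{equation*}
\det G = 1 + |g_{14}|^2 + g_{13}\bar g_{13}|g_{24}|^2 - (g_{14}+\bar g_{14}) - (g_{13}\bar g_{24}+\bar g_{13}g_{24}) - (g_{13}\bar g_{14}g_{24}+\bar g_{13}g_{14}\bar g_{24}).
\end{equation*}

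Finally, I would apply the normalization $g_{13}\bar g_{13}=1$ to simplify the $|g_{24}|^2$ coefficient, and recognize each parenthesized pair as twice a real part: $g_{14}+\bar g_{14}=2\mathrm{Re}(g_{14})$, $g_{13}\bar g_{24}+\overline{g_{13}\bar g_{24}}=2\mathrm{Re}(g_{13}\bar g_{24})$, and $g_{13}\bar g_{14}g_{24}+\overline{g_{13}\bar g_{14}g_{24}}=2\mathrm{Re}(g_{13}\bar g_{14}g_{24})$. This yields exactly the claimed formula.

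The main obstacle here is purely combinatorial: keeping careful track of the signs and the conjugations through two levels of cofactor expansion, and correctly identifying which monomials pair up into conjugate pairs once $|g_{13}|=1$ is applied. There is no conceptual difficulty; the assertion is a direct computation, which is why the authors introduce it with that phrase.
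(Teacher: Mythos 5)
Your proposal is correct and matches the paper's approach: the paper offers no written proof beyond the phrase ``by direct computation,'' and the cofactor expansion you describe (I have checked that the intermediate polynomial and the final pairing into real parts both come out as you state, using $g_{13}\bar g_{13}=1$) is exactly that computation.
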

\begin{prop}
Let $G(i,j,k)$ be the submatrix of $G$ corresponding to the triple
$(p_i,p_j,p_k)$. Then
$$ \ \ \ \ \ \det G(1,2,3) = 2 {\rm Re}(\bar{{g}}_{13}),\ \ \ \ \ \ \  \
\ \det G(1,2,4) = 2 {\rm Re}({g}_{24} \ \bar{{g}}_{14}),$$
$$ \ \det G(1,3,4) = 2 {\rm Re}({g}_{13}  \ \bar{{g}}_{14}),  \ \
\ \  \det G(2,3,4) = 2 {\rm Re}(\bar{{g}}_{24}).$$
\end{prop}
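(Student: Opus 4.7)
The plan is a direct verification that exploits the fact that every $3\times 3$ principal submatrix of the normalized Gram matrix is Hermitian with zero diagonal. For any triple $1\leq i<j<k\leq 4$, since $p_i$ is null we have $g_{ii}=0$, and Hermiticity gives
\begin{equation*}
G(i,j,k) \;=\; \begin{pmatrix} 0 & g_{ij} & g_{ik} \\ \overline{g}_{ij} & 0 & g_{jk} \\ \overline{g}_{ik} & \overline{g}_{jk} & 0 \end{pmatrix}.
\end{equation*}

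Expanding this determinant (by cofactors along the first row, or by Sarrus), all four summands containing a diagonal entry vanish, leaving precisely $g_{ij}\,g_{jk}\,\overline{g}_{ik}$ and $g_{ik}\,\overline{g}_{ij}\,\overline{g}_{jk}$, which are complex conjugates of one another. Hence the single general identity
\begin{equation*}
\det G(i,j,k) \;=\; 2\,{\rm Re}\bigl(g_{ij}\,g_{jk}\,\overline{g}_{ik}\bigr)
\end{equation*}
holds for every triple of distinct indices, purely as a consequence of the Hermitian-with-zero-diagonal shape.

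It then remains to specialize to the four triples using the normalization $g_{12}=g_{23}=g_{34}=1$ established in Proposition 2.1. For $(1,2,3)$ and $(2,3,4)$ two of the three factors in the monomial $g_{ij}g_{jk}\overline{g}_{ik}$ are $1$, producing $2\,{\rm Re}(\overline{g}_{13})$ and $2\,{\rm Re}(\overline{g}_{24})$ respectively; for $(1,2,4)$ and $(1,3,4)$ only $g_{12}=1$, respectively $g_{34}=1$, survives, yielding $2\,{\rm Re}(g_{24}\,\overline{g}_{14})$ and $2\,{\rm Re}(g_{13}\,\overline{g}_{14})$. There is no substantive obstacle here: the entire argument is one $3\times 3$ determinant in which two of the six Leibniz terms are automatically killed by the zero diagonal, and the role of the normalization is simply to trim the surviving cubic monomials down to the stated binomials in the three remaining free entries $g_{13}$, $g_{14}$, $g_{24}$.
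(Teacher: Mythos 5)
Your proof is correct and matches the paper's approach: the paper simply states these formulae as following ``by direct computation,'' and your explicit expansion of the $3\times 3$ Hermitian zero-diagonal determinant into $2\,{\rm Re}(g_{ij}g_{jk}\overline{g}_{ik})$, followed by substitution of the normalization $g_{12}=g_{23}=g_{34}=1$, is exactly that computation carried out in full. The only (welcome) refinement is that you isolate the index-independent identity before specializing, which makes the verification uniform across the four triples.
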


\noindent {\bf Remark} It follows from Sylvester's Criterion that
all these determinants are negative or vanish. Also, it is easy to
see that $\det G =0$ if and only if $p$ is in the boundary of a
complex hyperbolic $2$-space and $\det G(i,j,k)=0$ if and only if
the corresponding triple lies on a chain.

\subsection{Cartan's angular invariant}
Let $p=(p_1,p_2,p_3)$ be an ordered triple of distinct points in the
boundary $\partial\ch{n}$ of complex hyperbolic $n$-space. Then {\sl
Cartan's angular invariant} $\mathbb{A}(p)$ of $p$ is defined to be
$$\mathbb{A}(p)=\arg (-\langle P_1, P_2, P_3\rangle),$$
where $P_i \in V^{n,1}$ are corresponding lifts of $p_i,$ and
$$\langle P_1, P_2, P_3\rangle=\herm{P_1,P_2} \herm{P_2,P_3}
\herm{P_3,P_1} \in \mathbb C$$ is the {\sl Hermitian triple
product.} It is verified that $\mathbb{A}(p)$ is independent of the
chosen lifts and satisfies
$$-\pi/2 \leq \mathbb{A}(p) \leq \pi/2.$$
The Cartan invariant is the only invariant of a triple of points:
$p$ and $p'$ are congruent in ${\rm PU}(n,1)$ if and only if
$\mathbb{A}(p) = \mathbb{A}(p').$ Basic properties of the Cartan
invariant can be found in Goldman \cite{Go}.

The following proposition shows a relation between Cartan's
invariant and the Gram determinants in Proposition 2.4.
\begin{prop} Let $p=(p_1,p_2,p_3,p_4)$ be an ordered quadruple of
distinct points of $\partial\ch{n}$ and $G=(g_{ij})$ be the
normalized Gram matrix of $p$. Then

$$ \ \ \ \mathbb{A}(p_1,p_2,p_3)=\arg(-\bar{g}_{13}),\ \ \  \ \  \ \
\ \ \mathbb{A}(p_1,p_2,p_4)=\arg(-g_{24}
\bar{g}_{14}),$$
$$ \mathbb{A}(p_1,p_3,p_4)=\arg(-g_{13} \bar{g}_{14}),  \ \ \ \
\  \mathbb{A}(p_2,p_3,p_4)=\arg(-\bar{g}_{24}).$$
\end{prop}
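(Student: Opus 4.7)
The plan is a direct computation that exploits the normalization of the Gram matrix from Proposition 2.1. Since the Cartan invariant does not depend on the chosen null lifts, I would take $P_1, P_2, P_3, P_4$ to be precisely the lifts for which $\herm{P_i,P_j} = g_{ij}$ realizes the normalized Gram matrix, so that $g_{12}=g_{23}=g_{34}=1$, $|g_{13}|=1$, and $g_{ji}=\bar{g}_{ij}$. With this choice every Hermitian product appearing in any triple product is simply one of the entries $g_{ij}$, so no further lift manipulation is required.

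The single computational step is then, for each of the four sub-triples $(p_i,p_j,p_k)$, to expand
\[ \langle P_i, P_j, P_k \rangle = \herm{P_i,P_j}\,\herm{P_j,P_k}\,\herm{P_k,P_i} = g_{ij}\,g_{jk}\,\bar{g}_{ik}, \]
substitute the three normalized entries, and apply $\arg(-\,\cdot\,)$. As a representative case, $(p_1,p_2,p_3)$ gives $g_{12}\,g_{23}\,\bar{g}_{13} = \bar{g}_{13}$, which yields $\mathbb{A}(p_1,p_2,p_3)=\arg(-\bar{g}_{13})$; the other three triples are handled identically, and in each of them only the entries $g_{13}$, $g_{14}$, $g_{24}$ (and their conjugates) survive after using $g_{12}=g_{23}=g_{34}=1$, producing exactly the four formulas stated.

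There is no genuine obstacle here: the argument is essentially bookkeeping made trivial by having already normalized three of the six off-diagonal entries to $1$. The only tacit point to check is that each expression inside $\arg$ is nonzero, which is immediate since the $p_i$ are distinct isotropic points and hence $g_{ij}=\herm{P_i,P_j}\neq 0$ whenever $i\neq j$, as already observed in the proof of Proposition 2.1. The sharper bound $\mathbb{A}\in[-\pi/2,\pi/2]$ need not be re-extracted from these formulas; it is part of the general theory of the Cartan invariant recalled earlier in the text.
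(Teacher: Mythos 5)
Your proposal is correct and is exactly the direct computation the paper intends: the paper states this proposition without a written proof, as an immediate consequence of the definition $\mathbb{A}(p_i,p_j,p_k)=\arg(-\herm{P_i,P_j}\herm{P_j,P_k}\herm{P_k,P_i})=\arg(-g_{ij}g_{jk}\bar{g}_{ik})$ together with the normalization $g_{12}=g_{23}=g_{34}=1$. Your added remarks on lift-independence and the nonvanishing of the $g_{ij}$ are accurate and consistent with the paper's setup.
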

One may use this to see again that the Cartan invariant lies in the
interval $[-\pi/2,\pi/2]$.

\subsection{The complex cross-ratio}
In \cite{KR}, Kor\'{a}nyi and Reimann defined a complex-valued
invariant associated to an ordered quadruple of distinct points of
$\partial\ch{n}.$ This invariant generalizes the usual cross-ratio
of a quadruple of complex numbers. Let $p=(p_1,p_2,p_3,p_4)$ be an
ordered quadruple of distinct points of $\partial\ch{n}$. Following
Goldman, \cite{Go}, the {\sl Kor\'{a}nyi - Reimann complex
cross-ratio} is defined to be
$$ X=X(p) = \frac{\herm{P_3,P_1} \herm{P_4,P_2}}{\herm{P_4,P_1}
\herm{P_3,P_2}},$$ where $P_i \in V^{n,1}$ are corresponding lifts
of $p_i.$ It is verified that the  complex cross-ratio is
independent of the chosen lifts $P_i$ and is invariant with respect
to the diagonal action of ${\rm PU}(n,1)$. Since the points $p_i$
are distinct, $X$ is finite and non-zero. More properties of the
complex cross-ratio may be found in Goldman \cite{Go}.

\vspace{2mm}

Let $p=(p_1,p_2,p_3,p_4)$ be an ordered quadruple of distinct points
of $\partial\ch{n}$. We define

\begin{equation*}
X_1=X(p_1,p_2,p_3,p_4), \ \ X_2=X(p_1,p_3,p_2,p_4),
 \ \ X_3=X(p_2,p_3,p_1,p_4).
\end{equation*}

\vspace{2mm}

\noindent {\bf Remark} These complex cross-ratios were considered by
Parker-Platis \cite{PP1}.

\vspace{3mm}

Easy computations give the following.
\begin{prop} Let $p=(p_1,p_2,p_3,p_4)$ be an ordered quadruple of distinct
points of $\partial\ch{n}$ and $G=(g_{ij})$ be the normalized Gram
matrix of $p$. Then
$$ X_1 = \frac{\bar{g}_{13}\bar{g}_{24}}{\bar{g}_{14}}, \ \
X_2 = \frac{1}{\bar{g}_{14}}, \ \ X_3
=\frac{1}{g_{13}\bar{g}_{24}},$$
and
$$ \ \ \ \ \ g_{13}=-e^{-i\mathbb A}, \ \ \  \ g_{14}=\frac{1}{\overline{X}_2}, \ \ \
g_{24}=-\frac{\overline{X}_1}{\overline{X}_2} \  \ e^{i\mathbb A},$$
where $\mathbb A$ is the Cartan invariant of the triple
$(p_1,p_2,p_3).$
\end{prop}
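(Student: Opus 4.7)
The proof will be entirely computational, reducing to substitution in the defining formula of the Koranyi--Reimann cross-ratio once we exploit the normalization of the Gram matrix. I outline the plan below.

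The plan is, first, to unpack notation. By the definition $G=(g_{ij})=(\herm{P_i,P_j})$ of the Gram matrix and the Hermitian symmetry of $\herm{-,-}$, for any $i,j$ we have $\herm{P_j,P_i}=\overline{g_{ij}}$. Combining this with Proposition~2.1, which asserts that after the normalization one has $g_{12}=g_{23}=g_{34}=1$ (and hence also $\overline{g}_{12}=\overline{g}_{23}=\overline{g}_{34}=1$), most of the factors appearing in the cross-ratio will trivialize.

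Next, for each of $X_1,X_2,X_3$ I would simply expand the definition
$$X(p_i,p_j,p_k,p_l)=\frac{\herm{P_k,P_i}\,\herm{P_l,P_j}}{\herm{P_l,P_i}\,\herm{P_k,P_j}}$$
and substitute. For $X_1=X(p_1,p_2,p_3,p_4)$ the denominator factor $\herm{P_3,P_2}=\overline{g}_{23}=1$ disappears and the surviving entries give exactly $\bar{g}_{13}\bar{g}_{24}/\bar{g}_{14}$. For $X_2=X(p_1,p_3,p_2,p_4)$, three of the four factors ($\herm{P_2,P_1}$, $\herm{P_4,P_3}$, $\herm{P_2,P_3}$) are normalized to $1$, leaving $1/\bar{g}_{14}$. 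For $X_3=X(p_2,p_3,p_1,p_4)$ the factors $\herm{P_1,P_2}$ and $\herm{P_4,P_3}$ trivialize and one is left with $1/(g_{13}\bar{g}_{24})$. This establishes the first set of identities.

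For the inverse formulas I would solve the three identities above for the three unknowns $g_{13},g_{14},g_{24}$ using Proposition~2.6 for the phase of $g_{13}$. Concretely: inverting $X_2=1/\bar{g}_{14}$ gives $g_{14}=1/\overline{X}_2$. For $g_{13}$, the normalization forces $|g_{13}|=1$, and Proposition~2.6 gives $\mathbb A(p_1,p_2,p_3)=\arg(-\bar{g}_{13})$; combining these two facts yields $\bar{g}_{13}=-e^{i\mathbb A}$, hence $g_{13}=-e^{-i\mathbb A}$. Finally, substituting the expressions just obtained for $g_{13}$ and $g_{14}$ into the relation $X_1=\bar{g}_{13}\bar{g}_{24}/\bar{g}_{14}$ and solving for $g_{24}$ produces $g_{24}=-\overline{X}_1 e^{i\mathbb A}/\overline{X}_2$, as claimed.

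There is no genuine obstacle: once the sesquilinearity identity $\herm{P_j,P_i}=\overline{g}_{ij}$ is used and the normalized values are substituted, every claim is a one-line manipulation. The only small points requiring care are keeping the order of arguments in $\herm{-,-}$ consistent (so that conjugations land in the correct place) and invoking Proposition~2.6 together with $|g_{13}|=1$ at the right moment to extract $g_{13}$ from its argument.
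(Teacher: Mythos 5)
Your computation is correct and is exactly the ``easy computation'' the paper leaves to the reader: expand each cross-ratio via $\herm{P_j,P_i}=\overline{g}_{ij}$, kill the factors normalized to $1$, and invert using $|g_{13}|=1$ together with $\mathbb{A}(p_1,p_2,p_3)=\arg(-\overline{g}_{13})$. One trivial slip: the formula for the phase of $g_{13}$ is Proposition~2.5 of the paper, not Proposition~2.6 (which is the statement you are proving).
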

Using these formulae and applying Corollary 2.1, we get the
following important result.
\begin{co} The numbers $ X_1,$ $ X_2,$ and $\mathbb A$ define
uniquely the congruence class of $p$ in ${\rm PU}(n,1)$.
\end{co}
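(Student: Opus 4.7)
The plan is to show that the triple $(X_1, X_2, \mathbb A)$ completely determines the normalized Gram matrix of $p$, after which Corollary~2.1 finishes the job immediately.

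Recall that by Proposition~2.1 the normalized Gram matrix $G=(g_{ij})$ of $p$ has $g_{ii}=0$, $g_{12}=g_{23}=g_{34}=1$, and $|g_{13}|=1$, and that $G$ is Hermitian. So the only entries that are not already pinned down by the normalization are $g_{13}$, $g_{14}$, $g_{24}$ (together with their conjugates $g_{31}, g_{41}, g_{42}$). The preceding proposition provides the explicit formulas
\[
g_{13}=-e^{-i\mathbb A},\qquad g_{14}=\frac{1}{\overline{X}_2},\qquad g_{24}=-\frac{\overline{X}_1}{\overline{X}_2}\,e^{i\mathbb A},
\]
each of which expresses the unknown entry as an explicit function of $X_1$, $X_2$, and $\mathbb A$ alone. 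Thus the entire matrix $G$ is read off directly from the data $(X_1,X_2,\mathbb A)$.

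Now I would invoke Corollary~2.1: two ordered quadruples $p$ and $p'$ in $\partial\ch{n}$ are congruent under $\mathrm{PU}(n,1)$ if and only if their normalized Gram matrices coincide. If $p$ and $p'$ share the same triple $(X_1,X_2,\mathbb A)$, then by the displayed formulas their normalized Gram matrices have identical entries, hence are equal, so $p$ and $p'$ are $\mathrm{PU}(n,1)$-congruent. This yields the corollary.

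There is really no obstacle here: the substantive content is in the proposition that extracts $g_{13}, g_{14}, g_{24}$ from $X_1, X_2, \mathbb A$, and in Corollary~2.1 which upgrades Gram-matrix equality to congruence. The present statement is merely a clean packaging of these two facts, so the proof consists of citing them in sequence.
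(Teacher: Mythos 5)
Your proof is correct and follows exactly the paper's route: the paper likewise derives this corollary by combining the formulas of Proposition~2.6, which recover $g_{13}$, $g_{14}$, $g_{24}$ (and hence the whole normalized Gram matrix) from $X_1$, $X_2$, $\mathbb A$, with Corollary~2.1. Nothing is missing.
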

\noindent {\bf Remark} It is easy to see that it is impossible to
express uniquely $g_{ij}$ in terms of $ X_1,$ $ X_2,$ $ X_3.$ In
fact, we have that
$$(g_{13})^2 \ =\frac{X_2}{X_1 \ X_3}, \ \  \  \  (g_{24})^2 \ =\frac{\overline{X}_1}{\
\overline{X}_2 \overline{X}_3}.$$ Therefore, the congruence class of
$p$ in ${\rm PU}(n,1)$ is not defined uniquely by $ X_1,$ $ X_2,$ $
X_3.$

\vspace{2mm}

Substituting $g_{ij}$ from the expressions in Proposition 2.6 into
the formulae in Proposition 2.3 and Proposition 2.4 and rearranging,
we get the formulae for the Gram determinants in terms of Cartan's
invariant and the complex cross-ratios.
\begin{prop} The determinant of the normalized Gram matrix of $p$ is given by
$$\det G = \frac{1}{\mod{X_2}^2} \ \ [-2{\rm Re}(X_1+X_2)
-2{\rm Re}(X_1 \ \overline{X}_2 \ e^{-i2\mathbb{A}}) + \mod{X_1}^2 +
\mod{X_2}^2 +1].
$$
\end{prop}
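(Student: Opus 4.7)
The plan is direct substitution: the proposition is a closed-form rewriting of Proposition 2.3 after eliminating the Gram entries via Proposition 2.6. Since the sentence preceding the statement flags this explicitly, no new geometric ingredient is needed; the entire task is algebraic bookkeeping.

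First I would record, once and for all, the three products that appear as terms of $\det G$ in Proposition 2.3 and compute each using the expressions
$$g_{13}=-e^{-i\mathbb A}, \qquad g_{14}=\frac{1}{\overline{X}_2}, \qquad g_{24}=-\frac{\overline{X}_1}{\overline{X}_2}\,e^{i\mathbb A}.$$
Specifically, $g_{14}=1/\overline{X}_2$ gives $\operatorname{Re}(g_{14})=\operatorname{Re}(X_2)/|X_2|^2$; using $\bar g_{24}=-(X_1/X_2)e^{-i\mathbb A}$ one gets $g_{13}\bar g_{24}=(X_1/X_2)e^{-i2\mathbb A}=(X_1\overline{X}_2/|X_2|^2)e^{-i2\mathbb A}$; and $g_{13}\bar g_{14}g_{24}=\overline{X}_1/|X_2|^2$, so that its real part equals $\operatorname{Re}(X_1)/|X_2|^2$. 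The two modulus-squared terms are immediate: $|g_{14}|^2=1/|X_2|^2$ and $|g_{24}|^2=|X_1|^2/|X_2|^2$.

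Second, I would insert these six quantities into the formula of Proposition 2.3, factor the common denominator $|X_2|^2$, and combine the two real parts $-2\operatorname{Re}(X_1)-2\operatorname{Re}(X_2)$ into $-2\operatorname{Re}(X_1+X_2)$. This produces exactly the right-hand side of the claimed identity.

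There is essentially no obstacle: the only traps are sign and conjugation errors, most delicately in computing $\bar g_{24}$ (note $\overline{e^{i\mathbb A}}=e^{-i\mathbb A}$ and that the two minus signs in $g_{13}$ and $g_{24}$ cancel in $g_{13}\bar g_{14}g_{24}$ but survive in $g_{13}\bar g_{24}$, where they also cancel). Because $p_i$ are distinct, $X_2\neq 0$, so dividing by $|X_2|^2$ is legitimate, and the resulting expression is the desired formula.
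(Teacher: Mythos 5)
Your proposal is correct and is precisely the argument the paper intends: the paper offers no separate proof of this proposition, merely the remark that one substitutes the expressions of Proposition 2.6 into Proposition 2.3 and rearranges, which is exactly what you carry out (and your intermediate values $g_{13}\bar g_{24}=X_1\overline{X}_2 e^{-2i\mathbb A}/|X_2|^2$, $g_{13}\bar g_{14}g_{24}=\overline{X}_1/|X_2|^2$, $|g_{14}|^2=1/|X_2|^2$, $|g_{24}|^2=|X_1|^2/|X_2|^2$ all check out, with the trailing $+1$ of Proposition 2.3 accounting for the $|X_2|^2$ inside the bracket).
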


\begin{prop} The determinants of the submatrices $G(i,j,k)$ of $G$ are given by

$$\det G(1,2,3)= -2{\rm Re}(e^{i \mathbb{A}}), \ \ \ \ \ \ \ \ \ \ \ \
\ \ \ \ \ \ \ \ \det G(1,2,4)=-2{\rm Re}\Bigl(
\frac{\overline{X}_1}{\mod{X_2}^2} \ e^{i \mathbb{A}}\Bigr), $$

$$ \  \  \ \  \det G(1,3,4)=-2{\rm Re}\Bigl(
\frac{\overline{X}_2}{\mod{X_2}^2} \ e^{-i \mathbb{A}}\Bigr),\  \ \
\ \ \ \ \ \  \det G(2,3,4)=-2{\rm Re}\Bigl( \frac{X_1 \
\overline{X}_2}{\mod{X_2}^2} \ e^{-i \mathbb{A}}\Bigr).$$
\end{prop}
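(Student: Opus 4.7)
The plan is a direct substitution: Proposition 2.4 already expresses the four sub-Gram determinants as twice the real part of simple monomials in $g_{13},g_{14},g_{24}$ (since in the normal form the entries $g_{12},g_{23},g_{34}$ are all equal to $1$), and Proposition 2.6 already provides closed-form expressions for $g_{13},g_{14},g_{24}$ in terms of $X_1,X_2,\mathbb{A}$. So the entire task reduces to plugging the latter into the former and tidying up.

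Concretely, I would proceed line by line through the four cases. For $\det G(1,2,3)=2\,\mathrm{Re}(\bar g_{13})$, substitute $g_{13}=-e^{-i\mathbb{A}}$ to get $-2\,\mathrm{Re}(e^{i\mathbb{A}})$ directly. For $\det G(2,3,4)=2\,\mathrm{Re}(\bar g_{24})$, substitute $g_{24}=-\overline{X}_1 e^{i\mathbb{A}}/\overline{X}_2$, multiply numerator and denominator by $X_2$ to replace $1/\overline{X}_2$ by $X_2/|X_2|^2$, and use $\mathrm{Re}(w)=\mathrm{Re}(\bar w)$ to pass from $\mathrm{Re}(-\overline{X}_1X_2 e^{i\mathbb{A}}/|X_2|^2)$ to $-\mathrm{Re}(X_1\overline{X}_2 e^{-i\mathbb{A}}/|X_2|^2)$. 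The remaining two cases, $\det G(1,2,4)=2\,\mathrm{Re}(g_{24}\bar g_{14})$ and $\det G(1,3,4)=2\,\mathrm{Re}(g_{13}\bar g_{14})$, follow by the same two moves: substitute $g_{14}=1/\overline{X}_2$ together with the appropriate expression for the other factor, and absorb $1/(\overline{X}_2\cdot\text{something})$ into $|X_2|^2$ in the denominator.

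There is no genuine obstacle here beyond careful bookkeeping of signs, complex conjugates, and the $|X_2|^2$ normalization. The only small subtlety worth flagging explicitly is the identity $\overline{X}_2/\overline{X}_2\cdot X_2/X_2 = 1$ applied to rewrite $1/\overline{X}_2$ as $X_2/|X_2|^2$ so that each answer acquires the uniform denominator $|X_2|^2$ appearing in the statement; together with $\mathrm{Re}(z)=\mathrm{Re}(\bar z)$, this converts the expressions that naturally arise from substitution into the exact form displayed in the proposition.
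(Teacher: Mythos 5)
Your proposal is correct and is exactly the paper's argument: the authors introduce this proposition with the sentence that it follows by "substituting $g_{ij}$ from the expressions in Proposition 2.6 into the formulae in Proposition 2.3 and Proposition 2.4 and rearranging," which is precisely your substitution of $g_{13}=-e^{-i\mathbb A}$, $g_{14}=1/\overline{X}_2$, $g_{24}=-(\overline{X}_1/\overline{X}_2)e^{i\mathbb A}$ into $\det G(i,j,k)=2\,{\rm Re}(\cdot)$, followed by the rewriting $1/\overline{X}_2=X_2/|X_2|^2$ and ${\rm Re}(z)={\rm Re}(\bar z)$. All four resulting expressions check out.
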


\section{The configuration space of ordered quadruples in the
boundary of complex hyperbolic space and its moduli space}

\subsection{The moduli space of ordered quadruples in the
boundary of the complex hyperbolic plane}

Let $\mathcal M $ be the configuration space of ordered quadruples
of distinct points in the boundary of the complex hyperbolic plane,
that is, the quotient of the set of ordered quadruples of distinct
points of $\partial\ch{2}$ with respect to the diagonal action of
${\rm PU}(2,1)$ equipped with the quotient topology. In this
section, we construct a moduli space for $\mathcal M$.

\vspace{4mm}
The following proposition is the crucial result in our
construction of the moduli space for $\mathcal M$.

\begin{prop} Let $G=(g_{ij})$ be an Hermitian $4\times 4$-matrix such that
$g_{ii}=0$, $g_{12}=g_{23}=g_{34}=1$, $\mod {g_{13}}=1$, $g_{14}
\neq 0,$ $g_{24} \neq 0.$ Then $G$ is the normalized Gram matrix for
some ordered quadruple $p=(p_1,p_2,p_3,p_4)$ of distinct points of
$\partial\ch{2}$ if and only if {\rm Re}$(g_{13})\leq 0$, {\rm
Re}$(g_{24} \bar{g}_{14})\leq 0$, and $\det G=0.$
\end{prop}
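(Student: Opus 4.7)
The plan is to prove both implications separately. Necessity follows immediately from Proposition~2.4 combined with a dimension count. For sufficiency I construct the lifts $P_1,P_2,P_3,P_4\in\mathbb{C}^{2,1}$ explicitly, splitting into cases according to which of the $3\times 3$ principal minors $\det G(1,2,3)=2{\rm Re}(g_{13})$ and $\det G(1,2,4)=2{\rm Re}(g_{24}\bar{g}_{14})$ vanishes.

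For necessity, if $G$ is the normalized Gram matrix of $(p_1,\ldots,p_4)$, then its four lifts sit inside the $3$-dimensional space $\mathbb{C}^{2,1}$, are linearly dependent, and so $\det G=0$. The two inequalities come from Proposition~2.4 together with the Sylvester-type remark recorded after it: any $3\times 3$ principal Gram minor of null vectors in a signature-$(2,1)$ Hermitian form is non-positive.

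For sufficiency the principal case is ${\rm Re}(g_{13})<0$. Then $G(1,2,3)$ has strictly negative determinant and negative $2\times 2$ principal minors, so has signature $(2,1)$, and the Cartan classification (with invariant $\mathbb{A}=\arg(-\bar{g}_{13})\in(-\pi/2,\pi/2)$) produces linearly independent null vectors $P_1,P_2,P_3\in\mathbb{C}^{2,1}$ with Gram matrix $G(1,2,3)$; these span the ambient space. I then write $P_4=\alpha P_1+\beta P_2+\gamma P_3$ and impose $\langle P_4,P_i\rangle=g_{i4}$ for $i=1,2,3$; the coefficient determinant of this $3\times 3$ system is $\det G(1,2,3)=2{\rm Re}(g_{13})\neq 0$, so $P_4$ is uniquely determined. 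Since any four vectors in $\mathbb{C}^{2,1}$ are linearly dependent, the Gram matrix $G'$ of $(P_1,P_2,P_3,P_4)$ satisfies $\det G'=0$; it agrees with $G$ except possibly at position $(4,4)$, so cofactor expansion along the last row yields $\langle P_4,P_4\rangle\cdot 2{\rm Re}(g_{13})=\det G'-\det G=0$, forcing $\langle P_4,P_4\rangle=0$. Distinctness of the boundary points follows from $g_{ij}\neq 0$ for $i\neq j$. The sister case ${\rm Re}(g_{13})=0$ with ${\rm Re}(g_{24}\bar{g}_{14})<0$ is handled symmetrically, using $G(1,2,4)$ as the base triangle and solving for $P_3$.

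The step I expect to be most delicate is the doubly degenerate case ${\rm Re}(g_{13})={\rm Re}(g_{24}\bar{g}_{14})=0$. Here both base triangles have rank $2$; geometrically, the triples $(p_1,p_2,p_3)$ and $(p_1,p_2,p_4)$ must lie on chains sharing $p_1,p_2$, and since a complex geodesic is determined by any two distinct boundary points, all four points lie on a single chain. I pick null $P_1,P_2$ in a signature-$(1,1)$ plane of $\mathbb{C}^{2,1}$ with $\langle P_1,P_2\rangle=1$ and define
\begin{equation*}
P_3=P_1+\bar{g}_{13}\,P_2,\qquad P_4=\bar{g}_{24}\,P_1+\bar{g}_{14}\,P_2.
\end{equation*}
Direct expansion verifies $\langle P_i,P_j\rangle=g_{ij}$ for every pair except possibly $(3,4)$, where the required identity $\langle P_3,P_4\rangle=1$ reduces to $g_{14}+\bar{g}_{13}\,g_{24}=1$. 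This is the crux, and I derive it from the algebraic identity
\begin{equation*}
|g_{14}+\bar{g}_{13}\,g_{24}-1|^{2}\;=\;\det G+4\,{\rm Re}(g_{13})\,{\rm Re}(g_{24}\bar{g}_{14}),
\end{equation*}
obtained by expanding the left-hand side and comparing with Proposition~2.3; under the hypotheses of this case the right-hand side vanishes, and the desired scalar identity follows.
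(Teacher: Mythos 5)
Your proof is correct, but it takes a genuinely different route from the paper's. The paper works in explicit coordinates: it fixes $P_1,P_2$ as the standard null basis vectors, writes $P_3=(z_1,z_2,1)^T$ and $P_4=(w_1,w_2,w_3)^T$ with unknown entries, and reduces the problem to a system of three scalar equations; the crux is the computation showing that the obstruction $\mod{1-z_1\bar w_3-\bar w_1}^2-\mod{z_2\bar w_2}^2$ equals $\det G$, after which the last equation is solved by rotating $z_2$, with a brief separate remark for the case $z_2=0$ or $w_2=0$. You instead argue structurally: when ${\rm Re}(g_{13})<0$ the triangle matrix $G(1,2,3)$ has signature $(2,1)$ and is therefore realized by a null basis of $\mathbb C^{2,1}$, the fourth vector is determined by a nondegenerate linear system, and the cofactor expansion $\det G'-\det G=\langle P_4,P_4\rangle\det G(1,2,3)$ shows that $\det G=0$ is exactly the condition forcing $P_4$ to be null. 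Your doubly degenerate case rests on the identity $\mod{g_{14}+\bar g_{13}g_{24}-1}^2=\det G+4\,{\rm Re}(g_{13})\,{\rm Re}(g_{24}\bar g_{14})$, which I have checked against Proposition 2.3 (it uses $\mod{g_{13}}=1$) and which is correct; it is essentially the same algebraic fact the paper establishes as $A=\det G$, specialized to the rank-two situation. The paper's method is uniform (no case split), produces explicit coordinates for the points, and is the template for the higher-dimensional Proposition 3.11, where equality becomes the Cauchy--Schwarz inequality and your Case 1 argument would no longer force $P_4$ into the span of $P_1,P_2,P_3$; your method makes the role of $\det G=0$ more transparent and avoids the rotation argument, at the cost of a three-way case analysis. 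One small imprecision: what you call the ``Cartan classification'' is really just Sylvester's law of inertia (or Witt's theorem) applied to the signature-$(2,1)$ form $G(1,2,3)$; the Cartan invariant itself plays no role in producing the null basis.
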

\begin{proof} Let us assume that $G$ is the normalized Gram matrix associated
to an ordered quadruple $p=(p_1,p_2,p_3,p_4)$ of distinct points of
$\partial\ch{2}.$ Then it follows from Proposition 2.4 that
$$ \  \ \ \det G(1,2,3)=2 {\rm Re}{(\overline{g}_{13})},\ \ \ \ \
\ \ \det G(1,2,4)=2{\rm Re} (g_{24} \ {\overline{g}_{14}}).$$ By
Sylvester's Criterion, these determinants are negative or vanish.
Since $p$ is in the boundary of the complex hyperbolic space of
dimension $2$, any vectors $P_1,P_2,P_3,P_4$ representing
$p_1,p_2,p_3,p_4$ are linearly dependent. This implies that $\det
G=0.$

Now let $G=(g_{ij})$ be an Hermitian $4\times 4$-matrix such that
$g_{ii}=0$, $g_{12}=g_{23}=g_{34}=1$, $\mod {g_{13}}=1,$  ${\rm
Re}(g_{13})\leq 0$, ${\rm Re}{(g}_{24} \bar{{g}}_{14})\leq 0$, and
$\det G =0$. We are going to show that there exist four null
(isotropic) vectors $P_1,P_2,P_3,P_4,$ $P_i \in \mathbb {C}^{2,1},$
whose Gram matrix is equal to $G$.

\vspace{2mm}

We will look for these vectors in the following form:

$$P_1 \ = \ \begin{bmatrix} 0 \\ 0 \\  1 \end{bmatrix},
\ \ \ \  P_2 \ = \ \begin{bmatrix} 1 \\  0 \\ 0 \end{bmatrix}, \ \ \
P_3 \ = \ \begin{bmatrix} z_1 \\ z_2 \\ 1 \end{bmatrix}, \ \ \  P_4
\ = \   \begin{bmatrix} w_1 \\ w_2 \\ w_3 \end{bmatrix}, $$ where
$z_i$, $w_i$ are complex numbers, and $\mod{z_1}=1.$

\vspace{2mm}

Then we have
$$\herm{P_1,P_2}=1, \ \herm{P_1,P_3}=\bar{z}_{1}, \
\herm{P_1,P_4}=\bar{w}_{1}, \ \herm{P_2,P_3}=1, \
\herm{P_2,P_4}=\bar{w}_3, \ \herm{P_3,P_4}=z_1 \bar{w}_3 + z_2
\bar{w}_2 +\bar{w}_1.
$$
Since we need $P_3$ and $P_4$ to be null vectors, we have the
following equations:
$$ z_1 + \mod{z_2}^2 + \bar{z}_1 =0, \ \ \ \ \ \ \ w_1 \bar{w}_3 + \mod{w_2}^2 +
\bar{w}_1 w_3 =0.
$$
From the definition of the Gram matrix, we have
$$
g_{12}=1,\ \  g_{13}=\bar{z}_1 ,\ \  g_{14}=\bar{w}_1 , \ \
g_{23}=1, \ \  g_{24}= \bar{w}_3 ,
$$
and
$$
g_{34}= z_1 \bar{w}_3 + z_2 \bar{w}_2 + \bar{w}_1 = 1.
$$
This implies that we have already found $z_1 , w_1, w_3 $ in terms
of $g_{ij}$. Therefore, we need to find $z_2$ and $w_2$.

\vspace{2mm}
We consider the following system of equations
$$ (1)\ \ z_1 + \mod{z_2}^2 + \bar{z}_1 =0, \ \ \ \ \
(2) \  \  w_1 \bar{w}_3 + \mod{w_2}^2 + \bar{w}_1 w_3 =0, \ \ \ \ \
(3) \ \  z_1 \bar{w}_3 + z_2 \bar{w}_2 + \bar{w}_1 = 1, $$ and show
that it has a solution under the conditions of the proposition. We
write the first two equations in the following form:
$$|z_2|^2 = -2 {\rm Re} (z_1) = -2{\rm Re}(g_{13}), \ \ \ \ \
|w_2|^2 = -2{\rm Re}(w_1 \bar{{w}}_3) = -2{\rm Re}(g_{24} \bar{{\mit
g}}_{14}).$$ We immediately get that there exist solutions for
$\mod{z_2}$ and $\mod{w_2}$ under our conditions.

\vspace{2mm}

The third equation may be written as
$$   z_2 \bar{w}_2 = 1 \ - \  z_1 \bar{w}_3 \  -  \ \bar{w}_1.$$
Let us assume that the equation $\mod{z_2 \bar{w}_2} = \mod{1 \ - \
z_1 \bar{w}_3 \  -  \ \bar{w}_1}$ holds for some $z_2$ and $w_2$
satisfying equations $(1)$ and $(2)$, and let $(z_2,w_2)$ be any
solution to this equation. First, we suppose that $z_2 \neq 0, w_2
\neq 0$. In this case, we put $z'_2 \ = \ e^{i\alpha} \ z_2.$ Then
$z'_2 \ \bar{w}_2 =\ e^{i\alpha} \ ( z_2 \bar{w}_2).$ Since two
complex numbers have the same norm if and only if there exists a
rotation which sends one number to another, we have that there
exists $\alpha$ such that $ z'_2 \bar{w}_2 = 1 \ - \ z_1 \bar{w}_3 \
-  \ \bar{w}_1.$ Therefore, the system above has solutions if and
only if
 $$A = \
\mod{1 \ - \ z_1 \bar{w}_3 \  -  \ \bar{w}_1}^2 \  - \ \mod{z_2
\bar{w}_2}^2 \ =0.$$ Substituting $\mod{z_2}$ and $\mod{w_2}$ from
equations (1) and (2) into  $A,$ and then rearranging, we have that
$$A = \ \mod{1 \ - \ z_1
\bar{w}_3 \  -  \ \bar{w}_1}^2 \ - \ \mod{z_2 \bar{w}_2}^2 =
$$
$$
(1 \ -  z_1 \bar{w}_3 \  -   \bar{w}_1) \ \ (1 \ -  \bar {z}_1  \
w_3 \ - \ w_1) \ - (z_1 + \bar{z}_1) \  (w_1 \ \bar{w}_3 \ \ + w_3 \
\bar{w}_1) =
$$
$$1 + \mod{z_1}^{2}  \mod{w_3}^{2} \ + \ \mod{w_1}^{2} \
-2 {\rm Re}(w_1) \  -2 {\rm Re} ( z_1 \ \bar{ w}_3) -2 {\rm Re} (
z_1  w_3 \bar{w}_1).
$$
Since  $g_{13}=\bar{z}_1 ,\ \ g_{14}=\bar{w}_1 ,  \ \ g_{24}=
\bar{w}_3,$ and $\mod {g_{13}}=1,$ we get the following expression
for $A$ in terms of $g_{ij}$:

$$A= -2{\rm Re}(g_{14}) \ -2{\rm Re}(g_{13} \bar{g}_{24}) \
- 2{\rm Re}(g_{13} \bar{g}_{14} g_{24}) + \mod{g_{14}}^2 +
\mod{g_{24}}^2 +1.
$$
By applying Proposition 2.3, we see that $A=\det G$. Since, by our
hypothesis $\det G =0$, we get the result we need in the case $z_2$
and $w_2$ are not equal to zero. It is easy to see that there exists
a solution to the system $(1)-(3)$  when one of the numbers $z_2$ or
$w_2$ is equal to zero, since in this case the third equation is
satisfied automatically provided that $A=0$. Finally, one sees that
if $g_{14} \neq 0,$ $g_{24} \neq 0,$ then the points $p_i$ defined
by the vectors $P_i$ are distinct. This proves the proposition.
\end{proof}
\begin{co}Let $G=(g_{ij})$ be an Hermitian $4\times 4$-matrix such that $g_{ii}=0$,
$g_{12}=g_{23}=g_{34}=1$, $\mod {g_{13}}=1,$ $g_{14} \neq 0,$
$g_{24} \neq 0,$ and $\det G=0.$  Then the inequalities {\rm
Re}$(g_{13})\leq 0$ and {\rm Re}$(g_{24} \bar{g}_{14})\leq 0$ imply
the inequalities {\rm Re}$(g_{13} \bar{g}_{14})\leq 0$ and {\rm
Re}$(\bar{g}_{24})\leq 0.$
\end{co}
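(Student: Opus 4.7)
The plan is to observe that the hypotheses of the corollary are exactly the sufficient conditions appearing in Proposition 3.1, so that $G$ may be realized as a genuine normalized Gram matrix, and then to read off the two remaining inequalities from Proposition 2.4 via Sylvester's Criterion.

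First, I would apply Proposition 3.1 in the ``if'' direction. The standing normalization conditions on $G$, together with the three assumptions ${\rm Re}(g_{13})\leq 0$, ${\rm Re}(g_{24}\bar g_{14})\leq 0$, and $\det G=0$, are precisely what is needed to produce an ordered quadruple $p=(p_1,p_2,p_3,p_4)$ of distinct points of $\partial\ch{2}$ whose normalized Gram matrix is $G$. In particular, there exist null lifts $P_1,P_2,P_3,P_4\in\mathbb{C}^{2,1}$ with $\herm{P_i,P_j}=g_{ij}$.

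Next, I would invoke Proposition 2.4, which identifies the two remaining triangle sub-determinants as
$$\det G(1,3,4) = 2\,{\rm Re}(g_{13}\bar g_{14}), \qquad \det G(2,3,4) = 2\,{\rm Re}(\bar g_{24}).$$
Because $G(1,3,4)$ and $G(2,3,4)$ are Gram matrices of triples of null vectors in the signature $(2,1)$ space $\mathbb{C}^{2,1}$, Sylvester's Criterion (as recalled in the remark immediately after Proposition 2.4) forces both determinants to be non-positive. This yields ${\rm Re}(g_{13}\bar g_{14})\leq 0$ and ${\rm Re}(\bar g_{24})\leq 0$, which are precisely the conclusions of the corollary.

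The only real obstacle here is recognizing that Proposition 3.1 does all the heavy lifting: once $G$ is known to come from actual null vectors in $\mathbb{C}^{2,1}$, the remaining inequalities are automatic consequences of the signature. A purely algebraic attack, starting from the determinant identity of Proposition 2.3 and trying to extract the sign of ${\rm Re}(g_{13}\bar g_{14})$ directly from $\det G=0$ and the two given inequalities, would be considerably more tangled, so routing the argument through the realization step is by far the cleanest approach.
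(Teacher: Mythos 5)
Your proof is correct and follows exactly the paper's argument: apply Proposition 3.1 to realize $G$ as the normalized Gram matrix of an actual quadruple in $\partial\ch{2}$, then read off the non-positivity of $\det G(1,3,4)$ and $\det G(2,3,4)$ from Proposition 2.4 and Sylvester's Criterion. No further comment is needed.
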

\begin{proof} It follows from Proposition 3.1 that under the conditions
above $G$ is the normalized Gram matrix for some ordered quadruple
of distinct points of $\partial\ch{2}.$ Then the result follows from
Proposition 2.4.
\end{proof}

\vspace{2mm}
\begin{co} Let $G=(g_{ij})$ be an Hermitian $4\times 4$-matrix
satisfying all the conditions of Proposition 3.1. Then any two
quadruples of points $p=(p_1,p_2,p_3,p_4)$ and
$p'=(p'_1,p'_2,p'_3,p'_4)$ of $\partial\ch{2}$ defined by the
quadruples of null vectors $P=(P_1,P_2,P_3,P_4)$ and
$P'=(P'_1,P'_2,P'_3,P'_4)$ that correspond to solutions  with $(z_2,
w_2)$ and $(z'_2, w'_2)$ respectively are congruent in ${\rm
PU}(2,1).$
\end{co}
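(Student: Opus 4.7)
The plan is to reduce this corollary to Corollary 2.1, which states that two ordered quadruples of distinct points in $\partial\ch{n}$ are congruent in $\pu{n}$ if and only if their normalized Gram matrices are equal. So the whole task is to observe that both quadruples $p$ and $p'$ produced by Proposition 3.1 have the same normalized Gram matrix, namely the matrix $G$ we started with.

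First I would check that the vectors $P=(P_1,P_2,P_3,P_4)$ built from any valid solution $(z_2,w_2)$ of the system $(1)$--$(3)$ inside the proof of Proposition 3.1 produce exactly the Hermitian products $\herm{P_i,P_j}=g_{ij}$. This was verified during the construction: the entries $z_1$, $w_1$, $w_3$ were solved for directly in terms of the $g_{ij}$, and $|z_2|$, $|w_2|$ as well as the relative phase of $z_2\bar w_2$ were forced by the equations that encode precisely the remaining entries $g_{13}$, $g_{14}$, $g_{24}$, $g_{34}$ of $G$. Therefore the Gram matrix of $P$ is literally $G$, and the same reasoning applies to $P'$ with $(z'_2,w'_2)$.

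Next I would observe that $G$ is already in normal form: the hypotheses give $g_{ii}=0$, $g_{12}=g_{23}=g_{34}=1$ and $|g_{13}|=1$. Hence $G$ is the normalized Gram matrix of both $p$ and $p'$ (no further rescaling is needed, because the equivalence class contains a unique element in normal form by Proposition 2.1). Applying Corollary 2.1 then yields that $p$ and $p'$ are congruent in $\pu{2}$.

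There is essentially no main obstacle here; the work has already been done inside Proposition 3.1 and Corollary 2.1. The only point worth underlining is that the freedom in the choice of $(z_2,w_2)$ (namely the rotational phase $\alpha$ and the independent phases of $z_2$ and $w_2$ when one of them is zero) never affects the Hermitian products $\herm{P_i,P_j}$, so this apparent nonuniqueness at the level of vectors descends to a genuinely unique class at the level of points of $\partial\ch{2}$ modulo $\pu{2}$.
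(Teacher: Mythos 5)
Your argument is correct and is essentially the paper's own proof: both observe that every solution $(z_2,w_2)$ of the system in Proposition 3.1 yields vectors whose Gram matrix is exactly the given normalized matrix $G$, and then conclude congruence from the Gram-matrix criterion (you invoke Corollary 2.1, the paper invokes Proposition 2.2, which are interchangeable here). No gap.
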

\begin{proof} It follows from the proof of Proposition 3.1 that the
quadruples $ p=(p_1,p_2,p_3,p_4)$ corresponding to the vectors
$P_1,P_2,P_3,P_4$ defined there  have the same normalized Gram
matrix. By applying Proposition 2.2, we get the result we want.
\end{proof}

\vspace{2mm}

Let $p=(p_1,p_2,p_3,p_4)$ be an ordered quadruple of distinct points
of $\partial\ch{2}$, and let $m(p)\in \mathcal M $ be the point
represented by $p=(p_1,p_2,p_3,p_4)$.  We define the map
$$\tau : \mathcal M \longrightarrow  \mathbb C^{2} \times \mathbb R$$
by the following formula:

\begin{equation*}
\dps \tau :m(p)\mapsto  \bigl (X_1=X(p_1,p_2,p_3,p_4),\ X_2=
X(p_1,p_3,p_2,p_4),\ \mathbb A= \mathbb{A} (p_1,p_2,p_3)\bigr),
\end{equation*}
where  $\mathbb A= \mathbb{A} (p_1,p_2,p_3)$ is the Cartan invariant
of the triple $(p_1,p_2,p_3)$.

\begin{prop} Let $X_1, X_2, \mathbb A$ be the numbers
defined by $\tau$. Then they satisfy the following relation:

\begin{equation*}
\dps -2{\rm Re}(X_1+X_2) -2{\rm Re} (X_1\ \overline{X}_2\ e^{-
i2\mathbb A})+ \mod {X_1}^2 +\mod {X_2}^2 +1=0.
\end{equation*}
\end{prop}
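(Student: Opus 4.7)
The plan is to deduce the relation directly by combining two earlier results: Proposition 3.1, which forces the normalized Gram determinant to vanish for quadruples in $\partial\ch{2}$, and Proposition 2.7, which expresses that determinant in terms of $X_1$, $X_2$, and $\mathbb{A}$.

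First I would let $G = (g_{ij})$ be the normalized Gram matrix of $p$. Since $p$ consists of four distinct points in $\partial\ch{2}$, any four lifts $P_1,P_2,P_3,P_4 \in \mathbb{C}^{2,1}$ are linearly dependent (there are only three dimensions available), so $\det G = 0$. This vanishing is precisely the necessity direction already noted in the proof of Proposition 3.1.

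Next I would invoke Proposition 2.7, which gives
\begin{equation*}
\det G = \frac{1}{\mod{X_2}^2}\bigl[-2{\rm Re}(X_1+X_2) - 2{\rm Re}(X_1\ \overline{X}_2\ e^{-i2\mathbb{A}}) + \mod{X_1}^2 + \mod{X_2}^2 + 1\bigr].
\end{equation*}
Since the points $p_i$ are distinct, the complex cross-ratios are finite and nonzero, so $\mod{X_2}^2 \neq 0$. Multiplying the vanishing identity $\det G = 0$ by $\mod{X_2}^2$ yields the claimed relation.

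There is essentially no obstacle here: the work is already absorbed into Propositions 2.6, 2.7, and 3.1. The only point worth emphasizing is that the proof uses the hypothesis $n=2$ in an essential way, through the linear dependence argument that gives $\det G = 0$; for larger $n$ one only gets $\det G \leq 0$, which is exactly why Theorem C replaces the equality by an inequality.
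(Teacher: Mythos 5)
Your proof is correct and follows exactly the paper's own argument: the four lifts in $\mathbb{C}^{2,1}$ are linearly dependent, so the Gram determinant vanishes, and the relation then falls out of the formula for $\det G$ in Proposition 2.7. The extra remarks about $\mod{X_2}\neq 0$ and the role of $n=2$ are accurate but not points of divergence.
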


\begin{proof}
Since $p \in \partial\ch{2}$,  the determinant of the Gram matrix
$G=G(p)$ associated to $P_i$, where $P_i \in V^{2,1}$ is a lift of
$p_i$, is equal to zero because $ P_1, P_2, P_3, P_4$ are linearly
dependent. Then the result follows from Proposition 2.7.
\end{proof}

\vspace{2mm}
 We are now ready to prove our main result.
\begin{theorem}
The configuration space $\mathcal M $ is homeomorphic to the set of
points  $X=(X_1, X_2, A) \ \in \ \mathbb C_{*}^2 \times \mathbb R$
defined by
\begin{equation*} \dps -2{\rm Re}(X_1+X_2) -2{\rm Re}
(X_1\ \overline{X}_2\ e^{-\ i2 A})+ \mod {X_1}^2 +\mod {X_2}^2 +1=0,
\end{equation*}
subject to the following conditions
$$\dps -\pi /2 \leq A \leq \pi/2, \ \ \ {\rm Re} (X_1 \ e^{-i A})\geq
0.$$
\end{theorem}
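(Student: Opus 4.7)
The strategy is to use the normalized Gram matrix as a bridge between $\mathcal M$ and the candidate moduli space $\mathbb M$. By Corollary 2.1, $\mathcal M$ is in bijection with the set of normalized Gram matrices of quadruples in $\partial\ch{2}$, and by Proposition 3.1 such matrices are characterized by $|g_{13}|=1$, $g_{14},g_{24}\neq 0$, $\det G=0$, $\mathrm{Re}(g_{13})\leq 0$, and $\mathrm{Re}(g_{24}\bar g_{14})\leq 0$. Since Proposition 2.6 gives a pair of mutually inverse substitutions between the entries $g_{13},g_{14},g_{24}$ and the triple $(X_1,X_2,\mathbb A)$, the plan is to show that the conditions defining $\mathbb M$ are exactly the translations of the conditions of Proposition 3.1.

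Concretely, I would first verify that $\tau$ is well defined on $\mathcal M$ (cross-ratios and the Cartan invariant are $\mathrm{PU}(2,1)$-invariant) and lands in $\mathbb M$: the defining equation is Proposition 3.2, the range $-\pi/2\leq A\leq \pi/2$ is the classical range of the Cartan invariant (equivalently, Sylvester applied to $\det G(1,2,3)$ via Proposition 2.8), and the inequality $\mathrm{Re}(X_1 e^{-iA})\geq 0$ is, by Proposition 2.8, nothing other than $\det G(1,2,4)\leq 0$, again from Sylvester's Criterion. Distinctness of the four points gives $X_1,X_2\neq 0$. Next, to get injectivity, I would suppose $\tau(m(p))=\tau(m(p'))$, read off via Proposition 2.6 that the normalized Gram matrices of $p$ and $p'$ agree, and apply Corollary 2.1.

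For surjectivity, given $(X_1,X_2,A)\in \mathbb M$ I would build the Hermitian matrix $G$ with $g_{ii}=0$, $g_{12}=g_{23}=g_{34}=1$ and $g_{13}, g_{14}, g_{24}$ supplied by the formulas of Proposition 2.6. One then checks each hypothesis of Proposition 3.1: $|g_{13}|=|-e^{-iA}|=1$ is immediate; $g_{14}=1/\bar X_2$ and $g_{24}=-\bar X_1 e^{iA}/\bar X_2$ are nonzero because $X_1,X_2\neq 0$; the inequality $\mathrm{Re}(g_{13})\leq 0$ becomes $\cos A\geq 0$, i.e.\ the range of $A$; the inequality $\mathrm{Re}(g_{24}\bar g_{14})\leq 0$ unwinds to $\mathrm{Re}(X_1 e^{-iA})\geq 0$; and $\det G=0$ is equivalent, by Proposition 2.7, to the defining equation of $\mathbb M$. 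Proposition 3.1 then produces a quadruple $p\in(\partial\ch{2})^4$ realizing $G$, and Proposition 2.6 yields $\tau(m(p))=(X_1,X_2,A)$.

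Finally, for the homeomorphism statement, $\tau$ is continuous because cross-ratios and Cartan's invariant are continuous on ordered quadruples of null lifts, and it therefore descends to a continuous map on the quotient $\mathcal M$. To produce a continuous inverse, I would use the explicit construction of Proposition 3.1 to build vectors $P_1,P_2,P_3,P_4$ depending continuously on $(X_1,X_2,A)\in \mathbb M$: fix $|z_2|,|w_2|$ by the non-negative real square roots of the expressions in equations (1)--(2), and determine the remaining phase from equation (3). The main obstacle I expect is the handling of the loci where $z_2=0$ or $w_2=0$, where the phase equation degenerates; here Corollary 3.2 guarantees that any local continuous resolution of the ambiguity produces the same point of $\mathcal M$, so that gluing local sections yields a globally continuous map $\mathbb M\to \mathcal M$ inverse to $\tau$.
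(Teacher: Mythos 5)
Your proposal is correct and follows essentially the same route as the paper: well-definedness via Propositions 3.2 and 2.8, injectivity via Proposition 2.6 and Corollary 2.1, and surjectivity by reconstructing the normalized Gram matrix from $(X_1,X_2,A)$ and invoking Proposition 3.1. The only difference is that you spell out the continuity of the inverse (including the degenerate loci $z_2=0$ or $w_2=0$) where the paper simply asserts that the homeomorphism claim is clear; your added care there is a genuine improvement in rigor but not a different argument.
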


\vspace{2mm}

\noindent {\bf Remark} We denote this set by $\mathbb M$ and call
$\mathbb M$ the {\sl moduli} space for $\mathcal M$.

\vspace{2mm}

\begin{proof} It follows from Proposition 3.2 and the formulae in
Proposition 2.8 that the map $ \tau $ above defines a map $ \tau :
\mathcal M \longrightarrow \mathbb M.$ First we show that the map $
\tau : \mathcal M \longrightarrow \mathbb M$ is surjective. Given
$X=(X_1, X_2, A)  \in \mathbb M$, we construct an Hermitian $4\times
4$-matrix $G=(g_{ij})$ as follows. Using the formulae in Proposition
2.6, we define $g_{13}$, $g_{14}$, $g_{24}$ in terms of $X_1, X_2,
A$, that is, we put
$$ \ \ \ g_{13}=-e^{-i A}, \ \ \  \ g_{14}=\frac{1}{\overline{X}_2}, \ \ \
g_{24}=-\frac{\overline{X}_1}{\overline{X}_2} \  \ e^{i A}.$$ Also
we put $g_{ii}=0$, $g_{12}=g_{23}=g_{34}=1.$ This defines $G$
completely.  Comparing the formulae in Propositions 2.3 and 2.4 with
those in Propositions 2.7 and 2.8, we see that $G$ satisfies all the
conditions in Proposition 3.1. By applying Proposition 3.1, we get
that $G$ is the normalized Gram matrix for some ordered quadruple
$p=(p_1,p_2,p_3,p_4)$ of distinct points of $\partial\ch{2}$. It is
readily seen from the formulae in Proposition 2.6 that $\tau (m(p))=
(X_1, X_2, A).$ This proves that $\tau$ is surjective. On the other
hand, it follows from Corollary 2.1 and Corollary 2.2 that $\tau$ is
injective. It is clear that $ \tau : \mathcal M \longrightarrow
\mathbb M$ is a homeomorphism provided that $\mathbb M$ is equipped
with the topology induced from $\mathbb C^{2} \times \mathbb R$.
This completes the proof of the theorem.
\end{proof}

\vspace{2mm}

\noindent {\bf Remark} \ In Section 3.2.5, we will show that the
equation
\begin{equation*}
\dps -2{\rm Re}(X_1+X_2) -2{\rm Re} (X_1\ \overline{X}_2\ e^{-\ i2
A})+ \mod {X_1}^2 +\mod {X_2}^2 +1=0,
\end{equation*}
and the strong inequalities $\dps -\pi /2 < A < \pi /2$ imply the
inequality ${\rm Re} (X_1 \ e^{-i A})\geq 0.$

\subsection{Some interesting subsets of $\mathbb M$ and the topology of the moduli space}

\subsubsection{Singular set of $\mathbb M$ and $\mathbb C$-plane
configurations}

The topology of the moduli space $\mathbb M$ is quite interesting.
To describe it, we define the following sets.

\vspace{3mm}

Let $\mathbb S$ be the set of points in $\mathbb C^{2} \times
\mathbb R,$ where the equation
\begin{equation*}
\dps -2{\rm Re}(X_1+X_2) -2{\rm Re} (X_1\ \overline{X}_2 \ e^{- \ i2
A})+ \mod {X_1}^2 +\mod {X_2}^2 +1=0
\end{equation*}
is satisfied. We call this set $\mathbb S$ the {\sl basic variety}.

\vspace{2mm}

Let also
\begin{itemize}
\item $\mathbb{S}_{123}=\{(X_1, X_2, A) \in \mathbb C^{2} \times \mathbb R: {\rm Re}(e^{i A}) =0\},$

\item $\mathbb{S}_{124}=\{(X_1, X_2, A) \in \mathbb C^{2} \times \mathbb R: {\rm Re}(
\overline{X}_1 \ e^{i A}) =0\},$

\item $\mathbb{S}_{134}=\{(X_1, X_2, A)\in \mathbb C^{2} \times \mathbb R:  {\rm Re}(
\overline{X}_2 \ e^{-iA}) = 0\},$

\item $\mathbb{S}_{234}=\{(X_1, X_2, A)\in \mathbb C^{2} \times \mathbb R: {\rm Re}(X_1 \
\overline{X}_2 \ e^{-iA})=0\}.$
\end{itemize}

\vspace{2mm}

It is easy to see that all the varieties $\mathbb{S}_{ijk}$ are not
singular. In fact, $\mathbb{S}_{ijk}$ is diffeomorphic to the
disjoint union of $4$-planes in $\mathbb C^{2} \times \mathbb R$.
For instance,
$$\mathbb{S}_{123}=\{(X_1, X_2, A) \in \mathbb C^{2} \times \mathbb R :
A=\pm \pi/2 +2k \pi, \ k \in \mathbb Z \}. $$

\vspace{2mm}

We call $\mathbb{S}_{ijk}$ the {\sl Cartan varieties}.

\vspace{2mm}

Let $p=(p_1,p_2,p_3,p_4)$ be an ordered quadruple of distinct points
of $\partial\ch{2}.$ We call $p$  a {\sl tetrahedron} with {\sl
vertices } $(p_1,p_2,p_3,p_4)$, and also we call the triples
$(p_i,p_j,p_k)$ the {\sl faces} of $p$.

\vspace{2mm}

We say that a tetrahedron $p$ is {\sl almost} $\mathbb C$-{\sl
plane} if there exists a face of $p$ which lies on a chain (we call
such a face $\mathbb C$-{\sl plane}). Also, we define a tetrahedron
$p$ to be $\mathbb C$-{\sl plane} if all of its vertices are in a
chain.

\vspace{2mm} Given tetrahedron $p$, let $m(p) \in \mathcal M$ be the
point represented by $p$. Then we have the following description of
almost $\mathbb C$-plane and $\mathbb C$-plane tetrahedra.
\begin{prop} A tetrahedron $p$ is almost $\mathbb C$-plane if and only
if $\tau (m(p))$ belongs to some Cartan's variety. Moreover, a
tetrahedron $p$ is $\mathbb C$-plane if and only if $\tau (m(p))$
belongs to the intersection of at least two Cartan's varieties.
\end{prop}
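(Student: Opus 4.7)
The plan is to translate the defining conditions of the Cartan varieties $\mathbb{S}_{ijk}$ into statements about chain-incidence of the four faces, via the determinant formulas for the $3\times 3$ principal submatrices of the normalized Gram matrix collected in Proposition 2.8. Comparing those four formulas with the definitions of $\mathbb{S}_{123}, \mathbb{S}_{124}, \mathbb{S}_{134}, \mathbb{S}_{234}$, one reads off that for each index triple $(i,j,k)$,
\[
\tau(m(p)) \in \mathbb{S}_{ijk} \iff \det G(i,j,k)=0,
\]
the extra factor $1/|X_2|^2$ appearing in three of the formulas being strictly positive and therefore harmless. Combining this with the Remark following Proposition 2.4, which says that $\det G(i,j,k)=0$ if and only if the triple $(p_i,p_j,p_k)$ lies on a chain, gives the dictionary: $\tau(m(p))\in\mathbb{S}_{ijk}$ precisely when the face $(p_i,p_j,p_k)$ is $\mathbb{C}$-plane.

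With this dictionary the first assertion is immediate: $p$ is almost $\mathbb{C}$-plane iff at least one of its four faces lies on a chain iff $\tau(m(p))$ lies in at least one Cartan variety.

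For the \emph{moreover} part, first I would note the easy direction: if $p$ is $\mathbb{C}$-plane, then every one of its four faces already lies on the common chain containing all four vertices, so $\tau(m(p))$ lies in all four Cartan varieties, hence certainly in at least two of them. For the converse, suppose $\tau(m(p)) \in \mathbb{S}_{ijk}\cap \mathbb{S}_{i'j'k'}$ for two distinct index triples. Then two distinct faces $F$ and $F'$ of the tetrahedron each lie on some chain. Any two $3$-subsets of $\{1,2,3,4\}$ intersect in exactly two elements, so $F$ and $F'$ share two common vertices $p_a, p_b$. These two distinct boundary points determine a unique complex geodesic of $\ch{2}$, hence a unique chain, and the chain containing $F$ and the one containing $F'$ must therefore coincide. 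All four vertices of $p$ thus lie on this common chain, so $p$ is $\mathbb{C}$-plane.

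The only nontrivial input beyond the algebraic dictionary is the uniqueness of the chain through two distinct points of $\partial\ch{n}$, which I would simply invoke as a standard fact (two distinct isotropic lines in $V^{n,1}$ span a unique $2$-dimensional complex subspace of signature $(1,1)$, whose projectivization is the unique complex geodesic joining them). Everything else reduces to elementary combinatorics on the four $3$-subsets of $\{1,2,3,4\}$, and I do not foresee any substantive obstacle.
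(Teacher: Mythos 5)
Your proof is correct and follows essentially the same route as the paper: the dictionary $\tau(m(p))\in\mathbb{S}_{ijk}\iff\det G(i,j,k)=0\iff(p_i,p_j,p_k)$ lies on a chain (via Proposition 2.8 and the Remark after Proposition 2.4), plus the fact that two chains sharing two points coincide. The paper states these steps more tersely; you have merely filled in the same details.
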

\begin{proof} The first assertion is obvious because of Proposition 2.8.
The second one follows from the fact that if two chains intersect in
at least two points then they coincide. In particular, a tetrahedron
is $\mathbb C$-plane if and only if it has at least two $\mathbb
C$-plane faces.
\end{proof}

\vspace{2mm}

Let $\mathcal C$ be the set of the points in $\mathbb M$ which
represent $\mathbb C$-plane tetrahedra. In what follows, we will
describe the singular set of $\mathbb S$ when $-\pi/2 \leq  A
 \leq \pi/2$. Also, we will find a
relation between $\mathcal C$ and the singular set of $\mathbb M.$

\vspace{2mm}

\begin{prop}
The basic variety $\mathbb S$ has no singular points for $-\pi/2 < A
 < \pi/2$ and $X_1 \neq 0, \ X_2 \neq 0$.
\end{prop}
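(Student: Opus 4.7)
The plan is to verify directly that the gradient of the defining function of $\mathbb{S}$ never vanishes on the stated open region. Write
\[
F(X_1, X_2, A) = -2\,{\rm Re}(X_1 + X_2) - 2\,{\rm Re}(X_1 \overline{X}_2 e^{-i2A}) + |X_1|^2 + |X_2|^2 + 1,
\]
so that $\mathbb{S} = F^{-1}(0)$ and a point of $\mathbb{S}$ is singular precisely when all partial derivatives of $F$ vanish there. Since $F$ is real-valued, it is convenient, via Wirtinger calculus, to impose the simultaneous vanishing of $\partial F/\partial \overline{X}_1$, $\partial F/\partial \overline{X}_2$, and $\partial F/\partial A$; the conjugate equations are then automatic.

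A direct computation will give
\[
\frac{\partial F}{\partial \overline{X}_1} = X_1 - X_2 e^{i2A} - 1, \qquad \frac{\partial F}{\partial \overline{X}_2} = X_2 - X_1 e^{-i2A} - 1,
\]
together with $\partial F/\partial A = -4\,{\rm Im}(X_1 \overline{X}_2 e^{-i2A})$. Setting the first two equal to zero yields $X_1 = 1 + X_2 e^{i2A}$ and $X_2 = 1 + X_1 e^{-i2A}$. The plan is then to substitute the former into the latter: the $X_2$ terms cancel and one is left with $1 + e^{-i2A} = 0$, i.e.\ $e^{-i2A} = -1$, forcing $2A \equiv \pi \pmod{2\pi}$. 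Since the hypothesis restricts $A$ to the open interval $(-\pi/2, \pi/2)$, this is impossible, so no singular point of $\mathbb{S}$ can lie in the region considered.

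There is no genuine obstacle: the Wirtinger differentiation is routine, and the elimination step is a single line. The hypothesis $X_1 \neq 0$, $X_2 \neq 0$ is not actually used to rule out interior singular points; it merely situates us in the ambient space $\mathbb{C}_{*}^{2} \times \mathbb{R}$ in which $\mathbb{S}$ is being considered, and what really drives the argument is the strictness of the bound on $A$. It is worth noting, looking ahead, that at the boundary values $A = \pm \pi/2$ the vanishing conditions on the three partial derivatives no longer collapse, which suggests that genuine singular points of $\mathbb{S}$ will appear precisely on the Cartan variety $\mathbb{S}_{123}$, consistent with the degeneration described in Proposition 3.3.
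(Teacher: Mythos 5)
Your proof is correct, and it takes a genuinely slicker route than the paper's. The paper writes $X_1=a+bi$, $X_2=c+di$, expands $F$ in the five real variables $a,b,c,d,A$, and then rules out common zeros of the five partials by a case analysis on whether $a$ or $b$ vanishes (two of the cases being ``left to the reader''), finishing with a linear-combination trick involving equations $(1)$, $(3)$ and $(5)$. You instead package the four real derivatives in $X_1,X_2$ into the two Wirtinger derivatives
\[
\frac{\partial F}{\partial \overline{X}_1}=X_1-X_2e^{i2A}-1,\qquad
\frac{\partial F}{\partial \overline{X}_2}=X_2-X_1e^{-i2A}-1,
\]
(which I have checked agree with the paper's real system), and a one-line elimination gives $1+e^{-i2A}=0$, impossible for $-\pi/2<A<\pi/2$. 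This buys several things: no case analysis, no need for the $A$-derivative or the constraint $F=0$, and the observation --- which you correctly make explicit --- that the hypothesis $X_1\neq 0$, $X_2\neq 0$ is not actually needed for the nonvanishing of the gradient, only the strict bound on $A$. Your argument also makes transparent why singularities must appear exactly at $A=\pm\pi/2$, consistent with Proposition 3.5. The only point worth stating carefully (you do gesture at it) is that nonvanishing of the gradient of the defining function is what is needed to conclude smoothness via the implicit function theorem; that is precisely the direction your computation establishes, so the proof is complete.
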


\vspace{2mm}
\begin{proof}
We write $X_1 = a +bi$, $X_2 = c +di$, $e^{-i2 A} = \cos{2A} - i
\sin{2A}.$ Then easy calculations show that $\mathbb S$ in terms of
$a,b,c,d,A$ is given by the following equation
$$F(a,b,c,d,A) = -2(a+c) -2[ (ac +bd) \cos{2A} + \ (bc-ad) \sin{2A} ] +
a^2 + b^2 +c^2 + d^2 +1 = 0.$$

Computing the partial derivatives of $F$, we get the following
system for finding singular points of $\mathbb S.$

\vspace{2mm}

$(1) \ \frac{\partial{F}}{\partial{a}}=0 \  \Longleftrightarrow  \
-1 -c \cos{2A} + d \sin{2A} + a =0,$

\vspace{2mm}

$(2) \ \frac{\partial{F}}{\partial{c}}=0 \ \Longleftrightarrow \ -1
- a \cos{2A} - b \sin{2A} + c =0,$

\vspace{2mm}

 $(3) \ \frac{\partial{F}}{\partial{b}}=0 \
\Longleftrightarrow \  - d \cos{2A} - c \sin{2A} + b =0,$

\vspace{2mm}

$(4) \ \frac{\partial{F}}{\partial{d}}=0 \ \Longleftrightarrow \  -
b \cos{2A} + a \sin{2A} + d =0,$

\vspace{2mm}

$(5) \ \frac{\partial{F}}{\partial{A}}=0 \ \Longleftrightarrow \
  (ac +bd) \sin{2A} -(bc-ad) \cos{2A} =0.$
\vspace{2mm}

Since $X_1 \neq 0$ and $X_2 \neq 0$, we have that $a^2 + b^2 \neq 0$
and $c^2 + d^2 \neq 0.$ We consider the following cases: $a=0, b\neq
0$; $a \neq 0, b=0$; $a\neq 0, b\neq 0.$ In the two first cases, the
equations become very simple, and a straightforward verification,
which is left to the reader, shows that the system above has no
solutions for $-\pi/2 < A <\pi/2.$ So, in these  cases $\mathbb S$
has no singular points.

Let us assume now that $a\neq 0, b\neq 0.$ By multiplying the first
equation by $b$ and the third equation by $(-a),$ and then summing
the resulting equations, we get that $(ac +bd) \sin{2A} -(bc-ad)
\cos{2A} -b =0.$ Now, if equation $(5)$ does not hold, then the
gradient of $F$ is not the null vector. If equation $(5)$ holds,
then the above implies $b=0$, that contradicts our assumption. This
completes the proof of the proposition.
\end{proof}

\vspace{2mm}

Next we show that $\mathbb S$ has singular points for $ A = \pm
\pi/2 $ and describe the singular set of $\mathbb M$.

\begin{prop} The basic variety $\mathbb S$ has singular points for $ A = \pm
\pi/2.$
\end{prop}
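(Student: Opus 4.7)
The plan is to revisit the explicit polynomial form $F(a,b,c,d,A)$ from the proof of the previous proposition and to solve the singular-point system $\nabla F = 0$ at $A = \pm \pi/2$, rather than attempting a coordinate-free argument. The previous proposition derived the system
\begin{align*}
&\tfrac{\partial F}{\partial a}:\ a = 1 + c\cos 2A - d\sin 2A, \qquad \tfrac{\partial F}{\partial c}:\ c = 1 + a\cos 2A + b\sin 2A,\\
&\tfrac{\partial F}{\partial b}:\ b = d\cos 2A + c\sin 2A, \qquad \tfrac{\partial F}{\partial d}:\ d = b\cos 2A - a\sin 2A,\\
&\tfrac{\partial F}{\partial A}:\ (ac+bd)\sin 2A - (bc-ad)\cos 2A = 0,
\end{align*}
and my strategy is simply to plug in the boundary values of $A$.

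First I would take $A = \pi/2$; the case $A = -\pi/2$ is identical because $\cos(-\pi)=\cos\pi=-1$ and $\sin(-\pi)=\sin\pi=0$. At these values $\cos 2A = -1$ and $\sin 2A = 0$, so equations $(1)$ and $(2)$ both collapse to $a + c = 1$, equations $(3)$ and $(4)$ both collapse to $b + d = 0$, and equation $(5)$ becomes $bc - ad = 0$. Substituting $d = -b$ into the last equation yields $bc + ab = b(a+c) = b = 0$, hence $b = d = 0$ and $a + c = 1$. This gives a one-parameter family of critical points of $F$, namely $(a,0,1-a,0,\pi/2)$ for $a \in \mathbb{R}$.

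Next I would check that these critical points actually lie on $\mathbb{S}$, which is what is needed to qualify them as singular points of $\mathbb{S}$ rather than merely of $F$. At $A = \pi/2$ the equation specializes to
\begin{equation*}
F(a,b,c,d,\pi/2) = -2(a+c) + 2(ac+bd) + a^2+b^2+c^2+d^2+1 = (a+c-1)^2 + (b+d)^2,
\end{equation*}
so along the candidate line $b=d=0$, $a+c=1$ we have $F \equiv 0$ automatically (in fact $F$ vanishes there to second order, which is precisely why the gradient vanishes too). Restricting to $a \neq 0, 1$ keeps these points inside $\mathbb{C}_{*}^{2} \times \mathbb{R}$, and the analogous family at $A = -\pi/2$ is obtained in the same way. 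This produces the desired singular points of $\mathbb{S}$.

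I do not anticipate a real obstacle here: the critical simplification is that $\sin 2A$ vanishes at $A = \pm \pi/2$, which decouples the first four partial-derivative equations into two pairs, and makes the fifth equation linear in $b$ after $d = -b$ is substituted. The only subtlety worth flagging explicitly is the distinction between a critical point of the defining function $F$ and a singular point of the zero set $\mathbb{S}$, which is resolved by the factorization $F|_{A=\pi/2} = (a+c-1)^2+(b+d)^2$.
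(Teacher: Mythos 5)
Your proposal is correct and follows essentially the same route as the paper: substitute $A=\pm\pi/2$ into the gradient system from the previous proposition to find the critical locus $a+c=1$, $b=d=0$, and then use the factorization $F|_{A=\pm\pi/2}=(a+c-1)^2+(b+d)^2$ to confirm these points lie on $\mathbb S$. Your explicit remark distinguishing critical points of $F$ from singular points of its zero set is a welcome clarification, but the substance matches the paper's argument.
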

\begin{proof}
Substituting $ A = \pm \pi/2$ into equations $(1)-(5)$, it is
readily seen that the system has solutions if and only if $a+c=1$
and $b=d=0.$ On the other hand, using the equation from Proposition
3.4, we have that $\mathbb S$ is given for  $ A = \pm \pi/2$ by the
following equation
$$F(a,b,c,d) = -2(a+c) +2(ac +bd) + a^2 + b^2 +c^2 + d^2 +1 = 0.$$
Rearranging this gives the equation
$$ (a+c -1)^2 +(b+d)^2 =0.$$
Thus, for $ A = \pm \pi/2$, \  $\mathbb S$ is defined by the
following system of equations
$$ a+c = 1, \ \ \ \  b+d =0.$$
All this implies that in all the points of $\mathbb S$, where $ A =
\pm \pi/2$, $a+c=1$ and $b=d=0$, the gradient of $F$ is the null
vector, and, therefore, all these points are singular points of
$\mathbb S$.
\end{proof}

\vspace{2mm}

As a corollary of the proof of Proposition 3.5, we obtain the
following
\begin{prop}
The intersection of the Cartan variety $\mathbb{S}_{123}$ with the
basic variety $\mathbb S$ is not transversal. This intersection is
the union of $2$-dimensional real analytic varieties. In the
coordinates above, it is given by the equations
$$a+c = 1, \ b+d =0,  \ A = \pm \pi/2 +2k \pi.$$
\end{prop}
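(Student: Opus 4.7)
The plan is to reduce everything to the computation already performed in the proof of Proposition 3.5. The Cartan variety $\mathbb{S}_{123}$ is the disjoint union of the affine hyperplanes $A=\pm\pi/2+2k\pi$ in $\mathbb{C}^{2}\times\mathbb{R}$, each of which is a smooth real analytic submanifold of real codimension one. To describe $\mathbb{S}\cap\mathbb{S}_{123}$, I would simply restrict the defining equation $F(a,b,c,d,A)=0$ of $\mathbb{S}$ to each such hyperplane.

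Substituting $A=\pm\pi/2+2k\pi$ gives $\cos 2A=-1$ and $\sin 2A=0$, so the restricted equation collapses to
\begin{equation*}
-2(a+c)-2\bigl(-(ac+bd)\bigr)+a^{2}+b^{2}+c^{2}+d^{2}+1=(a+c-1)^{2}+(b+d)^{2}=0,
\end{equation*}
exactly as in the proof of Proposition 3.5. Hence on each such hyperplane the intersection is cut out by the real analytic equations $a+c=1$, $b+d=0$, and is an affine plane of real dimension two parameterized by $(a,b)$. Taking the union over $k$ and over the two choices of sign yields the claimed description.

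For non-transversality, I would give a dimension-count argument and then strengthen it. The ambient manifold $\mathbb{C}^{2}\times\mathbb{R}$ has real dimension five; the basic variety $\mathbb{S}$ and the Cartan variety $\mathbb{S}_{123}$ are both of real dimension four away from their singular loci. Transversality would force the intersection to have dimension $4+4-5=3$, but the explicit description above shows it has dimension two. More pointedly, the computation in the proof of Proposition 3.5 shows that at every point of $\mathbb{S}\cap\mathbb{S}_{123}$ the gradient $\nabla F$ vanishes, so $\mathbb{S}$ is singular along the whole intersection, and the usual transversality condition cannot even be formulated in the smooth sense along that locus.

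The main point to highlight, rather than any technical obstacle, is that $\mathbb{S}\cap\mathbb{S}_{123}$ coincides with the singular locus of $\mathbb{S}$ on the slices $A=\pm\pi/2+2k\pi$: the failure of transversality is not incidental, but is forced by the fact that the hyperplane $\mathbb{S}_{123}$ meets $\mathbb{S}$ exactly where the quadratic in $(a,b,c,d)$ obtained by restricting $F$ degenerates to a sum of two squares.
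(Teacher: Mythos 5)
Your derivation of the defining equations is exactly the paper's: the proposition is stated there as a corollary of the proof of Proposition 3.5, where the same substitution $\cos 2A=-1$, $\sin 2A=0$ reduces $F$ to $(a+c-1)^2+(b+d)^2$. Your dimension count for non-transversality (a transversal intersection of two $4$-dimensional varieties in the $5$-dimensional ambient space would be $3$-dimensional, whereas this intersection is $2$-dimensional) is valid and in fact more explicit than what the paper offers.

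However, your ``more pointed'' claim is false: it is not true that $\nabla F$ vanishes at every point of $\mathbb S\cap\mathbb S_{123}$, and the intersection does not coincide with the singular locus of $\mathbb S$ on the slices $A=\pm\pi/2+2k\pi$. At $A=\pm\pi/2$ the four conditions $\partial F/\partial a=\partial F/\partial b=\partial F/\partial c=\partial F/\partial d=0$ do reduce to $a+c=1$, $b+d=0$, but the fifth condition $\partial F/\partial A=0$ becomes $bc-ad=0$, and together with $c=1-a$ and $b=-d$ this gives $bc-ad=-d(a+c)=-d$, forcing $b=d=0$. So the singular locus of $\mathbb S$ on these slices is the $1$-dimensional set $\{\,a+c=1,\ b=d=0\,\}$, a proper subset of the $2$-dimensional intersection; this is what Proposition 3.5 actually asserts, and it is consistent with Corollary 3.3 (the singular set is the intersection of \emph{all four} varieties $\mathbb S_i$) and with the paper's later remark that the singular set of $\mathbb M$ is $1$-dimensional. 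The correct pointwise statement is weaker but still suffices for non-transversality: on all of $\mathbb S\cap\mathbb S_{123}$ the $(a,b,c,d)$-components of $\nabla F$ vanish, so $dF$ is proportional to $dA$; hence wherever $\mathbb S$ is smooth its tangent hyperplane coincides with that of $\mathbb S_{123}$, and at the remaining points $\mathbb S$ is singular --- in either case transversality fails. You should replace the last paragraph of your argument with this observation (or simply rely on your dimension count).
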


\vspace{2mm}

Using calculations similar to those in Proposition 3.5,
we get the following

\begin{prop}
The intersection of the Cartan variety $\mathbb{S}_{124}$ with the
basic variety $\mathbb S$ is not transversal. This intersection is
the union of $2$-dimensional real analytic varieties. In the
coordinates above, it is given by the equations
$$ a + b\tan A = 0, \   a+c = 1, \ b-d =0,$$
provided that $ A \neq \pm \pi/2 +2k \pi$, and by the equations
$$a+c=1, \ \ b=0, \ \ d=0,$$
provided that $ A = \pm \pi/2 +2k \pi.$
\end{prop}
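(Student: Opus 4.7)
The plan is to write $\mathbb{S}_{124}$ and $\mathbb{S}$ in the real coordinates $X_1=a+bi$, $X_2=c+di$, $A$ already introduced in the proof of Proposition 3.5, then split into the two cases in the statement. First, the defining condition ${\rm Re}(\overline X_1 e^{iA})=0$ of $\mathbb{S}_{124}$ expands to
\begin{equation*}
a\cos A + b\sin A = 0.
\end{equation*}
When $A\neq\pm\pi/2+2k\pi$, $\cos A\neq 0$ and I would rewrite this as $a+b\tan A=0$; when $A=\pm\pi/2+2k\pi$, $\cos A=0$ and $\sin A=\pm 1$, so the condition collapses to $b=0$, which together with the description of $\mathbb{S}$ at $A=\pm\pi/2+2k\pi$ obtained in the proof of Proposition 3.5 (namely $a+c=1$ and $b+d=0$) immediately yields $a+c=1$, $b=0$, $d=0$. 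This settles the second clause with essentially no work.

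The core of the argument is the first case. I would substitute $a=-b\tan A$ into $F(a,b,c,d,A)=0$ (the expression for $\mathbb S$ displayed in the proof of Proposition 3.5) and simplify. The key identities that make the mess collapse are
\begin{equation*}
\cos 2A + \tan A\,\sin 2A = 1, \qquad \sin 2A - \tan A\,\cos 2A = \tan A,
\end{equation*}
both of which follow immediately from the double-angle formulas. Using them the bracketed quantity $(ac+bd)\cos 2A+(bc-ad)\sin 2A$ reduces, after grouping in $c$ and $d$, to $b(d+c\tan A)$. Also $a^2+b^2=b^2\sec^2 A$, so $b^2+b^2\tan^2 A - b^2 = b^2\tan^2 A = a^2$. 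After rearranging, the equation $F=0$ becomes a sum of two squares
\begin{equation*}
(c - 1 + a)^2 + (d-b)^2 = 0,
\end{equation*}
which forces exactly $a+c=1$ and $b-d=0$, as required. Combined with $a+b\tan A=0$, this gives the system in the statement.

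Finally, I would check the two remaining assertions. For the dimension claim, the system $a+b\tan A=0$, $a+c=1$, $b-d=0$ (resp. $a+c=1$, $b=0$, $d=0$ at $A=\pm\pi/2+2k\pi$) is three independent real-analytic equations in the five real coordinates $(a,b,c,d,A)$, so the solution set is a $2$-dimensional real-analytic variety (a union of such pieces as $A$ varies). For non-transversality, transverse intersection of the two hypersurfaces $\mathbb{S}$ and $\mathbb{S}_{124}$ in $\mathbb{R}^5$ would produce a $3$-dimensional set; getting only a $2$-dimensional one already proves the intersection is nowhere transverse. I expect the only delicate step to be the trigonometric collapse in the first case; once the two identities above are spotted the calculation is routine, and the second case is essentially free from Proposition 3.5.
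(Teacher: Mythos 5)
Your proposal is correct and is precisely the computation the paper leaves implicit (it prints no proof, saying only ``Using calculations similar to those in Proposition 3.5''): expanding ${\rm Re}(\overline{X}_1e^{iA})=a\cos A+b\sin A$, substituting $a=-b\tan A$ into $F$, and using your two double-angle identities does collapse $F$ to $(a+c-1)^2+(b-d)^2$, while the $\cos A=0$ case follows from the description of $\mathbb S$ at $A=\pm\pi/2$ in Proposition 3.5. The only slip is in the final dimension count: at $A=\pm\pi/2+2k\pi$ the coordinate $A$ is itself fixed, so that piece is a $1$-dimensional line (contained in the closure of the $2$-dimensional branches), not a $2$-dimensional variety; this does not affect the proposition.
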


Also, we have
\begin{prop}
The intersection of the Cartan variety $\mathbb{S}_{134}$ with the
basic variety $\mathbb S$ is not transversal. This intersection is
the union of $2$-dimensional real analytic varieties. In the
coordinates above, it is given by the equations
$$ c - d\tan A = 0, \   a+c = 1, \ b-d =0,$$
provided that $ A \neq \pm \pi/2 +2k \pi$, and by the equations
$$a+c=1, \ \ b=0, \ \ d=0,$$
provided that $ A = \pm \pi/2 + 2k \pi.$
\end{prop}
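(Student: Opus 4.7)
The argument runs in parallel with the one for Proposition~3.7. First, I would translate the defining condition of $\mathbb{S}_{134}$ into the real coordinates $a,b,c,d,A$. Writing $X_2 = c+di$ and $e^{-iA}=\cos A - i\sin A$, one computes
$${\rm Re}(\overline{X}_2\, e^{-iA}) = c\cos A - d\sin A,$$
so that $\mathbb{S}\cap\mathbb{S}_{134}$ is the common zero locus of $F(a,b,c,d,A)$ (from the proof of Proposition~3.5) and of $G(a,b,c,d,A):=c\cos A - d\sin A$. The plan is to substitute $G=0$ into $F=0$, simplify, and read off the intersection in each of the two cases of the proposition.

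The main step is the generic case $A\neq \pm\pi/2+2k\pi$, where $G=0$ rewrites as $c=d\tan A$. Substituting this into $F$, the crucial collapse uses the identities
$$\tan A\,\cos 2A - \sin 2A \;=\; -\tan A,\qquad \cos 2A + \tan A\,\sin 2A \;=\; 1,$$
which follow respectively from $\sin(A-2A)=-\sin A$ and $\cos(2A-A)=\cos A$ after dividing by $\cos A$. Together with $\sec^2 A - 1 = \tan^2 A$, these let the mixed terms in $F$ recombine into a perfect square, giving
$$F\big|_{c=d\tan A} \;=\; \bigl((a-1)+d\tan A\bigr)^2 + (b-d)^2 \;=\; (a+c-1)^2 + (b-d)^2.$$
Hence under $G=0$, the equation $F=0$ is equivalent to $a+c=1$ and $b-d=0$, producing the first system together with $c - d\tan A = 0$. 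In the degenerate case $A=\pm\pi/2+2k\pi$, one has $\cos A=0$ and $\sin A=\pm 1$, so $G=0$ forces $d=0$; combining this with Proposition~3.5's description of $\mathbb{S}$ at $A=\pm\pi/2$, namely $a+c=1$ and $b+d=0$, yields the second system $a+c=1$, $b=0$, $d=0$.

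Finally, non-transversality drops out of the sum-of-squares form: on the hypersurface $\{G=0\}$ the function $F$ is non-negative and attains its zero minimum precisely along the 2-dimensional locus just described, so $\nabla F$ is necessarily a scalar multiple of $\nabla G$ at every intersection point, and the intersection fails to be transverse. I expect the only genuine obstacle to be executing the trigonometric collapse in the generic case cleanly; once the two identities above are in hand, the rest is bookkeeping entirely analogous to Propositions~3.5 and~3.7.
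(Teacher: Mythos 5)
Your computation is correct and follows exactly the route the paper intends: the paper offers no explicit proof of this proposition, appealing only to ``calculations similar to those in Proposition 3.5,'' and your substitution $c=d\tan A$ with the collapse $F\vert_{\{c=d\tan A\}}=(a+c-1)^2+(b-d)^2$ (together with the $\cos A=0$ case reducing to $d=0$, $a+c=1$, $b=0$) is precisely that calculation carried out. Your Lagrange-multiplier observation that a sum-of-squares minimum forces $\nabla F\parallel\nabla G$ along the intersection is a clean way to record the non-transversality and is consistent with the dimension count (codimension $3$ instead of the expected $2$) implicit in the paper.
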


\begin{prop}
The intersection of the Cartan variety $\mathbb{S}_{234}$ with the
basic variety $\mathbb S$ is not transversal. This intersection is
the union of  $2$-dimensional real analytic varieties. In the
coordinates above, it is given by the equations
$$ (ac+bd) + (bc-ad) \tan A = 0, \   a+c = 1, \ b+d =0,$$
provided that $ A \neq \pm \pi/2 + 2k\pi$, and by the equations
$$a+c=1, \ \ b=0, \ \ d=0,$$
provided that $ A = \pm \pi/2 + 2k\pi.$
\end{prop}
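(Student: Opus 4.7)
The plan is to parallel Propositions 3.6--3.9 by producing a single algebraic identity expressing the defining function $F$ of $\mathbb{S}$ as a sum of two squares plus a multiple of the equation defining $\mathbb{S}_{234}$. Writing $X_1 = a+bi$ and $X_2 = c+di$, set $\alpha = ac+bd$ and $\beta = bc-ad$, so that ${\rm Re}(X_1\overline{X}_2 e^{-iA}) = \alpha\cos A + \beta\sin A$. Hence $\mathbb{S}_{234}$ is cut out by
$$G := \alpha\cos A + \beta\sin A = 0,$$
which for $\cos A \neq 0$ is exactly the claimed equation $(ac+bd) + (bc-ad)\tan A = 0$.

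The heart of the argument is the identity
$$F = (a+c-1)^2 + (b+d)^2 - 4G\cos A,$$
valid on all of $\mathbb{R}^5$. To derive it, apply $\cos 2A = 2\cos^2 A - 1$ and $\sin 2A = 2\sin A\cos A$ to obtain
$$-2\alpha\cos 2A - 2\beta\sin 2A = 2\alpha - 4\cos A\,(\alpha\cos A + \beta\sin A) = 2\alpha - 4G\cos A,$$
and then substitute into the formula $F = -2(a+c) - 2\alpha\cos 2A - 2\beta\sin 2A + a^2+b^2+c^2+d^2+1$; the surviving pieces assemble exactly into $(a+c-1)^2 + (b+d)^2$. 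From this identity the description of the intersection is immediate: on $\{G = 0\}$, $F = 0$ is equivalent to $a+c = 1$ and $b+d = 0$, so the intersection is the codimension-$3$ (hence dimension-$2$) real analytic variety $\{a+c = 1,\ b+d = 0,\ G = 0\}$. For $A = \pm\pi/2 + 2k\pi$, substituting $b = -d$ and $a = 1-c$ into $G = \beta\sin A$ collapses $G = 0$ to $d = 0$, forcing $b = 0$ as well and recovering the degenerate description.

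Non-transversality falls out of the same identity by differentiation: at any intersection point, where $a+c-1 = b+d = G = 0$, the chain rule gives $\nabla F = -4\cos A\,\nabla G$, so $\nabla F$ and $\nabla G$ are linearly dependent. When $\cos A = 0$, $\nabla F$ in fact vanishes, consistent with Proposition 3.5 since such points are singular points of $\mathbb{S}$ itself. I expect the only delicate step is spotting the identity $-2\alpha\cos 2A - 2\beta\sin 2A = 2\alpha - 4G\cos A$; once this is in hand, the proof reduces to the completing-the-square trick already used for Propositions 3.6--3.9.
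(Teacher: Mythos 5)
Your proof is correct, and the key identity checks out: with $\alpha=ac+bd$, $\beta=bc-ad$ and $G=\alpha\cos A+\beta\sin A$ one indeed has $F=(a+c-1)^2+(b+d)^2-4G\cos A$ identically, from which both the description of $\mathbb S\cap\mathbb S_{234}$ and the non-transversality ($\nabla F=-4\cos A\,\nabla G$ on the intersection) follow at once. The paper gives no explicit argument for this proposition; it only appeals to ``calculations similar to those in Proposition 3.5,'' where the completing-the-square rearrangement $F=(a+c-1)^2+(b+d)^2$ is carried out solely in the slice $A=\pm\pi/2$, and the implicit expectation is that one redoes the gradient computation for each Cartan variety separately. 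What your version buys is a single global identity valid for all $A$ that simultaneously produces the equations of the intersection (no extra components, since on $\{G=0\}$ the sum of two squares must vanish), the degenerate form at $\cos A=0$ (where $G=0$ together with $a+c=1$, $b+d=0$ collapses to $b=d=0$), and the proportionality of the two gradients; it also explains transparently why the points with $\cos A=0$ are exactly the singular points of $\mathbb S$ found in Proposition 3.5. The only minor point worth stating explicitly is that the three equations $a+c=1$, $b+d=0$, $G=0$ are independent away from the degenerate locus, so the intersection really is $2$-dimensional as claimed; your parametrization remark essentially covers this.
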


Now let
$$\mathbb{S}_1 = \mathbb S \cap \mathbb{S}_{123}, \ \ \mathbb{S}_2 = \mathbb S \cap
\mathbb{S}_{124}, \ \  \mathbb{S}_3 = \mathbb S \cap
\mathbb{S}_{134}, \ \ \mathbb{S}_4 = \mathbb S \cap
\mathbb{S}_{234}.$$
Thus,  we get that $\mathbb{S}_i$ is a
$2$-dimensional real analytic variety, and, moreover, we have the
following
\begin{co}
The intersection of all $\mathbb{S}_i$ is exactly the singular set
of the basic variety $\mathbb S.$
\end{co}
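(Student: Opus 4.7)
The plan is to compute both sets explicitly using Propositions 3.5--3.10 and observe they coincide. Writing $X_1 = a+bi$ and $X_2 = c+di$, both sets will turn out to equal
$$ \Sigma \;=\; \{(X_1, X_2, A) \in \C_{*}^{2} \times \R : a+c = 1,\ b = 0,\ d = 0,\ A = \pm\pi/2 + 2k\pi\}. $$

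For the inclusion $\bigcap_{i=1}^{4} \mathbb{S}_i \subseteq \Sigma$, I would use Proposition 3.7 first: membership in $\mathbb{S}_1$ already forces $A = \pm\pi/2 + 2k\pi$, so the entire intersection is contained in this discrete locus. There, Propositions 3.8, 3.9 and 3.10 each collapse to their degenerate branch $\{a+c=1,\ b=0,\ d=0\}$, and the constraint $b+d=0$ coming from $\mathbb{S}_1$ is automatic. The reverse inclusion $\Sigma \subseteq \bigcap_{i=1}^{4} \mathbb{S}_i$ is then immediate from the same descriptions. Hence $\bigcap_{i=1}^{4} \mathbb{S}_i = \Sigma$.

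For the singular set of $\mathbb{S}$, Proposition 3.5 rules out singularities in the open strip $-\pi/2 < A < \pi/2$ with $X_1, X_2 \neq 0$. Because the defining function $F(a,b,c,d,A)$ depends on $A$ only through $e^{-i2A}$, it is $\pi$-periodic in $A$, so the same conclusion propagates to every open interval $(-\pi/2+k\pi,\,\pi/2+k\pi)$. On the complementary discrete slice $A = \pm\pi/2 + 2k\pi$, the computation performed in the proof of Proposition 3.6 shows that the simultaneous vanishing of the five partial derivatives of $F$ is equivalent to $a+c=1$ and $b=d=0$, and every such point does lie on $\mathbb{S}$. Therefore the singular set of $\mathbb{S}$ is exactly $\Sigma$, and comparison with the previous step completes the proof.

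The only place where a slip would be easy is the reduction of Propositions 3.8--3.10 to their degenerate branches at $A = \pm\pi/2 + 2k\pi$; fortunately this is recorded explicitly in those propositions, so the whole argument is a direct comparison of already-computed formulas rather than a new calculation. No further obstacle is anticipated.
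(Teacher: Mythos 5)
Your argument is correct and is exactly the route the paper intends: the corollary is stated there without a separate proof, as an immediate assembly of the preceding propositions (the computation of each $\mathbb{S}_i$ at $A=\pm\pi/2+2k\pi$, the non-singularity statement on the open strip, and the identification of the singular locus $a+c=1$, $b=d=0$ at $A=\pm\pi/2$), which is precisely what you carry out. The only blemishes are cosmetic: your proposition numbers are shifted by one relative to the paper's (the $\mathbb{S}_{123}$ intersection is its Proposition 3.6, the singular-point computation at $A=\pm\pi/2$ is its Proposition 3.5), and, like the paper, you quietly work in $\mathbb{C}_{*}^{2}\times\mathbb{R}$ so that the non-vanishing hypothesis $X_1,X_2\neq 0$ in the smoothness statement is automatic.
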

\begin{co}
The set $\mathcal C$ is equal to the singular set of the moduli
space $\mathbb M$.
\end{co}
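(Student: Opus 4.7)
The plan is to combine Proposition 3.3, Corollary 3.1, and the explicit computations of Propositions 3.4 and 3.5 in a three-step reduction.

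First, I would upgrade the ``at least two'' characterization in Proposition 3.3 to ``all four''. In a tetrahedron, any two faces share exactly two vertices, and two distinct chains of $\partial\ch{2}$ meet in at most one point; hence if two faces of $p$ lie on chains, those chains must coincide and all four vertices lie on that common chain, making every face $\mathbb{C}$-plane. Consequently, a tetrahedron $p$ is $\mathbb{C}$-plane if and only if $\tau(m(p))$ lies in \emph{all four} Cartan varieties $\mathbb{S}_{123},\,\mathbb{S}_{124},\,\mathbb{S}_{134},\,\mathbb{S}_{234}$. Restricting to $\mathbb{M}\subset\mathbb{S}$, this identifies
\[
\mathcal{C}=\mathbb{M}\cap\mathbb{S}_{1}\cap\mathbb{S}_{2}\cap\mathbb{S}_{3}\cap\mathbb{S}_{4}.
\]

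Second, I would invoke Corollary 3.1, which states that the intersection $\mathbb{S}_{1}\cap\mathbb{S}_{2}\cap\mathbb{S}_{3}\cap\mathbb{S}_{4}$ coincides with the singular set of the basic variety $\mathbb{S}$. Thus $\mathcal{C}=\mathbb{M}\cap\mathrm{Sing}(\mathbb{S})$. At this point I would verify that $\mathrm{Sing}(\mathbb{S})\subset\mathbb{M}$, so that this intersection is simply $\mathrm{Sing}(\mathbb{S})$ itself: by Propositions 3.4 and 3.5, every singular point of $\mathbb{S}$ has $A=\pm\pi/2$ together with $a+c=1,\ b=d=0$ (in the notation $X_1=a+bi$, $X_2=c+di$). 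In particular $b=0$, so
\[
\mathrm{Re}(X_1 e^{-iA})=\mathrm{Re}\bigl((a+bi)(\mp i)\bigr)=\pm b=0\geq 0,
\]
and $-\pi/2\leq A\leq \pi/2$ is obviously satisfied. Hence $\mathrm{Sing}(\mathbb{S})\subset\mathbb{M}$.

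Third, I would identify $\mathrm{Sing}(\mathbb{M})$ with $\mathrm{Sing}(\mathbb{S})\cap\mathbb{M}$. Since $\mathbb{M}$ is obtained from $\mathbb{S}$ by closed real inequalities on continuous functions that are independent of the defining equation of $\mathbb{S}$, at every point of $\mathbb{M}$ where $\mathbb{S}$ is a smooth hypersurface $\mathbb{M}$ is locally either the full hypersurface or a smooth manifold with boundary, and so is non-singular as a topological space. Conversely, at every point of $\mathrm{Sing}(\mathbb{S})\cap\mathbb{M}$ the variety $\mathbb{S}$ itself fails to be locally a manifold, and the same failure is inherited by $\mathbb{M}$. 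Combining these three steps gives $\mathcal{C}=\mathrm{Sing}(\mathbb{M})$.

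The main obstacle is the bookkeeping in the third step: the singular points of $\mathbb{S}$ occur precisely on the boundary slices $A=\pm\pi/2$, which are also where some of the inequality constraints defining $\mathbb{M}$ become tight. Care is needed to make precise that the singular behaviour seen in $\mathbb{M}$ at these points is genuinely due to the collapse of $\mathbb{S}$ (captured by the reducible equation $(a+c-1)^{2}+(b+d)^{2}=0$ in Proposition 3.5) and not merely an artefact of the boundary of the allowed region; otherwise the three earlier steps go through by direct appeal to the results already established.
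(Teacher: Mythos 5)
Your steps 1 and 2 are correct and assemble the paper's surrounding results in exactly the intended way: the sharpening of Proposition 3.3 (two faces share two vertices, two distinct chains meet in at most one point, so two $\mathbb{C}$-plane faces force a single common chain) identifies $\mathcal{C}$ with $\mathbb{M}\cap\mathbb{S}_1\cap\mathbb{S}_2\cap\mathbb{S}_3\cap\mathbb{S}_4$, and Corollary 3.1 together with the explicit description $a+c=1$, $b=d=0$, $A=\pm\pi/2$ from Proposition 3.5 then gives $\mathcal{C}=\mathbb{M}\cap\mathrm{Sing}(\mathbb{S})$, with the check that these points do satisfy ${\rm Re}(X_1e^{-iA})\geq 0$. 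The paper states the corollary with no written proof, so up to this point you have reconstructed its implicit argument.

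The gap is in your step 3, and it is a genuine one. You justify $\mathrm{Sing}(\mathbb{M})=\mathrm{Sing}(\mathbb{S})\cap\mathbb{M}$ by saying the inequalities cutting $\mathbb{M}$ out of $\mathbb{S}$ are ``independent of the defining equation of $\mathbb{S}$,'' so that at smooth points of $\mathbb{S}$ the set $\mathbb{M}$ is locally a manifold, possibly with boundary. But Propositions 3.6--3.9 assert precisely the opposite: every Cartan variety meets $\mathbb{S}$ \emph{non-transversally}, and the slice $\mathbb{S}\cap\{A=\pm\pi/2\}$ has codimension $2$ in $\mathbb{S}$ rather than $1$. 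Hence at a smooth point of $\mathbb{S}$ with $A=\pi/2$ lying in $\mathbb{M}\setminus\mathcal{C}$ the active constraint $A\leq\pi/2$ is critical for $A|_{\mathbb S}$, and ``manifold with boundary'' does not follow; one must analyse the local structure directly (for fixed $(c,d)$ the fibre of $\mathbb{S}$ over $A$ is a circle in the $(a,b)$-plane of radius squared $2c(1+\cos 2A)-2d\sin 2A$, which shrinks to a point as $A\to\pi/2$, and one has to verify that the resulting family sweeps out a topological cell near such points but not near points of $\mathcal{C}$). The converse half of your step 3 is also a non sequitur: that $\mathbb{S}$ fails to be locally a manifold at a point does not imply that $\mathbb{S}$ intersected with closed half-spaces fails there --- compare the cone $x^2+y^2=z^2$, singular at the origin, whose half $z\geq 0$ is a topological $2$-manifold. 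You correctly flag the boundary interaction as the main obstacle, but the proposal does not overcome it; in fairness, the paper offers no argument for this step either beyond the informal discussion in Section 3.2.5.
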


\subsubsection{$\mathbb C$- plane configurations}
In this section, we describe the set of $\mathbb C$-plane tetrahedra
in terms of our coordinates $X_1, X_2, A$.

\begin{theorem}
Let $p$ be a tetrahedron, and let $ \tau (m(p)) = (X_1, X_2,A)$.
Then $p$ is $\mathbb C$-plane if and only if $A = \pm \pi/2 $ and
$X_1, X_2$ are real numbers satisfying the relation $X_1 + X_2=1$.
\end{theorem}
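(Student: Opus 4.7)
The plan is to translate the geometric condition of being $\mathbb{C}$-plane into algebraic conditions on the Gram subdeterminants, and then use the explicit formulas of Proposition 2.8 to convert these into conditions on the coordinates $(X_1,X_2,A)$. By the definition immediately before Proposition 3.2 and by the last Remark of Section 2.1, $p$ is $\mathbb{C}$-plane iff all four vertices lie on a single chain; since two distinct chains share at most one point, as soon as two faces lie on chains they must be the \emph{same} chain, and then all four vertices lie on it. Hence, for a $\mathbb{C}$-plane tetrahedron all four subdeterminants $\det G(i,j,k)$ vanish simultaneously.

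For the forward implication I would chain the four vanishing conditions in the order suggested by Proposition 2.8. First, $\det G(1,2,3) = -2\,\mathrm{Re}(e^{i A}) = 0$ together with the Cartan restriction $A \in [-\pi/2,\pi/2]$ forces $A = \pm\pi/2$. Substituting this into $\det G(1,2,4) = 0$ gives $\mathrm{Re}(\mp i\,\overline{X}_1) = 0$, i.e.\ $\mathrm{Im}(X_1) = 0$, so $X_1 \in \mathbb R$; symmetrically $\det G(1,3,4) = 0$ yields $X_2 \in \mathbb R$ (after which $\det G(2,3,4) = 0$ holds automatically). Finally I plug $A = \pm\pi/2$ and real $X_1,X_2$ into the basic equation of Theorem 3.1; since $e^{-2iA} = -1$, the left-hand side collapses to $(X_1+X_2-1)^2$, forcing $X_1+X_2 = 1$.

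For the converse, starting from $A = \pm\pi/2$, $X_1,X_2\in\mathbb R$, and $X_1+X_2 = 1$, I would verify directly that each of the four expressions on the right-hand side of Proposition 2.8 is the real part of a purely imaginary quantity, hence vanishes. By Proposition 2.8 this means every face of $p$ lies on a chain, so by Proposition 3.2 the tetrahedron $p$ is $\mathbb{C}$-plane.

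The nontrivial geometric input is the remark that a $\mathbb{C}$-plane tetrahedron produces the vanishing of \emph{all four}, not merely two, face determinants; this is what allows the algebraic conditions to be chained together into the clean statement. The rest is a mechanical substitution into the formulas of Proposition 2.8 and the basic equation, so I do not anticipate any technical obstruction beyond keeping the signs consistent in the two cases $A = \pi/2$ and $A = -\pi/2$. As an alternative route one could invoke Corollary 3.2, which identifies $\mathcal C$ with the singular set of $\mathbb M$, and read the conclusion directly from Propositions 3.5--3.9; but the direct computation via Proposition 2.8 is more self-contained and exhibits the role of each Cartan invariant.
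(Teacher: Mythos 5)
Your argument is correct, and it uses the same essential ingredients as the paper's proof (the face-determinant formulas of Proposition 2.8, the basic equation, and the fact that two chains sharing two points coincide), but it is organized more directly. For the forward implication the paper defers to the singular-set computation in the proof of Proposition 3.5 (which identifies the locus $a+c=1$, $b=d=0$ at $A=\pm\pi/2$), whereas you chain the four vanishing conditions $\det G(i,j,k)=0$ one by one and then read $X_1+X_2=1$ off the identity $(X_1+X_2-1)^2=0$; the content is the same, but your version makes explicit the step the paper compresses into ``it follows immediately from the proof of Proposition 3.5.'' For the converse the paper takes a detour through Theorem 3.1: it invokes surjectivity of $\tau$ to produce a tetrahedron with the given coordinates and then checks that two of its faces lie on chains. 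Since the theorem already supplies the tetrahedron $p$, your direct verification that all four face determinants vanish (and hence, by the chain-coincidence argument of Proposition 3.2, that all four vertices lie on one chain) is cleaner and avoids relying on the existence statement. The only point to watch is the sign bookkeeping for $e^{\pm i\pi/2}$, which, as you note, does not affect the conclusions ${\rm Im}(X_1)={\rm Im}(X_2)=0$.
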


\begin{proof}
If $p$ is $\mathbb C$-plane then it follows immediately from the
proof of Proposition 3.5 that $X_1, X_2,A$ satisfy all the
conditions of the theorem. Assume now that $X_1, X_2,A$ satisfy the
conditions of the theorem. Then it is easy to see that they satisfy
all the conditions of Theorem 3.1. Therefore, there exists a
tetrahedron $p= (p_1,p_2,p_3,p_4)$ such that $ \tau (m(p)) = (X_1,
X_2,A)$. Since $A= \pm \pi/2$, the points $p_1,p_2,p_3$ are in a
chain. Moreover, it follows from the proof of Proposition 3.5 that
the points $p_1,p_2,p_4$ are also in a chain. This implies that the
points $p_1,p_2,p_3,p_4$ must be in the same chain.
\end{proof}

\subsubsection{$\mathbb R$- plane configurations}

Let $p=(p_1,p_2,p_3,p_4)$ be a tetrahedron of distinct points of
$\partial\ch{2}$. We say that $p$ is $\mathbb R$-{\sl plane} if all
of its vertices are in an $\mathbb R$-circle.

\begin{theorem}
Let $p$ be a tetrahedron, and let $ \tau (m(p)) = (X_1, X_2,A)$.
Then $p$ is $\mathbb R$-plane if and only if $A = 0$ and $X_1, X_2$
are positive real numbers satisfying the relation
$$-2(X_1 + X_2) - 2 \ X_1  X_2 + X_1^2 + X_2^2 +1 =0.$$
\end{theorem}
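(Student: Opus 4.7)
The strategy is to identify the $\mathbb R$-plane condition with reality of the normalized Gram matrix $G=(g_{ij})$ of $p$, and then read off the consequences for $A$, $X_1$, $X_2$ via the formulae of Proposition 2.6.

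For the forward direction, suppose $p$ is $\mathbb R$-plane, so all $p_i$ lie in $\partial\rh{2}$, the boundary of a totally real $2$-plane arising as the projectivization of a real form $V\cong\mathbb R^{2,1}\subset\mathbb C^{2,1}$. Lifts $P_i\in V$ can be chosen, yielding a Gram matrix with real entries; the normalization of Proposition 2.1 rescales only by real factors (dividing successively by $g_{12}$, $g_{23}$, $g_{34}$, and $\sqrt{\mod{g_{13}}}$), so the normalized matrix $G$ stays real. Combining $\mod{g_{13}}=1$ with ${\rm Re}(g_{13})\leq 0$ from Proposition 3.1 forces $g_{13}=-1$, hence $\mathbb A=\arg(-\bar g_{13})=0$. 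Proposition 2.6 then gives $X_2=1/g_{14}$ and $X_1=-g_{24}/g_{14}$, both real; the Sylvester inequalities ${\rm Re}(g_{13}\bar g_{14})\leq 0$ and ${\rm Re}(\bar g_{24})\leq 0$ from Corollary 3.1, together with $g_{14},g_{24}\neq 0$, pin down $g_{14}>0$ and $g_{24}<0$, whence $X_1,X_2>0$. The claimed algebraic relation then drops out of Proposition 3.2 by substituting $A=0$ and reality.

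For the converse, assume $A=0$ and $X_1,X_2>0$ satisfy the stated relation. The hypotheses of Theorem 3.1 are trivially met (since $A\in[-\pi/2,\pi/2]$ and ${\rm Re}(X_1 e^{-iA})=X_1>0$), producing a tetrahedron $p$ with $\tau(m(p))=(X_1,X_2,0)$; its normalized Gram matrix has $g_{13}=-1$, $g_{14}=1/X_2$, $g_{24}=-X_1/X_2$, all real. Revisiting the construction of $P_1,\ldots,P_4$ in the proof of Proposition 3.1, the parameters $z_1,w_1,w_3$ are real, the moduli equations read $\mod{z_2}^2=2$ and $\mod{w_2}^2=2X_1/X_2^2$, and the compatibility equation becomes $z_2\bar w_2=(X_2-X_1-1)/X_2$, a real quantity. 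The stated relation is equivalent to the discriminant identity $(X_2-X_1-1)^2=4X_1$, which is exactly the condition allowing the real choices $z_2=\sqrt 2$ and $w_2=(X_2-X_1-1)/(\sqrt 2\, X_2)$ to satisfy all three equations simultaneously. With these real lifts every $P_i$ lies in $\mathbb R^{2,1}$, so the $p_i$ all lie in the $\mathbb R$-circle $\partial\rh{2}$ attached to this real form, proving that $p$ is $\mathbb R$-plane. The hardest point of the argument is this very last step: upgrading the algebraic reality of $G$ into the geometric statement that the four points share a common $\mathbb R$-circle, via the observation that the given quadratic relation is precisely the compatibility condition for real solutions of the construction in Proposition 3.1.
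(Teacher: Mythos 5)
Your proof is correct, and in the converse direction it takes a genuinely different route from the paper. The paper disposes of the forward direction in one sentence and, for the converse, argues as follows: reality of $X_1,X_2$ and $A=0$ force the normalized Gram matrix to be real via Proposition 2.6, hence $G=\overline{G}$, so by Corollary 2.2 the quadruple is congruent to itself under an anti-holomorphic isometry; the four points are fixed by this isometry and therefore lie on a common $\mathbb R$-circle. You instead re-run the explicit construction from the proof of Proposition 3.1 with the real data $z_1=-1$, $w_1=1/X_2$, $w_3=-X_1/X_2$, observe that the stated quadratic relation is exactly the compatibility identity $(X_2-X_1-1)^2=4X_1$ permitting the real choices $z_2=\sqrt2$, $w_2=(X_2-X_1-1)/(\sqrt2\,X_2)$, and conclude that all four lifts sit in the real form $\mathbb R^{2,1}\subset\mathbb C^{2,1}$, whose projectivized null cone is an $\mathbb R$-circle. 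Your version is more constructive and avoids invoking the (unproved in the paper) fact that an anti-holomorphic isometry fixing four boundary points pointwise must fix an entire $\mathbb R$-plane; the paper's version is shorter and recycles Corollary 2.2. Your forward direction also usefully fills in what the paper dismisses as "immediate": that real lifts survive the normalization of Proposition 2.1, that $\mod{g_{13}}=1$ together with ${\rm Re}(g_{13})\le0$ pins $g_{13}=-1$ hence $A=0$, and that the sign conditions on $\det G(1,3,4)$ and $\det G(2,3,4)$ yield $g_{14}>0$, $g_{24}<0$ and hence the strict positivity of $X_1$ and $X_2$. The only implicit step worth flagging (present equally in the paper) is that the tetrahedron produced in the converse is a priori only congruent to the given $p$; since being $\mathbb R$-plane is a ${\rm PU}(2,1)$-invariant property and $\tau$ is injective, this is harmless.
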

\begin{proof} It follows immediately from Theorem 3.1  that if $p$ is $\mathbb
R$-plane, then $A=0$ and $X_1, X_2$ are real positive numbers
satisfying the relation in the theorem. Let us assume now that $A=0$
and $X_1, X_2$ are real numbers satisfying the conditions of the
theorem. Then it is readily seen that they satisfy all the
conditions of Theorem 3.1. Therefore, there exists a tetrahedron $p=
(p_1,p_2,p_3,p_4)$ such that $ \tau (m(p)) = (X_1, X_2,0)$. Since
$X_1, X_2$ are real and $A=0$, by applying the formulae in
Proposition 2.6, we get that the normalized Gram matrix associated
to $p$ has real coefficients. Then it follows from Corollary 2.2
that there exists an anti-holomorphic involution which fixes every
point $p_i$, $i=1,2,3,4.$ So, these points must be in the same
$\mathbb R$-circle.
\end{proof}

\subsubsection{Real slice of the basic variety $\mathbb S$}

We call the subset $\mathcal R$ of the basic variety $\mathbb S$ the
{\sl real slice} of  $\mathbb S$ if and only if $X_1$ and $X_2$ are
real numbers.
\begin{theorem} Any point in the real slice $\mathcal R$ satisfies
the inequality ${\rm Re} (X_1 \ e^{-i A})\geq 0$ for $\dps -\pi /2
\leq A \leq \pi/2.$
\end{theorem}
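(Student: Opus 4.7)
\medskip
\noindent \textbf{Proof plan.} The plan is to restrict the defining equation of the basic variety $\mathbb S$ to the real slice and repackage it as a single perfect square. Since $X_1,X_2$ are real, $\bar{X}_2=X_2$ and ${\rm Re}(X_1\bar{X}_2 e^{-i2A})=X_1 X_2\cos(2A)$, so the equation of $\mathbb S$ becomes
\begin{equation*}
X_1^2 + X_2^2 + 1 - 2(X_1 + X_2) - 2 X_1 X_2 \cos(2A) = 0.
\end{equation*}
Using $\cos(2A)=2\cos^2 A - 1$ and completing the square in $X_1+X_2$, a routine rearrangement reduces this to the key identity
\begin{equation*}
(X_1 + X_2 - 1)^2 = 4 X_1 X_2 \cos^2 A.
\end{equation*}

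Next I would split on the value of $A$. If $A=\pm\pi/2$, then $\cos A=0$, so ${\rm Re}(X_1 e^{-iA})=X_1\cos A=0$ and the desired inequality holds with equality. For $-\pi/2<A<\pi/2$ one has $\cos A>0$; since the right hand side of the identity above equals a square, it is nonnegative, and hence $X_1 X_2\geq 0$. Because $X_1, X_2 \in \mathbb{C}_*=\mathbb{C}\setminus\{0\}$, this forces $X_1$ and $X_2$ to be nonzero real numbers of the same sign.

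The remaining step, which I expect to be the main obstacle, is to rule out that both $X_1$ and $X_2$ are negative. Suppose for contradiction $X_1=-a$ and $X_2=-b$ with $a,b>0$. Substituting into the identity gives
\begin{equation*}
(a + b + 1)^2 = 4 a b \cos^2 A \leq 4 a b,
\end{equation*}
which contradicts the elementary estimate $(a+b+1)^2 > (a+b)^2 \geq 4ab$ (the second inequality being AM-GM). Hence $X_1, X_2 > 0$. Since $\cos A\geq 0$ on $[-\pi/2,\pi/2]$, one concludes ${\rm Re}(X_1 e^{-iA})=X_1\cos A\geq 0$, as required.
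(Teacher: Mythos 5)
Your proof is correct, but it follows a genuinely different route from the paper's. The paper restricts the defining equation to $b=d=0$, obtaining $-2(a+c)-2ac\cos 2A+a^{2}+c^{2}+1=0$, and then argues geometrically: it regards this as a family of conics in the $(a,c)$-plane parametrized by $A$, computes the discriminant $-4\sin^{2}2A$, and shows each member is an ellipse or parabola tangent to the coordinate axes at $(1,0)$ and $(0,1)$ (degenerating to the double line $(a+c-1)^{2}=0$ at $A=\pm\pi/2$), from which $a>0$ or $A=\pm\pi/2$, hence $a\cos A\geq 0$. You instead compress the same equation into the single algebraic identity $(X_1+X_2-1)^{2}=4X_1X_2\cos^{2}A$ (which checks out: $\cos 2A=2\cos^{2}A-1$ turns $-2X_1X_2\cos 2A$ into $2X_1X_2-4X_1X_2\cos^{2}A$, and the remaining terms assemble into the square), read off $X_1X_2\geq 0$ for $-\pi/2<A<\pi/2$, and exclude $X_1,X_2<0$ via $(a+b+1)^{2}>(a+b)^{2}\geq 4ab$. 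This is shorter and entirely elementary, and your identity also recovers for free the paper's double-line description of the $A=\pm\pi/2$ slice and the $\mathbb R$-plane relation of Section 3.2.3 at $A=0$; what the paper's conic viewpoint buys is that the same tangency picture is reused in Section 3.2.5 to locate the whole basic variety relative to the Cartan variety $\mathbb{S}_{134}$. One small repair: $\mathcal R$ is defined as a subset of $\mathbb S\subset\mathbb C^{2}\times\mathbb R$, so you may not simply assert $X_1,X_2\in\mathbb C_{*}$; however, if $X_1=0$ the claimed inequality is the trivial $0\geq 0$ (and the equation forces $X_2=1$), and symmetrically if $X_2=0$ then $X_1=1$ and $\cos A\geq 0$ suffices, so this costs only one extra sentence.
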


\begin{proof}
We write $X_1 = a +bi$, $X_2 = c +di$, $e^{-i2 A} = \cos{2A} - i
\sin{2A}.$  It is readily seen that $\mathbb S$ in terms of
$a,b,c,d,A$ is given by the following equation
$$F(a,b,c,d,A)=F(a,c,A) = -2(a+c) -2(ac) \cos{2A} +
a^2 +c^2 +1 = 0,$$ provided that $b=d=0$. We consider this equation
as an equation in $a$ and $c,$ where $A$ is a parameter. Thus, we
have a family of conics. Easy calculations show that the
discriminant $D$ of any conic in this family is equal to
$D=-4\sin^2{2A}.$ Therefore, these conics are of elliptic or
parabolic type. Moreover, easy considerations show that any conic in
this family is either ellipse or parabola for any $\dps -\pi /2 < A
< \pi/2$ (which are tangent to the axes at the points $(a,c)=(1,0)$
and $(a,c)=(0,1)$), or the double line $(a+c-1)^2=0$ for $A=\pm
\pi/2$ (compare this with the equations in Propositions 3.4  - 3.5).
This implies that if $(X_1, X_2, A)$ lies in the real slice
$\mathcal R$ then $a>0$ or $A=\pm \pi/2$. So, in this case, ${\rm
Re} (X_1 \ e^{-i A}) = a\cos A \geq 0.$ This proves the theorem.
\end{proof}
\begin{co} Any point in the real slice $\mathcal R$ represents a
point in the configurations space $\mathcal M$ provided that $\dps
-\pi /2 \leq A \leq \pi/2.$ The  $\mathbb C$-plane and $\mathbb
R$-plane tetrahedra define a subset lying in the real slice.
\end{co}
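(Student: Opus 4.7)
The plan is to observe that both assertions of this corollary are essentially bookkeeping: everything needed has been established in the three statements immediately preceding it, and the proof amounts to assembling them correctly.

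For the first assertion, I would fix $(X_1, X_2, A) \in \mathcal{R}$ with $-\pi/2 \leq A \leq \pi/2$ and verify the three defining conditions of the moduli space $\mathbb{M}$ from Theorem 3.1. Membership in $\mathcal{R} \subset \mathbb{S}$ gives the basic variety equation for free, the hypothesis gives the range of $A$, and the remaining inequality $\mathrm{Re}(X_1 e^{-iA}) \geq 0$ is precisely the content of Theorem 3.4. Hence $(X_1, X_2, A) \in \mathbb{M}$, and via the homeomorphism $\tau \colon \mathcal{M} \to \mathbb{M}$ from Theorem 3.1, the point represents an element of $\mathcal{M}$.

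For the second assertion, I would simply invoke Theorems 3.2 and 3.3. Theorem 3.2 characterizes a $\mathbb{C}$-plane tetrahedron $p$ by $\tau(m(p)) = (X_1, X_2, \pm \pi/2)$ with $X_1, X_2$ real satisfying $X_1 + X_2 = 1$, and Theorem 3.3 characterizes an $\mathbb{R}$-plane tetrahedron by $\tau(m(p)) = (X_1, X_2, 0)$ with $X_1, X_2$ positive real. In both cases $X_1, X_2 \in \mathbb{R}$, so by definition $\tau(m(p)) \in \mathcal{R}$.

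Because both halves reduce to direct citation of previously proved results, there is no genuine obstacle in the argument. The only subtlety worth noting is the technical requirement $X_1, X_2 \neq 0$ for membership in $\mathbb{M} \subset \mathbb{C}_{*}^{2} \times \mathbb{R}$; the conic analysis carried out inside the proof of Theorem 3.4 already shows that the family of conics describing $\mathcal{R}$ is tangent to the coordinate axes only at the isolated points $(1,0)$ and $(0,1)$ for $-\pi/2 < A < \pi/2$, and degenerates to the line $X_1 + X_2 = 1$ at $A = \pm \pi/2$, so the ``non-$\mathbb{C}_{*}^{2}$'' locus in $\mathcal{R}$ consists of a short list of degenerate points that one can verify do not arise from any actual quadruple.
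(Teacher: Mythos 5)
Your proof is correct and follows exactly the route the paper intends: the corollary is stated without proof as an immediate consequence of Theorem 3.4 (which supplies the inequality ${\rm Re}(X_1 e^{-iA})\geq 0$) together with Theorem 3.1, and of Theorems 3.2 and 3.3 for the second assertion. Your additional remark about the points of $\mathcal R$ where $X_1=0$ or $X_2=0$ is a care the paper itself does not take, and it is a legitimate refinement rather than a deviation.
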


\subsubsection{Topological picture of the moduli space $\mathbb M$}

In this section, we describe the topology of the moduli space
$\mathbb M.$

\vspace{2mm}

It follows from Theorem 3.1 that not all points of $\mathbb{S}_1$
with $A = \pm \pi/2$ belong to the moduli space $\mathbb M$: the
singular set $\mathcal C$ divides this set into two parts, and only
the part defined by the restrictions in Theorem 3.1 belongs to
$\mathbb M$. We call such a part {\sl positive}. Using notations as
in Theorem 3.1, it is easy to see that the positive part is defined
by the inequality $b\geq 0$, when $ A = \pi/2$, and by the
inequality $b \leq 0$, when $ A = - \pi/2.$

\vspace{2mm}

Another important observation which may help to understand the
topology of $\mathbb M$ is the following.

\begin{prop} Any point $(X_1, X_2, A)$ in the basic variety $\mathbb S$
satisfies the inequality ${\rm Re} (X_1 \ e^{-i A})\geq 0$ for $\dps
-\pi /2 < A < \pi/2.$
\end{prop}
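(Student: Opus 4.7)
The plan is to rewrite the defining equation of the basic variety $\mathbb S$ as a quadratic in the real and imaginary parts of $X_2$, treating $X_1$ and $A$ as parameters, and then complete the square. Setting $X_1 = a+bi$ and $X_2 = c+di$ as in Section 3.2.1, and using
\begin{equation*}
X_1\overline{X}_2 e^{-i2A} = [(ac+bd)+i(bc-ad)][\cos 2A -i\sin 2A],
\end{equation*}
the defining equation of $\mathbb S$ regroups in the form $c^2 - 2Pc + d^2 - 2Qd + R = 0$, where $P = 1 + a\cos 2A + b\sin 2A$, $Q = b\cos 2A - a\sin 2A$, and $R = a^2+b^2-2a+1$.

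Completing the square yields the identity $(c-P)^2 + (d-Q)^2 = P^2+Q^2-R$, and the core calculation is to simplify the right-hand side. I expect the cross terms $\pm 2ab\sin 2A\cos 2A$ coming from $P^2$ and $Q^2$ to cancel and the $\cos^2 2A+\sin^2 2A$ combinations to collapse, leaving $P^2+Q^2 = 1 + a^2 + b^2 + 2a\cos 2A + 2b\sin 2A$. Subtracting $R$ and applying the double-angle identities $1+\cos 2A = 2\cos^2 A$ and $\sin 2A = 2\sin A\cos A$ should factor out $\cos A$, giving
\begin{equation*}
(c-P)^2 + (d-Q)^2 \;=\; 4\cos A\,(a\cos A + b\sin A) \;=\; 4\cos A\cdot {\rm Re}(X_1 e^{-iA}).
\end{equation*}

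Since the left-hand side is a sum of two squares and hence non-negative, and $\cos A > 0$ for $-\pi/2 < A < \pi/2$, the inequality ${\rm Re}(X_1 e^{-iA}) \geq 0$ follows at once. The only real obstacle is keeping the algebra in $P^2+Q^2-R$ organized; the observation that $\mathbb S$, for fixed $(X_1,A)$, is a circle in the $(c,d)$-plane whose squared radius must be non-negative is what produces the inequality, and no geometric input beyond the defining equation of $\mathbb S$ and the double-angle identities is required.
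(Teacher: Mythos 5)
Your proof is correct, and the algebra checks out: with $P=1+a\cos 2A+b\sin 2A$, $Q=b\cos 2A-a\sin 2A$, $R=a^2+b^2-2a+1$, one indeed gets $P^2+Q^2-R=2a(1+\cos 2A)+2b\sin 2A=4\cos A\,(a\cos A+b\sin A)$, so the slice of $\mathbb S$ over a fixed $(X_1,A)$ is a circle in the $X_2$-plane of squared radius $4\cos A\,{\rm Re}(X_1e^{-iA})$, and nonnegativity is immediate since $\cos A>0$ on $(-\pi/2,\pi/2)$. This is a genuinely different route from the paper's. The paper slices the other way: it fixes $(X_2,A)$, observes that the resulting conic in the $(a,b)$-plane is of elliptic type, invokes the earlier non-transversality computations to see that this conic is tangent to the line $a\cos A+b\sin A=0$, uses the non-singularity of $\mathbb S$ for $-\pi/2<A<\pi/2$ (Proposition on singular points) to rule out degeneration, and then argues that the ellipse lies in the closed half-plane where $a\cos A+b\sin A\geq 0$. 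Your version buys several things: it is self-contained (no appeal to the tangency propositions or to the non-singularity result), it replaces the paper's ``easy arguments show'' side-determination by an explicit identity, and as a bonus it characterizes exactly which $X_2$ occur for given $(X_1,A)$ (a circle of radius $2\sqrt{\cos A\,{\rm Re}(X_1e^{-iA})}$ centered at $P+iQ$), which also explains the collapse to the double line $X_1+X_2=1$ at $A=\pm\pi/2$. The paper's approach, in exchange, ties the statement directly to the geometry of the Cartan varieties and their tangential intersections with $\mathbb S$, which is the theme of that section.
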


\begin{proof}
Let us write the equation for the basic variety $\mathbb S$ as in
the proof of Proposition 3.4, that is,
$$F(a,b,c,d,A) = -2(a+c) -2[ (ac +bd) \cos{2A} + \ (bc-ad) \sin{2A} ] +
a^2 + b^2 +c^2 + d^2 +1 = 0.$$ We consider this equation as an
equation in $a$ and $b,$ where $c,d,A$ are considered as parameters.
Thus, we have a family of conics. It is easy to see that the
discriminant $D$ of any conic in this family is equal to $D=-1.$
Therefore, all these conics are of elliptic type. We are going to
determine the relative position of any conic in this family and the
line $L$ given by the equation ${\rm Re} (X_1 \ e^{-i A}) = a\cos A
+b \sin A =0$ for $\dps -\pi /2 < A < \pi/2.$ Let $K$ be a conic in
this family. We assume first that $K$ is not degenerate, that is,
$K$ is an ellipse. Using Proposition 3.8, we see that $K$ and $L$
are tangent, and, therefore, $K$ intersects $L$ in only one point
which is a unique point of tangency of $K$ and $L$. Hence, $K$ lies
entirely in the closure of a component of the complement of $L$.
Then easy arguments show that $K$ lies in the component where $a
\cos A +b \sin A \geq 0.$ On the other hand, since $\mathbb S$ has
no singular points for $\dps -\pi /2 < A < \pi/2,$ see Proposition
3.8, it readily seen that $K$ is not degenerate for $\dps -\pi /2 <
A < \pi/2.$  All this implies that for $\dps -\pi /2 < A < \pi/2$ \
the basic variety $\mathbb S$ lies in the component of the
complement of the Cartan variety $\mathbb S_{134}$ where ${\rm Re}
(X_1 \ e^{-i A}) \geq 0.$ This proves the proposition.
\end{proof}

\vspace{2mm}

Resuming all the above, we get the following  topological picture
for $\mathbb M$: the moduli space $\mathbb M$ looks like a real
analytic $4$-dimensional variety in $\mathbb C_{*}^{2} \times
\mathbb R$ truncated by a $4$-dimensional analytic variety
intersecting this variety non-transversally along a $2$-dimensional
analytic subvariety. The truncated part contains the singular set of
$\mathbb M$ which is a $1$-dimensional real analytic subvariety.

\vspace{2mm}

This implies that the moduli space $\mathbb M$ cannot be described
as a real analytic (algebraic) variety.

\vspace{2mm}

\subsection{The configuration space of ordered quadruples in the
boundary of the complex hyperbolic plane and the
Falbel-Parker-Platis cross-ratio varieties}

In this section, we find a relation between our moduli space
$\mathbb M$ and the cross-ratio varieties constructed by Falbel and
Parker-Platis. We show that their varieties cannot serve as moduli
spaces for the configuration space $\mathcal M$. We consider only
the Parker-Platis variety which is more easy to describe. We remark
that Falbel' variety is homeomorphic to the Parker-Platis variety,
see  \cite{PP2}.

\vspace{2mm}

In \cite{PP1}, Parker and Platis  define the map $\pi' : \mathcal M
\longrightarrow \mathbb C^{3}$ by the following formula:
\begin{equation*}
\pi' :m(p)\rightarrow \bigl (X_1=X(p_1,p_2,p_3,p_4), \ X_2=
X(p_1,p_3,p_2,p_4),\  X_3=X(p_2,p_3,p_1,p_4)\bigr).
\end{equation*}

\vspace{2mm}

They proved that $X_1, X_2, X_3$ satisfy the following relations:

\begin{itemize}
\item $| X_2| =| X_1| | X_3|,$
\item $2| X_1| ^2{\rm Re}(X_3)=| X_1| ^2+| X_2| ^2+1-2 \ {\rm Re}(X_1+X_2).$
\end{itemize}

\vspace{2mm}

The Parker-Platis cross-ratio variety $\mathcal X'$ is the subset of
$\mathbb{C}^3,$ where these relations are satisfied. The main result
of \cite{PP1} is that the map $\pi' : \mathcal M \longrightarrow
\mathcal X' $ defined above is a bijection, see Proposition 5.5 and
Proposition 5.10 in \cite{PP1}. Next we show that this map is not
injective.

\vspace{2mm}

 We define the map $\theta :
\mathbb{M} \rightarrow \mathbb{C}^3$  by the formula
$$\theta : (X_1, X_2,A) \mapsto (X_1,\ X_2,\ (X_{2}/X_{1}) \
e^{2iA}).$$  It follows from Proposition 2.6 that $ X_3 =
(X_{2}/X_{1}) \ e^{2iA}.$ Also, easy calculations show that for
every $(X_1, X_2,A) \in \mathbb{M}$  the point $\theta (X_1, X_2,A)$
belongs to $\mathcal X'.$ In fact, the first equation in the
definition of the Parker-Platis cross-ratio variety is exactly the
property of the Kor\'{a}nyi-Reimann complex cross-ratios which
relates $X_1, X_2, X_3$, see p.225 in  Goldman \cite{Go}. The second
relation is equivalent to the equation $\det G =0$, where $G$ is a
Gram matrix associated to $p$, and the Gram determinant is expressed
in terms of $X_1, X_2, X_3$, see Proposition 2.6. Thus, $\theta$
defines a map $\theta : \mathbb{M} \rightarrow \mathcal X'.$

It follows from the definition that $\theta$ is injective for
$-\pi/2 < A < \pi/2.$ On the other hand, it is easy to see that
$\theta$ is not injective being restricted to the subset of $\mathbb
{M}$, where $A=\pm \pi/2$. Using the results of Section 3.2.1, we
see that $\theta$ is not injective exactly on the singular set
$\mathcal C$, that is,
$$\theta (X_1, X_2, \pi/2)=\theta (X_1, X_2, -\pi/2)$$
if and only if $X_1, X_2$ satisfy the conditions of Theorem 3.1.
Thus, $\theta$ glues the points of $\mathbb M$ corresponding to the
$\mathbb C$-plane tetrahedra whose $(1 2  3)$ -faces have Cartan's
invariants with opposite signs. This implies that the map $\pi' :
\mathcal M \longrightarrow \mathcal X'$ is not injective.

For those readers, who prefer to use the standard position of
points, see, for instance, Falbel \cite{F}, here is an explicit
example which illustrates the above situation. To describe this
example, we use horospherical coordinates $(z,v),  \ z \in \mathbb
C,  \ v  \in \mathbb R$, on $\partial\ch{2}$, see Section 1.

We define the following quadruples
$$p(t)=(p_1,p_2,p_3,p_4)=((0,0),\infty, (0,1),(0,t)),$$
and
$$p^{*}(t)=(p_1^*,p_2^*,p_3^*,p_4^*)=((0,0),\infty,
(0,-1),(0,-t)),$$ where $t>0, t\neq 1$.

It is easy to see that the tetrahedra $p(t)$ and $p^{*}(t)$ are
$\mathbb{C}$-plane: they lie in the vertical line (in horospherical
coordinates, this line represents a $\mathbb{C}$-circle, see
\cite{Go}).  The standard lifts for $p_i$ and $p^{*}_i$, see Section
1, are given by the following vectors

$$P_1 \ = \ \begin{bmatrix} 0 \\ 0 \\  1 \end{bmatrix},
\ \ \ \  P_2 \ = \ \begin{bmatrix} 1 \\  0 \\ 0 \end{bmatrix}, \ \ \
P_3 \ = \ \begin{bmatrix} i \\ 0 \\ 1 \end{bmatrix}, \ \ \  P_4 \ =
\
\begin{bmatrix} it \\ 0 \\ 1 \end{bmatrix}, $$

\vspace{6mm}

$$ \ \ \ \  \ P^*_1 \ = \ \begin{bmatrix} 0 \\ 0 \\  1 \end{bmatrix},
\ \ \ \  P^*_2 \ = \ \begin{bmatrix} 1 \\  0 \\ 0 \end{bmatrix}, \ \
\ P^*_3 \ = \ \begin{bmatrix} -i \\ 0 \\ 1 \end{bmatrix}, \ \ \
P^*_4 \ = \   \begin{bmatrix} -it \\ 0 \\ 1 \end{bmatrix}, $$

\vspace{2mm}

\noindent We compute the Hermitian products
$$\herm{P_1,P_2}=1, \ \herm{P_1,P_3}=-i, \ \herm{P_1,P_4}=-it, \
\herm{P_2,P_3}=1, \ \herm{P_2,P_4}=1, \ \herm{P_3,P_4}=i(1-t),$$
$$ \herm{P^*_1,P^*_2}=1, \ \herm{P^*_1,P^*_3}=i, \
\herm{P^*_1,P^*_4}=it, \ \herm{P^*_2,P^*_3}=1, \
\herm{P^*_2,P^*_4}=1, \ \herm{P^*_3,P^*_4}=i(t-1).$$

\noindent Then by definition of the cross-ratios, we have
$$X_1(p(t) )= X_1(p^{*}(t)) = 1/t,  \  X_2 (p(t))= X_2(p^{*}(t)) =
(t-1)/t, \ X_3(p(t) )= X_3(p^{*}(t)) = 1-t.$$

\noindent Also, we compute Cartan's invariants
$$\mathbb{A}(p_1,p_2,p_3)= -\pi/2, \ \ \ \mathbb
{A}(p^*_1,p^*_2,p^*_3)=\pi/2.$$

\vspace{2mm}

Thus, the tetrahedra $p(t)$ and $p^{*}(t)$ have the same
Parker-Platis coordinates. Moreover, it is clear that $p(t)$ and
$p^{*}(t)$ are not congruent with respect to the diagonal action of
${\rm PU}(2,1)$ since
$$\mathbb {A}(p_1,p_2,p_3)=-\mathbb {A
}(p_1^*,p_2^*,p_3^*).$$

\vspace{2mm}

\noindent Let $m(t)$ and $m^{*}(t)$ be the points of $\mathcal M$
corresponding to $p(t)$ and $p^{*}(t)$. We see that $m(t)\neq
m^{*}(t),$  but $\pi' (m(t))=\pi' (m^{*}(t))$. So, we have another
proof of the fact that the map $\pi':\mathcal M \longrightarrow
\mathcal X'$ is not injective.

\vspace{2mm}

\noindent {\bf Remark} \ In Parker-Platis \cite{PP1}, the proof of
the fact that the map $\pi':\mathcal M \longrightarrow \mathcal X'$
is injective is based on Proposition 5.10. Our example shows that
this proposition is not correct.

\vspace{2mm}

\noindent {\bf Remark} \ It is interesting to note that $p(t)$ and
$p^{*}(t)$ are congruent with respect the diagonal action of the
whole isometry group of complex hyperbolic space: there exists an
anti-holomorphic isometry (for instance, the reflection in the real
axes) which sends $p(t)$ to $p^{*}(t)$.

\subsection{The moduli space for the configuration space of ordered quadruples in the
boundary of complex hyperbolic $n$-space}

Let $\mathcal{M}(4,n) $ be the configuration space of ordered
quadruples of distinct points in the boundary of complex hyperbolic
$n$-space, that is, the quotient of the set of ordered quadruples of
distinct points of $\partial\ch{n}$ with respect to the diagonal
action of ${\rm PU}(n,1)$ equipped with the quotient topology.

\vspace{2mm}

In this section, we construct a moduli space for $\mathcal{M}(4,n)$.

\begin{theorem}
$\mathcal {M}(n,4)$  is homeomorphic to the set of points  $X=(X_1,
X_2, A) \in \mathbb C_{*}^{2} \times \mathbb R$ defined by
\begin{equation*}
\dps -2{\rm Re}(X_1+X_2) -2{\rm Re} (X_1\ \overline{X}_2\ e^{-\ i2
A})+ \mod {X_1}^2 +\mod {X_2}^2 +1\leq 0,
\end{equation*}
$$\dps -\pi /2 \leq  A \leq \pi /2,\ \ \ {\rm Re} (X_1 \ e^{-iA})\geq
0.$$ The equality in the first inequality happens if and only if the
quadruples are in the boundary of a complex hyperbolic $2$-space.
\end{theorem}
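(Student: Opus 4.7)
The plan is to adapt the arguments of Theorem 3.1 to arbitrary dimension $n \geq 2$. The only formal change is that the identity $\det G = 0$, which in $\partial\ch{2}$ is forced by the linear dependence of four vectors in a three-dimensional ambient space, becomes the inequality $\det G \leq 0$ in $\ch{n}$ for $n \geq 3$, because four null vectors in $\mathbb{C}^{n,1}$ may now span a genuine four-dimensional subspace of signature $(3,1)$ yet still cannot lie in any positive semi-definite subspace. The map $\tau$ is defined by the same formula as in Theorem 3.1, and its injectivity is immediate from Proposition 2.6 together with Corollary 2.1.

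For the forward direction, let $p \in \mathcal{M}(n,4)$ and let $G = G(p)$ be its normalized Gram matrix. The span $V$ of null lifts $P_1,\ldots,P_4$ has signature $(p_0, q_0, r_0)$ with $q_0 \leq 1$; since four distinct non-proportional null vectors cannot lie in a positive semi-definite subspace, either the Hermitian form restricted to $V$ is degenerate (so $\det G = 0$ and, as the four vectors then span a subspace of dimension at most three of signature at most $(2,1)$, the quadruple lies in the boundary of a $\ch{2}$), or the form on $V$ has exactly one negative direction (so $\det G < 0$). By Proposition 2.7 this is precisely the first inequality of the theorem, with the equality case characterized as in the statement. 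The same argument applied to the sub-triples $(p_1,p_2,p_3)$ and $(p_1,p_2,p_4)$, combined with Proposition 2.8, yields $\det G(1,2,3) = -2\cos A \leq 0$ and $\det G(1,2,4) = -2{\rm Re}(X_1 e^{-iA})/\mod{X_2}^2 \leq 0$, giving the remaining conditions $A \in [-\pi/2, \pi/2]$ and ${\rm Re}(X_1 e^{-iA}) \geq 0$.

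For surjectivity, given $(X_1, X_2, A)$ in the target set, define $G = (g_{ij})$ via the formulae of Proposition 2.6 and search for null vectors in $\mathbb{C}^{n,1}$ realizing it, mimicking the construction of Proposition 3.1 but exploiting the extra coordinates available when $n \geq 3$: try $P_1 = (0,\ldots,0,1)^T$, $P_2 = (1,0,\ldots,0)^T$, $P_3 = (z_1,z_2,z_3,0,\ldots,0,1)^T$, $P_4 = (w_1,w_2,w_3,0,\ldots,0,w_4)^T$. Matching the prescribed values of $g_{1j}$, $g_{23}$, $g_{24}$ determines $z_1, w_1, w_4$; the nullity of $P_3$ and $P_4$ becomes $\mod{z_2}^2 + \mod{z_3}^2 = -2{\rm Re}(g_{13}) =: \alpha$ and $\mod{w_2}^2 + \mod{w_3}^2 = -2{\rm Re}(g_{24}\bar{g}_{14}) =: \beta$, both solvable because $\alpha, \beta \geq 0$ by the hypotheses on $A$ and ${\rm Re}(X_1 e^{-iA})$. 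The remaining equation $g_{34} = 1$ reduces to $z_2 \bar{w}_2 + z_3 \bar{w}_3 = C$ with $C = 1 - \bar{g}_{13} g_{24} - g_{14}$; by the Cauchy--Schwarz inequality this is solvable iff $\mod{C}^2 \leq \alpha\beta$, and the algebraic identity $\mod{C}^2 - \alpha\beta = \det G$, calculated exactly as in the proof of Proposition 3.1, converts this precisely into the hypothesis $\det G \leq 0$. When $\det G = 0$ one may take $z_3 = w_3 = 0$, placing the vectors in $\mathbb{C}^{2,1} \subseteq \mathbb{C}^{n,1}$; when $\det G < 0$ the construction genuinely requires $n \geq 3$. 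Distinctness of the four points follows from the non-vanishing of every off-diagonal $g_{ij}$, a consequence of $X_1, X_2 \neq 0$.

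The homeomorphism claim then follows exactly as in the proof of Theorem 3.1: $\tau$ is continuous on the quotient, and its inverse is continuous via the explicit formulae of Proposition 2.6 and the continuous dependence of the above solution on $(X_1, X_2, A)$. The main obstacle is the surjectivity step, and its crux is the identity $\mod{C}^2 - \alpha\beta = \det G$, which is exactly the algebraic mechanism that promotes the dimension-two result of Theorem 3.1 to arbitrary $n \geq 3$.
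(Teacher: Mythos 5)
Your proposal is correct and follows essentially the same route as the paper: the theorem is reduced to the $n$-dimensional analogue of Proposition 3.1 (necessity of $\det G\leq 0$ via Sylvester's criterion, sufficiency via the explicit null-vector ansatz), with the crux being the identity $\mod{C}^2-\alpha\beta=\det G$ that turns the Cauchy--Schwarz condition into the hypothesis $\det G\leq 0$. The only cosmetic difference is that you realize the required Hermitian product inside an explicit two-dimensional coordinate slot, whereas the paper works with full vectors $z,w\in\mathbb{C}^{n-1}$ and adjusts the angle between the complex lines $\mathbb{C}z$ and $\mathbb{C}w$; the underlying mechanism is identical.
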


\begin{proof}
The proof of this theorem is a slight modification of the proof of
Theorem 3.1. The only thing we need is the following proposition
which substitutes Proposition 3.1.
\end{proof}

\begin{prop}
Let $G=(g_{ij})$ be an Hermitian $4\times 4$-matrix such that
$g_{ii}=0$, $g_{12}=g_{23}=g_{34}=1$, $\mod {g_{13}}=1,$ $g_{14}
\neq 0,$ $g_{24} \neq 0.$ Then $G$ is the normalized Gram matrix for
some ordered quadruple $p=(p_1,p_2,p_3,p_4)$  of distinct points of
$\partial\ch{n}$ if and only if {\rm Re}$(g_{13})\leq 0$, {\rm
Re}$(g_{24} \bar{g}_{14})\leq 0$, and $\det G\leq0.$ The determinant
$\det G=0$ if and only if $p$ is in the boundary of a complex
hyperbolic $2$-space.
\end{prop}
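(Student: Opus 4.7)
The plan is to mimic the construction used in the proof of Proposition 3.1, but with one extra ``slack'' coordinate so that the strict inequality $\det G<0$ can be absorbed. The necessary direction is essentially a signature count: the Gram matrix of any four vectors in a space of signature $(n,1)$ has at most one negative eigenvalue, and since $G$ has zero diagonal but non-zero off-diagonal entries it cannot be positive semi-definite; hence its negative index equals exactly $1$, so $\det G\leq 0$. The inequalities $\mathrm{Re}(g_{13})\leq 0$ and $\mathrm{Re}(g_{24}\bar g_{14})\leq 0$ follow, just as in Proposition 3.1, from applying the same Sylvester-type observation to the $3\times 3$ principal submatrices $G(1,2,3)$ and $G(1,2,4)$ and comparing with the formulae of Proposition 2.4. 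The case $\det G=0$ corresponds to the four lifts $P_1,\dots,P_4$ being linearly dependent, so they span a subspace of dimension at most $3$; since they are null and pairwise non-proportional, this span has signature $(2,1)$ and its projectivization contains $\partial\ch{2}$, which yields the final ``equality'' assertion.

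For the sufficiency direction, I would work in $\mathbb{C}^{3,1}$ (then embed isometrically in $\mathbb{C}^{n,1}$ for any $n\geq 3$, while the degenerate case is already covered by Proposition 3.1 via $\mathbb{C}^{2,1}$). I search for four isotropic vectors of the form
\begin{equation*}
P_1=\begin{bmatrix}0\\0\\0\\1\end{bmatrix},\quad
P_2=\begin{bmatrix}1\\0\\0\\0\end{bmatrix},\quad
P_3=\begin{bmatrix}z_1\\z_2\\0\\1\end{bmatrix},\quad
P_4=\begin{bmatrix}w_1\\w_2\\w_2'\\w_3\end{bmatrix},
\end{equation*}
so that the Hermitian products $\langle P_i,P_j\rangle$ for $\{i,j\}\neq\{4,4\}$ are identical to those computed in the proof of Proposition 3.1 (the extra coordinate $w_2'$ only affects $\langle P_4,P_4\rangle$). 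Setting $z_1,w_1,w_3$ equal to the prescribed entries of $G$ reduces the problem to the system
\begin{equation*}
|z_2|^2=-2\mathrm{Re}(g_{13}),\qquad |w_2|^2+|w_2'|^2=-2\mathrm{Re}(g_{24}\bar g_{14}),\qquad z_2\bar w_2=1-z_1\bar w_3-\bar w_1,
\end{equation*}
where the sign hypotheses $\mathrm{Re}(g_{13})\leq 0$ and $\mathrm{Re}(g_{24}\bar g_{14})\leq 0$ guarantee the first two equations have solutions.

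The key point, and the place where $\det G\leq 0$ enters decisively, is the compatibility of the third equation with the second. Taking moduli in the third equation forces $|w_2|^2=|1-z_1\bar w_3-\bar w_1|^2/|z_2|^2$ (when $z_2\neq 0$); substituting $|z_2|^2=-2\mathrm{Re}(g_{13})$ and applying the explicit identity derived in the proof of Proposition 3.1,
\begin{equation*}
|1-z_1\bar w_3-\bar w_1|^2-(-2\mathrm{Re}(g_{13}))(-2\mathrm{Re}(g_{24}\bar g_{14}))=\det G,
\end{equation*}
one sees that $|w_2|^2\leq -2\mathrm{Re}(g_{24}\bar g_{14})$ holds precisely when $\det G\leq 0$. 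In that case one sets $|w_2'|^2:=-2\mathrm{Re}(g_{24}\bar g_{14})-|w_2|^2\geq 0$ and adjusts the argument of $w_2$ as in Proposition 3.1 to make the third equation an actual equality of complex numbers; the degenerate sub-cases $z_2=0$ or $w_2=0$ are dealt with separately exactly as before. When $\det G=0$ one automatically has $w_2'=0$, so the construction lives in $\mathbb{C}^{2,1}$ and coincides with Proposition 3.1; in particular $p\subset\partial\ch{2}$.

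The principal obstacle is bookkeeping the signature argument for the necessity of $\det G\leq 0$ under the weak non-degeneracy given (zero diagonal, $|g_{13}|=1$, $g_{14},g_{24}\neq 0$), and making precise the claim that $\det G=0$ forces the span of the lifts to have signature exactly $(2,1)$ so that the projectivization is a genuine copy of $\partial\ch{2}$ rather than a degenerate locus. Once that is set up, the sufficiency direction is a direct adaptation of Proposition 3.1, with the single new ingredient being the extra coordinate $w_2'$ that converts the equality $\det G=0$ of Proposition 3.1 into the inequality $\det G\leq 0$.
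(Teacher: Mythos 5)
Your proposal is correct and follows essentially the same route as the paper: necessity via the signature/Sylvester count, and sufficiency by constructing explicit isotropic lifts with $P_1,P_2$ standard and reducing to the system whose sole compatibility condition is the identity $\mod{1-z_1\bar w_{n+1}-\bar w_1}^2-(-2{\rm Re}(g_{13}))(-2{\rm Re}(g_{24}\bar g_{14}))=\det G$. The only difference is cosmetic: where you introduce an explicit slack coordinate $w_2'$ with $\mod{w_2'}^2=-\det G/\mod{z_2}^2$, the paper keeps $z,w\in\mathbb C^{n-1}$ general and absorbs the slack by choosing the angle between the complex lines $\mathbb C z$ and $\mathbb C w$ via the Cauchy--Schwarz inequality, the two devices being equivalent (and your version makes the case $\det G=0\Rightarrow w_2'=0\Rightarrow p\subset\partial\ch{2}$ slightly more visible).
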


\begin{proof} Let us assume that $G$ is the normalized Gram matrix associated
to an ordered quadruple $p=(p_1,p_2,p_3,p_4)$ of distinct points of
$\partial\ch{n}.$ Then it follows from Proposition 2.4 that
$$ \  \ \ \det G(1,2,3)=2 {\rm Re}(\bar{g}_{13}),\ \ \ \ \   \
\ \det G(1,2,4)=2{\rm Re} (g_{24} \bar{g}_{14}).$$ Since the
Hermitian form in the definition of complex hyperbolic $n$-space has
signature $(n,1)$, it follows from Sylvester's Criterion that these
determinants are negative or vanish. Moreover, this also implies
that $\det G \leq 0$. If $p$ is in the boundary of a complex
hyperbolic $2$-space, then $\det G=0$, since in this case any
vectors $P_1,P_2,P_3,P_4$ representing $p_1,p_2,p_3,p_4$ are
linearly dependent.

Now let $G=(g_{ij})$ be an Hermitian $4\times 4$-matrix such that
$g_{ii}=0$, $g_{12}=g_{23}=g_{34}=1$, $\mod {g_{13}}=1,$ ${\rm
Re}{\mit (g}_{13})\leq 0$, ${\rm Re} (g_{24} \bar{g}_{14})\leq 0$,
$\det G \leq 0$. We are going to show that there exist four null
(isotropic) vectors $P_1,P_2,P_3,P_4,$ $P_i \in \mathbb {C}^{n,1},$
whose Gram matrix is equal to $G$.

\vspace{2mm}

We will look for these vectors in the following form:

$$P_1 \ = \ \begin{bmatrix} 0 \\ 0 \\ \vdots \\  1 \end{bmatrix},
\ \ \ \  P_2 \ = \ \begin{bmatrix} 1 \\ 0 \\ \vdots \\ 0
\end{bmatrix}, \ \ \ P_3 \ = \ \begin{bmatrix} z_1 \\ \vdots \\ z_n \\ 1
\end{bmatrix}, \ \ \ P_4 \ = \   \begin{bmatrix} w_1 \\ \vdots \\ w_n \\
w_{n+1}
\end{bmatrix},
$$ where $z_i$, $w_i$ are complex numbers,  and $\mod{z_1}=1.$

\vspace{2mm}

Then we have
 $$\herm{P_1,P_2}=1, \ \herm{P_1,P_3}=\bar{z}_{1}, \
\herm{P_1,P_4}=\bar{w}_{1}, \ \herm{P_2,P_3}=1, \
\herm{P_2,P_4}=\bar{w}_{n+1}, $$
$$ \herm{P_3,P_4}=z_1 \bar{w}_{n+1} \
+ z_2 \bar{w}_2 \  + \cdots +  \ z_n \bar{w}_n  \ +\bar{w}_1 \ = \
z_1 \bar{w}_{n+1} \ + \bar{w}_1 \ + \hherm{z,w},
$$
where $z = (z_2, \ldots ,z_n),$  $w=(w_2, \ldots , w_n)$, and
$\hherm{z,w}$ is the standard Hermitian product on $\mathbb
C^{n-1}.$

\vspace{2mm}

Since we need $P_3$ and $P_4$ to be null vectors, we have the
following equations:
$$ z_1 + \mod{z}^2 + \bar{z}_1 =0, \ \ \ \ \ \ \ w_1 \bar{w}_{n+1} \
+ \mod{w}^2 + \bar{w}_1 w_{n+1} =0,
$$
where $\mod{z}^2=\hherm{z,z},$ and $\mod{w}^2=\hherm{w,w}.$

\vspace{2mm}

From the definition of the Gram matrix, we have
$$
g_{12}=1,\ \  g_{13}=\bar{z}_1 ,\ \  g_{14}=\bar{w}_1 , \ \
g_{23}=1, \ \  g_{24}= \bar{w}_{n+1} ,
$$
and
$$
g_{34}= z_1 \bar{w}_{n+1} +  \hherm{z,w} + \bar{w}_1 = 1.
$$
This implies that we have already found $z_1 , w_1, w_{n+1} $ in
terms of $g_{ij}$. Therefore, we need to find the vectors $z$ and
$w$.

\vspace{2mm}

We consider the following system of equations
$$ (1)\ \ z_1 + \mod{z}^2 + \bar{z}_1 =0, \ \ \ \ \
(2) \  \  w_1 \bar{w}_{n+1} + \mod{w}^2 + \bar{w}_1 w_{n+1} =0, \ \
\ \ \ (3) \ \  z_1 \bar{w}_{n+1} + \hherm{z,w} + \bar{w}_1 = 1,
$$ and show that it has a solution under the conditions of the
proposition. We write the first two equations in the following form:
$$\mod{z}^2 = -2 {\rm Re} ({z}_1) = -2{\rm Re}(g_{13}), \ \ \ \ \
\mod{w}^2 = -2{\rm Re}(w_1 \bar{w}_{n+1}) = -2{\rm Re}(g_{24}
\bar{g}_{14}).$$ We immediately see that there exist solutions for
$\mod{z}$ and $\mod{w}$ under our conditions. The third equation can
be written as
$$  \hherm{z,w} = 1 \ - \  z_1 \bar{w}_{n+1} \  -  \ \bar{w}_1.$$

\vspace{2mm}

\noindent Let us write the Cauchy-Schwarz inequality for the vectors
$z$ and $w$:
$$\mod {\hherm{z,w}}^2 \leq \mod{z}^2 \ \mod{w}^2.$$
Substituting $\mod{z}$ and $\mod{w}$ from equations (1) and (2) and
$\hherm{z,w}$ from the third equation, we rewrite this inequality in
the following form:
$$\mod {1 \ - \  z_1 \bar{w}_{n+1} \  -  \ \bar{w}_1}^2  -(z_1 \ + \ \bar{z}_1)
\ (w_1 \bar{w}_{n+1} \ +  \ \bar{w}_1 w_{n+1})\leq 0.$$
By computations similar to those in Proposition 3.1, we have
 $$\mod {1 \ - \
z_1 \bar{w}_{n+1} \  -  \ \bar{w}_1}^2  -(z_1 \ + \ \bar{z}_1) \
(w_1 \bar{w}_{n+1} \ +  \ \bar{w}_1 w_{n+1}) = \ \det G.$$ This
implies that if there exist solutions $(z,w)$ to the equation
$$\mod { \hherm{z,w}} = \mod {1 \ - \  z_1 \bar{w}_{n+1} \  -  \ \bar{w}_1}$$
that satisfy equations $(1)$ and $(2)$ then necessarily $\det G \leq
0.$ Let us assume now that $\det G \leq 0.$ Then there exist vectors
$z$ and $w$ satisfying equations $(1)$ and $(2)$ such that the
inequality
$$\mod{1 \ - \  z_1 \bar{w}_{n+1} \  -  \ \bar{w}_1}^2 \  \leq\  \mod{z}^2
\ \mod{w}^2
$$
holds. One verifies that if one of the vectors, $z$ or $w$, is the
null vector, then the system above has a solution because the third
equation is satisfied automatically in this case provided that $\det
G \leq 0$. So, we may suppose that both vectors $z$ and $w$ are not
null. Let $\mathbb{C} z $ and $\mathbb{C} w$ be the complex lines in
the underlying real vector space of $\mathbb C^{n-1}$ spanned by $z$
and $w$, and let $\angle(\mathbb{C} z,\mathbb{C} w)$ be the angle
between $\mathbb{C} z $ and $\mathbb{C} w$, see \cite{Go}. Then it
follows from the formula
$$ \mod{\hherm{z,w}} = \mod{z} \mod{w} \cos (\angle(\mathbb{C} z,\mathbb{C}
w))$$ proved in Lemma 2.2.2, \cite{Go}, that by choosing an
appropriate angle between $\mathbb{C} z $ and $\mathbb{C} w$
(without changing the norms of $z$ and $w$), we may assume that $z$
and $w$ satisfy the equality $\mod { \hherm{z,w}} = \mod {1 -  z_1
\bar{w}_{n+1} - \bar{w}_1}.$  Let $z'= e^{i \theta}z$. Then
$\hherm{z',w} = \ e^{i \theta} \ \hherm{z,w}.$ This implies that
there exists $\theta$ such that $(z',w)$ is a solution to the
equation $ \hherm{z,w} = 1 - z_1 \bar{w}_{n+1}  - \bar{w}_1$ (here
we have used the fact that if two complex numbers have the same norm
then there exists a rotation which sends one number to another).
Finally, it is easy to see that if $g_{14} \neq 0,$ $g_{24} \neq 0,$
then the points $p_i$ defined by the vectors $P_i$ are distinct.
This proves the statement of the proposition.
\end{proof}



\bigskip

$\begin{array}{ll}
\textrm{\bf E-mail\ addresses:}  &  \textrm{cunha@mat.ufmg.br} \\
                                 &  \textrm{nikolay@mat.ufmg.br}
\end{array}$

\end{document}